\documentclass{article}
\usepackage{amsmath, amsthm, amsfonts, amssymb}
\usepackage{mathpartir}
\usepackage{tikz}
\usepackage{tikz-cd}
\usepackage{rotating}

\message{<Paul Taylor's Proof Trees, 2 August 1996>}

\newdimen\proofrulebreadth \proofrulebreadth=.05em
\newdimen\proofdotseparation \proofdotseparation=1.25ex
\newdimen\proofrulebaseline \proofrulebaseline=2ex
\newcount\proofdotnumber \proofdotnumber=3
\let\then\relax
\def\hfi{\hskip0pt plus.0001fil}
\mathchardef\squigto="3A3B
%
\newif\ifinsideprooftree\insideprooftreefalse
\newif\ifonleftofproofrule\onleftofproofrulefalse
\newif\ifproofdots\proofdotsfalse
\newif\ifdoubleproof\doubleprooffalse
\let\wereinproofbit\relax
%
\newdimen\shortenproofleft
\newdimen\shortenproofright
\newdimen\proofbelowshift
\newbox\proofabove
\newbox\proofbelow
\newbox\proofrulename
%
\def\shiftproofbelow{\let\next\relax\afterassignment\setshiftproofbelow\dimen0 }
\def\shiftproofbelowneg{\def\next{\multiply\dimen0 by-1 }%
\afterassignment\setshiftproofbelow\dimen0 }
\def\setshiftproofbelow{\next\proofbelowshift=\dimen0 }
\def\setproofrulebreadth{\proofrulebreadth}

\def\prooftree{
%
\ifnum  \lastpenalty=1
\then   \unpenalty
\else   \onleftofproofrulefalse
\fi
%
\ifonleftofproofrule
\else   \ifinsideprooftree
        \then   \hskip.5em plus1fil
        \fi
\fi
%
\bgroup
\setbox\proofbelow=\hbox{}\setbox\proofrulename=\hbox{}%
\let\justifies\proofover\let\leadsto\proofoverdots\let\Justifies\proofoverdbl
\let\using\proofusing\let\[\prooftree
\ifinsideprooftree\let\]\endprooftree\fi
\proofdotsfalse\doubleprooffalse
\let\thickness\setproofrulebreadth
\let\shiftright\shiftproofbelow \let\shift\shiftproofbelow
\let\shiftleft\shiftproofbelowneg
\let\ifwasinsideprooftree\ifinsideprooftree
\insideprooftreetrue
%
\setbox\proofabove=\hbox\bgroup$\displaystyle 
\let\wereinproofbit\prooftree
%
\shortenproofleft=0pt \shortenproofright=0pt \proofbelowshift=0pt
%
\onleftofproofruletrue\penalty1
}

\def\eproofbit{
%
\ifx    \wereinproofbit\prooftree
\then   \ifcase \lastpenalty
        \then   \shortenproofright=0pt  
        \or     \unpenalty\hfil         
        \or     \unpenalty\unskip       
        \else   \shortenproofright=0pt  
        \fi
\fi
%
\global\dimen0=\shortenproofleft
\global\dimen1=\shortenproofright
\global\dimen2=\proofrulebreadth
\global\dimen3=\proofbelowshift
\global\dimen4=\proofdotseparation
\global\count255=\proofdotnumber
%
$\egroup  
%
\shortenproofleft=\dimen0
\shortenproofright=\dimen1
\proofrulebreadth=\dimen2
\proofbelowshift=\dimen3
\proofdotseparation=\dimen4
\proofdotnumber=\count255
}

\def\proofover{
\eproofbit 
\setbox\proofbelow=\hbox\bgroup 
\let\wereinproofbit\proofover
$\displaystyle
}%
%
\def\proofoverdbl{
\eproofbit 
\doubleprooftrue
\setbox\proofbelow=\hbox\bgroup 
\let\wereinproofbit\proofoverdbl
$\displaystyle
}%
%
\def\proofoverdots{
\eproofbit 
\proofdotstrue
\setbox\proofbelow=\hbox\bgroup 
\let\wereinproofbit\proofoverdots
$\displaystyle
}%
%
\def\proofusing{
\eproofbit 
\setbox\proofrulename=\hbox\bgroup 
\let\wereinproofbit\proofusing
\kern0.3em$
}

\def\endprooftree{
\eproofbit 
  \dimen5 =0pt
%
\dimen0=\wd\proofabove \advance\dimen0-\shortenproofleft
\advance\dimen0-\shortenproofright
%
\dimen1=.5\dimen0 \advance\dimen1-.5\wd\proofbelow
\dimen4=\dimen1
\advance\dimen1\proofbelowshift \advance\dimen4-\proofbelowshift
%
\ifdim  \dimen1<0pt
\then   \advance\shortenproofleft\dimen1
        \advance\dimen0-\dimen1
        \dimen1=0pt
        \ifdim  \shortenproofleft<0pt
        \then   \setbox\proofabove=\hbox{%
                        \kern-\shortenproofleft\unhbox\proofabove}%
                \shortenproofleft=0pt
        \fi
\fi
%
\ifdim  \dimen4<0pt
\then   \advance\shortenproofright\dimen4
        \advance\dimen0-\dimen4
        \dimen4=0pt
\fi
%
\ifdim  \shortenproofright<\wd\proofrulename
\then   \shortenproofright=\wd\proofrulename
\fi
%
\dimen2=\shortenproofleft \advance\dimen2 by\dimen1
\dimen3=\shortenproofright\advance\dimen3 by\dimen4
%
\ifproofdots
\then
        \dimen6=\shortenproofleft \advance\dimen6 .5\dimen0
        \setbox1=\vbox to\proofdotseparation{\vss\hbox{$\cdot$}\vss}%
        \setbox0=\hbox{%
                \advance\dimen6-.5\wd1
                \kern\dimen6
                $\vcenter to\proofdotnumber\proofdotseparation
                        {\leaders\box1\vfill}$%
                \unhbox\proofrulename}%
\else   \dimen6=\fontdimen22\the\textfont2 
        \dimen7=\dimen6
        \advance\dimen6by.5\proofrulebreadth
        \advance\dimen7by-.5\proofrulebreadth
        \setbox0=\hbox{%
                \kern\shortenproofleft
                \ifdoubleproof
                \then   \hbox to\dimen0{%
                        $\mathsurround0pt\mathord=\mkern-6mu%
                        \cleaders\hbox{$\mkern-2mu=\mkern-2mu$}\hfill
                        \mkern-6mu\mathord=$}%
                \else   \vrule height\dimen6 depth-\dimen7 width\dimen0
                \fi
                \unhbox\proofrulename}%
        \ht0=\dimen6 \dp0=-\dimen7
\fi
%
\let\doll\relax
\ifwasinsideprooftree
\then   \let\VBOX\vbox
\else   \ifmmode\else$\let\doll=$\fi
        \let\VBOX\vcenter
\fi
\VBOX   {\baselineskip\proofrulebaseline \lineskip.2ex
        \expandafter\lineskiplimit\ifproofdots0ex\else-0.6ex\fi
        \hbox   spread\dimen5   {\hfi\unhbox\proofabove\hfi}%
        \hbox{\box0}%
        \hbox   {\kern\dimen2 \box\proofbelow}}\doll%
%
\global\dimen2=\dimen2
\global\dimen3=\dimen3
\egroup 
\ifonleftofproofrule
\then   \shortenproofleft=\dimen2
\fi
\shortenproofright=\dimen3
%
\onleftofproofrulefalse
\ifinsideprooftree
\then   \hskip.5em plus 1fil \penalty2
\fi
}

\newtheorem{thm}{Theorem}[section]
\newtheorem{prop}[thm]{Proposition}
\newtheorem{lem}[thm]{Lemma}
\newtheorem{cor}[thm]{Corollary}
\newtheorem{defn}[thm]{Definition}

\theoremstyle{remark}

\newcommand{\bear}{\begin{eqnarray*}}
\newcommand{\eear}{\end{eqnarray*}}
\newcommand{\bea}{\begin{eqnarray}}
\newcommand{\eea}{\end{eqnarray}}
\def\nn{\nonumber \\}
\def\Cat#1{\mathsf{#1}}
\def\Cal#1{\mathcal{#1}}
\def\calO{\mathcal{O}}
\def\modeledby{=\joinrel\mathrel|}

\def\RR{\mathbb{R}}
\def\TT{\mathbb{T}}

\def\la{\langle}
\def\ra{\rangle}
\newcommand{\T}{T_\ast}
\newcommand{\TS}{T^\ast}
\newcommand{\TCS}{T_\#}
\newcommand{\FS}{\Theta^\ast}
\newcommand{\F}{\Theta_\#}
\newcommand{\RRD}{\RR_\#}
\newcommand{\G}{\Sigma_\ast}

\def\calA{\mathcal{A}}
\def\calN{\mathcal{N}}   
\def\rmd{\mathrm{d}}
\newcommand{\Yoneda}[1]{\widehat{#1}}
\def\Lin{L}
\def\Cc{C}

\DeclareMathOperator{\bfD}{\mathbf{D}}
\def\bfDk{\bfD\mkern-2mu}

\def\Id{I\!d}
\newcommand{\dist}{d}

\renewcommand{\to}{\rightarrow}

\newcommand{\tto}[1]{\xrightarrow{#1}}
\newcommand{\oot}[1]{\xleftarrow{#1}}
\newcommand{\mono}{\rightarrowtail}
\newcommand{\epi}{\twoheadrightarrow}

\newcommand{\inclusion}{\hookrightarrow}

\mathcode`\<="4268 
\mathcode`\>="5269 
\mathchardef\gt="313E 
\mathchardef\lt="313C 

\newcommand{\AAA}{{\cal A}}

\newcommand{\EEE}{{\cal E}}

\newcommand{\OOO}{{\cal O}}

\newcommand{\SSS}{{\cal S}}
\newcommand{\TTT}{{\cal T}}
\newcommand{\UUU}{{\cal U}}

\newcommand{\Man}{\Cat{Man}}


\title{Smooth coalgebra:\\ testing vector analysis}
\author{Dusko Pavlovic\\
        University of Hawaii\\
        \texttt{dusko@hawaii.edu}
\and
Bertfried Fauser\\
University of Konstanz\\
        \texttt{Bertfried.Fauser@uni-konstanz.de}
}
\date{}
\begin{document}
\maketitle
\begin{abstract}
Processes are often viewed as coalgebras, with the structure maps
specifying the state transitions. In the simplest case, the state spaces
are discrete, and the structure map simply takes each state to the next
states. But the coalgebraic view is also quite effective for studying
processes over structured state spaces, e.g. measurable, or continuous.
In the present paper we consider coalgebras over manifolds. This means
that the captured processes evolve over state spaces that are not just
continuous, but also locally homeomorphic to normed vector spaces, and thus
carry a differential structure. Both dynamical systems and differential
forms arise as coalgebras over such state spaces, for two different
endofunctors over manifolds. A duality induced by these two endofunctors
provides a formal underpinning for the informal geometric intuitions
linking differential forms and dynamical systems in the various
practical applications, e.g. in physics. This joint functorial
reconstruction of tangent bundles and cotangent bundles uncovers the
universal properties and a high level view of these fundamental
structures, which are implemented rather intricately in their standard
form. The succinct coalgebraic presentation provides unexpected
insights even about the situations as familiar as Newton's laws.
\end{abstract}

\section{Introduction}\label{sec:Introduction}
\subsubsection*{Idea: \emph{'Gedankenexperiments'}\/ with algebras and coalgebas}
The coalgebraic view of processes is based on capturing the state
transitions by the coalgebra structure maps. Since the coalgebra
homomorphisms then preserve and reflect the state transitions, they
capture the observable behaviors, and the elements of the final
coalgebras correspond to the equivalence classes of indistinguishable
behaviors. The main utility of the coalgebraic theory of processes has
been drawn from this correspondence \cite{RuttenJ:Universal}.

However, what is observable, and which behaviors can be distinguished,
is not always determined just by the transition systems, i.e. by the
coalgebraic specifications alone, like it is in the special case of
concurrent processes modulo strong bisimulation~\cite{AczelP:NWF,PavlovicD:CCPS1}.
Both in system design and in theoretical analyses, behaviors are usually
specified in various \emph{testing}\/ frameworks, along the lines of
E.F.~Moore's seminal \emph{'Gedankenexperiments'}\/ paper~\cite{Moore:Gedanken}.
E.g., in the theory of computation, a process can be specified as a
Turing machine, or as a finite state automaton, and such specifications
can be naturally presented as coalgebras. But to specify how this
process processes data, i.e. how does a state machine compute, and what
does it mean that it accepts a language, we must also say how it
interacts with the words representing data. And the words are
elements of algebras. The interactions between machines and words are
Moore's testing correlations. A categorical version of testing
correlations, where machines are presented as coalgebras, and tests
as the elements of algebras, was studied in~\cite{Pavlovic:Mislove:Worrell:2006a}.
Here we lift the same framework to dynamical systems presented as
coalgebras, and paths as the elements of algebras.

\subsubsection*{Background: Semantic connections of algebras and coalgebras}
It is essential to note that, in the testing approach to specifying
behaviors, both coalgebras and algebras are parameters of the
description, and neither side determines the other. E.g., the whole
linear time/branching time spectrum of essentially different computational
behaviors~\cite{vanGlabbeek-I,vanGlabbeek-II} can be described by
fixing the family of coalgebras, and varying the test algebras. On the
other hand, the Chomsky hierarchy of languages, as well as the hierarchy
of complexity classes, can be described by fixing the family of tests,
and varying the state machines, \emph{viz}\/ the corresponding
coalgebras. Both approaches were aligned
in~\cite{Pavlovic:Mislove:Worrell:2006a}\footnote{The latter approach
probably deserves a more detailed explanation, especially since its
details had to be moved to the Appendix, and the Appendix got removed
from the published version of the paper. The full submitted version
remained available online.}, generalizing the earlier application
in~\cite{PavlovicD:SSAS}. Such \emph{loose}\/ semantical connections
of algebras and coalgebras, as two independent dimensions of semantical
descriptions, distinguish the testing frameworks used in the present
paper, and previously applied
in~\cite{PavlovicD:SSAS,PavlovicD:ASE01,PavlovicD:AMAST02,Pavlovic:Mislove:Worrell:2006a,PavlovicD:AMAST08,PavlovicD:MPC10},
from the \emph{tight}\/ semantical connections of algebras and
coalgebras, as arising on the two sides of a duality, and used in
algebraic semantics of coalgebraic
logic~\cite{Kupke:Kurz:Pattinson:04,Kupke:Kurz:Pattinson:05,Kurz:Rosicky:12}. 
The idea of testing is echoed more closely in the testing approach
to the equivalence of concurrent processes~\cite{Nicola:Hennessy:84,DeNicola:2012},
and the two dimensional approach is implemented in terms of algebras and
coalgebras in the categorical approach to Structured Operational Semantics,
which was developed as an extension of Denotational Semantics of
monads~\cite{Plotkin:Turi,KlinB:07,KlinB:09}. However, while
algebras and coalgebras do not completely constrain each other, and varying the algebras allows capturing the linear time/branching time spectrum, they were specified within the same category, with their semantical connection hardwired through distributivity requirements precluding, e.g., capturing the language hierarchies. On the other hand, semantic connections of algebras and coalgebras were studied in a wide variety of frameworks, and by a wide variety of techniques \cite[to mention just a few]{KlinB:beyond,Jacobs-Sokolova-exemplaric,Kapulkin}, and it is possible that the language hierarchies, language acceptance, and computability and complexity concepts could have been captured in that framework if the community moved in that direction.
The language acceptance relation and the trace equivalences have in fact been captured in~\cite{Hasuo-Jacobs-Sokolova}, but by combining coalgebras and monads, which seem to indirectly capture the underlying semantical connection. E.g., capturing the linear time -- branching time spectrum in that framework seems quite a bit more involved than through testing. Last but not
least, the general paradigm of testing, explicated by E.F.~Moore, has been widely used before and independently of his work, not only in concurrency theory~\cite{DeNicola:2012}, but also, e.g., in functional analysis, as the foundation of theory of distributions~\cite{Schwarz:1950a}. In fact, the idea to apply testing in the context of manifolds, that we propose in the present paper, could have just as easily emerged from the theory of
distributions as from the theory of coalgebras. A clearly
coalgebraic view of differential geometry can be traced back to the
1970s work of Modungo and Stefani on the second tangent bundle and
its mixed versions~\cite{Modungo:Stefani:1978a}. The naturality of
the bundle structures was explicated in~\cite{Kolar:Michor:Slovak:1993a}.
A coalgebraic view of tangent bundles was proposed
in~\cite{Haghiverdi-Pappas}, leading to the idea that dynamical
systems could be viewed modulo the bisimulations relations. This however
may suffer from too strong assumptions about a comonad structure on the
tangent functor, see comments below.

A different categorical approach to tangent bundles, framed in the
context of Synthetic  Differential Geometry (SDG)~\cite{KockA:SDG},
goes back to Lawvere~\cite{LawvereFW:smooth} and Rosick\'y~\cite{Rosicky:1984a}.
The idea of SDG is to derive the important constructions of
differential geometry from abstract categorical axioms. The analytic
framework in which differential geometry is usually done is just
one of the models of SDG. In order to open an alley towards
differential geometry in nonstandard categorical model categories,
SDG attempts to extract and axiomatize what is essential for
differential geometry, and to abstract away the inessential
implementation details. This is, of course, a clean and exciting
approach, but it involves researchers' design decisions what is
essential in their theory, and what not. Many scientific
discoveries, however, happen to emerge from the parts of scientific
theories which have been previously thought to be inessential. This
keeps scientific practice from being superseded by axiomatic
theories. Nevertheless, axiomatic theories continue to play their
role as well, and categorical axiomatics of differential geometry
have in the meantime grown into a rich, active, and useful research
area~\cite{Blute:Cockett:Seely:differential,Blute:Cockett:Seely:cart-differential,Cockett:Seely:2011a,Cockett:Cruttwell:2012a}. 

Lastly, we also mention the work on entwining structures, which
has led our intuitions at various points, although its technical links
with this work are less clear. Entwining structures are widely used
in mathematical physics to study modules of quantum
deformations~\cite{Caenpeel:Militaru:Zhu:2002a}. A categorical
description can be found in the papers by Mesablishvili and
Wisbauer~\cite{mesablishvili:wisbauer:2010a,mesablishvili:wisbauer:2011a}.
Moerdijk's definition of a Hopf monad in~\cite{moerdijk:2002a} seems
to have been an important technical step in these analyses, with
possible repercussions on our own work that remain to be explored.

\subsubsection*{Outline of the paper}
In Section~\ref{sec:generaltesting} we provide a general overview of
testing correlations. In Section~\ref{sec:manifolds}, we summarize the
basic ideas about manifolds and their tangents. In
Section~\ref{sec:tangenttesting}, we apply testing correlations in
the context of manifolds to provide the semantic reconstructions of the
of the tangent bundle functors and of the contangent bundle functors,
spelled out in Theorem~\ref{thm-main}. The coalgebras for these endofunctors
are the usual \emph{cross sections} of the bundle projections, and
thus respectively correspond to vector fields (or dynamical systems)
and to differential forms. The testing correlations over manifolds thus
provides a categorical view of the practice of integration of differential
systems over differential forms. In Section~\ref{sec:Newton},
we spell out Newton's Second Law as an example of the
coalgebraic treatment. Interestingly, the structural duality of the
tangent and the cotangent bundles, displayed in the categorical
treatment, immediately points beyond Newton, and into relativity theory.
An overview of the standard definitions from vector analysis is provided in
Appendix~\ref{app:Banach}. 

\section{General testing framework}\label{sec:generaltesting}
We begin by reviewing the testing framework from~\cite{Pavlovic:Mislove:Worrell:2006a}. 

\subsection{Idea} \label{subsec:testingRel}
Given a family of systems $\Sigma$, a family of tests $\Theta$, and a
type $\Omega$ of observations, we call a map
\begin{align}\label{correlation}
	\begin{tikzcd}[ampersand replacement=\&]
		\Sigma \times \Theta\arrow{r}{\TT} \& \Omega
	\end{tikzcd}
\end{align}
a \emph{testing correlation}, or just \emph{testing}. The observation
$\TT(S,t)$ is often written in the infix form $S\models t$. The
observations can be boolean, like `true'/`false', or `pass'/`fail';
but they can also be quantities obtained from a measurement, e.g. in
the interval $[0,1]$, or on the real line $\RR$. Each test is assumed
to yield a single observation. In the simplest case, we may use testing
to distinguish a given system $S\in\Sigma$ from a reference system
$R\in \Sigma$. The two systems are observably different if there is a
test $b\in \Theta$ such that the observation $R\models b$ is different
from the observation $S\models b$. Otherwise, if the two systems induce
the same observations for all tests, then they are
\emph{observationally indistinguishable}, and we write 
\begin{align*}
S \sim R &\Longleftrightarrow \forall t\in\Theta.(S\models t) = (R\models t)
\end{align*}
Developing this idea in \cite{Moore:Gedanken}, E.F.~Moore suggested that
minimal representations of automata can be built over the equivalence
classes of their observationally indistinguishable states. This idea
was elaborated categorically in~\cite{Pavlovic:Mislove:Worrell:2006a},
by identifying each equivalence class of systems that are observationally
indistinguishable from $S\in \Sigma$ with the map $S\models(-): \Theta \to \Omega$.{}
Such maps can be thought of as the \emph{observable behaviors}\/ of
systems. The family $L$ of observable behaviors of systems from
$\Sigma$ can thus be obtained as the image in $\Omega^\Theta$ of the
mapping that sends each system $S$ to the induced function
$S\models(-)$ from tests to observations:
\begin{align}\label{factor}
	\begin{tikzcd}[ampersand replacement=\&,column sep=small]
		\Sigma \arrow{rr}{\models}
		       \arrow[two heads]{rd} 
		\& \& \Omega^{\Theta} \\
		\& 
		L \arrow[tail]{ur} 
		\&
	\end{tikzcd}
\end{align} 
The main feature of this representation is that the elements of $L$,
with a suitable coalgebraic structure, can be used to build the
canonical minimal representatives of the behaviors of the systems in
$\Sigma$, in so far as they are observable under testing by the tests
from $\Theta$. Now we outline the construction of this coalgebraic
structure on $L$, as presented in~\cite{Pavlovic:Mislove:Worrell:2006a}.

\subsection{Semantic connections} 
Let $\Cal{S}$ be a category of `spaces' and $\Cal{T}$ be a category
of `types' or `theories'. A \emph{semantic connection}\/ between
$\Cal{S}$ and $\Cal{T}$ is simply a contravariant adjunction
$M^{op} \dashv P : \Cal{S}^{op}\rightarrow \Cal{T}$. The idea is that
a space $X\in \Cal{S}$ is mapped to the type $PX$ of \emph{predicates}
over it, and that a theory $A\in \Cal{T}$ is mapped to the space
$MA$ of its models. 

The examples abound. Stone duality motivates the logical view:
$\Cal{T}$ is the category of Boolean algebras, $\Cal{S}$ is the category
of Stone spaces, $M$ maps each Boolean algebra to its space of
ultrafilters, which can be viewed as Boolean homomorphisms to $2$,
whereas $P$ sends each Stone space to its Boolean algebra of open sets,
which can be viewed as continuous maps to $2$. If we restrict
$\Cal{S}$ from Stone spaces to the category of sets $\sf Set$,
viewed as topological spaces where every subset is open, then
$\Cal{T}$ restricts to the category of complete atomic Boolean
algebras, which is equivalent to the opposite category of sets, which
yields the self-adjunction of the powerset
$\wp^{op} \dashv \wp : {\sf Set}^{op} \to {\sf Set}$ as another
example of a semantic connection. For a different example, let
$\Cal{S}$ be the lattice $\wp \RR^n$ of sets of $n$-tuples of real
numbers, $\Cal{T}$ the lattice $\wp \RR[x_1,\ldots x_n]$ of sets of
real polynomials in $n$ variables, and let $M$ map each set of
polynomials to the $n$-tuples where they are all zero, whereas
$P$ should map each set of $n$-tuples to all polynomials that
vanish over it. When restricted to the fixed points of $MP$
and of $PM$, this semantic connection yields the duality of Zariski
closed subsets of $\RR^n$ and of radical ideals in $\RR[x_1,\ldots x_n]$.

When they have enough limits and colimits, the two sides of a semantic
connection share a \emph{dualizing object} $\Omega$, which lives in
both categories as the type $P1$ and the space $M1$ respectively. We
can then construct the arrows
\begin{align*}
	&\inferrule*{\coprod_{\vert X\vert}1\rightarrow X}{%
	     PX \rightarrow P(\coprod_{\vert X\vert}1)
	        \stackrel{\sim}{\rightarrow} \prod_{\vert X\vert}P1}
	&&&
	 \inferrule*{\coprod_{\vert A\vert}1\rightarrow A}{%
	     MA \rightarrow M(\coprod_{\vert A\vert}1)
	        \stackrel{\sim}{\rightarrow} \prod_{\vert A\vert}M1}        
\end{align*}
where $\vert C\vert = \Cat{C}(1,C)$, and define the functors
$\Omega^{X}:=\prod_{\vert X\vert}P1$ and
$\Omega^{A}:=\prod_{\vert A\vert}M1$, to get the natural embeddings
\begin{align*}
  &\begin{tikzcd}[ampersand replacement=\&,column sep=small]
		PX \arrow[tail]{r} \& \Omega^{X} 
	\end{tikzcd}
&&&\begin{tikzcd}[ampersand replacement=\&,column sep=small]
		MA \arrow[tail]{r} \& \Omega^{A} 
	\end{tikzcd}		
\end{align*}
 
\subsection{Testing coalgebras by process algebras}\label{subsec:proclogtestalg}\label{subsec:repTesting}
Process logics are modal logics for describing the behavior of
computational processes~\cite{PrattV:floyd,Milner:CC,Harel-Kozen}. With
the modalities capturing the actions of a process, the formulas of
process logics can be constructed as tests: a process \emph{satisfies}\/
a formula if and only if it passes the test that the formula represents.
Viewed like this, semantics of process logics generalize the
language acceptance definitions, used in the theory of computation to
specify how automata and Turing machines compute. In the setting of
semantic connections, semantics of process logics, including the various
language acceptance definitions, can be captured as follows.
\begin{itemize}
\item Present processes as coalgebras in $\Cal{S}$ for some endofunctor $G:\Cal{S}\to \Cal{S}$.
\item Present process logics as algebras in $\Cal{T}$ for some endofunctor $F:\Cal{T}\to \Cal{T}$.
\item Specify the $F$-algebra semantics of $G$-coalgebras as a distributivity
      law $FP\xrightarrow\lambda PG$: it lifts the final
      $G$-coalgebra $X\xrightarrow\xi GX$ into an $F$-algebra
      $FPX \xrightarrow \lambda PGX\xrightarrow{P\xi} PX$.
\item The $F$-algebra testing of $G$-coalgebras is realized as the algebra
      homomorphism from the initial $F$-algebra
      $FA\xrightarrow \alpha A$ to $FPX \xrightarrow \lambda PGX\xrightarrow{P\xi} PX$ in $\Cal T$.
\item Transposed along the semantic connection, this algebra homomorphism
      induces in $\Cal S$ a quotient $L$ of $X\xrightarrow \xi GX$, the
      finest (initial) among all of its quotients that embeds into
      $MA \xrightarrow{M\alpha} MFA$. 
\item As an $MFP$-coalgebra, $L$ provides the canonical representatives
      of $F$-observable behaviors of $G$-processes.
\end{itemize}
This procedure is our formalization `in the small' of the idea of
Eqn.~\eqref{factor}. The coalgebra $X\xrightarrow \xi GX$ represents
the family of systems $\Sigma$, the algebra $FA\xrightarrow \alpha A$
represents the family of tests $\Theta$, and the elements of
the dualizing object $\Omega$ are the available observations. This
formalization was elaborated in~\cite{Pavlovic:Mislove:Worrell:2006a}.
We quote the main theorem from that paper, omitting the considerations
regarding the initiality of $FA\xrightarrow \alpha A$ and the
finality of $X\xrightarrow \xi GX$.

\begin{thm}\label{thm:representation}
  For the 'model/predicate' adjunction $M^{op}\dashv P : \Cal{S}^{op}\rightarrow \Cal{T}$
  and endofunctors $G : \Cal{S}\rightarrow \Cal{S}$ and
  $F : \Cal{T}\rightarrow \Cal{T}$ and a distributive law
  $\lambda : FP \rightarrow PG$ the following holds:
  \begin{itemize}
\item[(a)]
  The predicate functor $P : \Cal{S}^{op} \rightarrow \Cal{T}$
  lifts to $\hat{P} : (\Cal{S}_{G})^{op} \rightarrow {}_{F}\Cal{T}$
  \begin{align}
	  \inferrule*[Right=lift]{
	  \begin{tikzcd}[ampersand replacement=\&]
		  X \arrow{r}{\xi} \& GX
	  \end{tikzcd} 
	  }{
	  \begin{tikzcd}[ampersand replacement=\&]
		  \hat{P}\xi : FPX \arrow{r}{\lambda} \& 
		  PGX \arrow{r}{P\xi}\&
		  PX
	  \end{tikzcd}
	  }
  \end{align}
\item[(b)]
  $\hat{P}$ has in general no adjoint, but there is a correspondence
  \begin{align*}
	  \inferrule*{
	  \begin{tikzcd}[ampersand replacement=\&]
		  \alpha \arrow{r} \& \hat{P}\xi
	  \end{tikzcd} 
	  }{
	  \begin{tikzcd}[ampersand replacement=\&]
        \Lambda\xi \arrow{r} \& M\alpha
	  \end{tikzcd}
	  }
  \end{align*}
  where $\Lambda : \Cal{S}_{G} \rightarrow \Cal{S}_{MFP}$ is the functor
  mapping the coalgebra $\xi : X \rightarrow GX$ to
  $X \stackrel{\xi}{\rightarrow} GX \stackrel{\lambda'}{\rightarrow} MFPX$.
  $\lambda'$ is the twisted distributivity law.
  \begin{align}\label{eq:lifts}
	  &\begin{tikzcd}[ampersand replacement=\&, column sep=2cm]
        FA \arrow{dd}{\alpha}
           \arrow{r}{Ff}
          \&
        FPX \arrow{d}{\lambda}
            \arrow[bend left=50]{dd}{\hat{P}\xi}
         \\
          \&
        PGX \arrow{d}{P\xi}
         \\
        A \arrow{r}{f}
          \&
        PX      
	  \end{tikzcd}
	  &&&
	  \begin{tikzcd}[ampersand replacement=\&, column sep=2cm]
        MFPX \arrow{r}{MFf}
          \&
        MFA 
         \\
        GX \arrow[swap]{u}{\lambda'}
          \&
         \\
        X \arrow[swap]{u}{\xi}
          \arrow{r}{f'}
          \arrow[bend left=50]{uu}{\Lambda\xi}
          \&
        MA \arrow[swap]{uu}{M\alpha}     
	  \end{tikzcd}
  \end{align}
\item[(c)] If $\Cal{T}$ is a regular category, and $F : \Cal{T} \rightarrow \Cal{T}$
  preserves reflexive coequalizers, then ${}_{F}\Cal{T}$ is a regular category.
  Every $F$-algebra homomorphism $\alpha \stackrel{f}{\rightarrow} \hat{P}\xi${}
  has a regular epi-mono factorization.
\item[(d)] If $\Cal{S}^{op}$ is a regular category, and $MFP$ preserves
  weak pull backs, then every twisted coalgebra homomorphism
  $f' : \Lambda\xi\rightarrow M\alpha$ has a regular epi-mono factorization,
  which induces a coalgebra $\ell : L \rightarrow MFPL$ as the image
  of $\Lambda\xi$  
  \begin{align}\label{eq:epimono}
	  \begin{tikzcd}[ampersand replacement=\&, column sep=2cm]
        MFPX
           \arrow[two heads]{r}{MFP\,e}
          \&
        MFPL
          \arrow[tail]{r}{MF\,m'}  
          \&
        MFA  
         \\
        GX
           \arrow{u}{\lambda'}
          \& \&
          \\
        X  \arrow[two heads]{r}{e}
           \arrow{u}{\xi}
           \arrow[bend left=50]{uu}{\Lambda\xi}
          \&
        L \arrow[tail]{r}{m}
          \arrow[dashed]{uu}{\ell}
          \&
        MA
          \arrow[swap]{uu}{M\alpha}      
	  \end{tikzcd}
  \end{align}  
  \end{itemize}	
\end{thm}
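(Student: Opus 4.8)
The theorem bundles four claims: a lifting (a), a transpose correspondence (b), and two factorization results (c) and (d). The plan is to prove (a) and (b) by diagram chases over the semantic connection, once the twisted law $\lambda'$ has been pinned down, and then to obtain (c) and (d) as the algebraic and coalgebraic incarnations of one and the same image-factorization argument, carried out in ${}_F\Cal{T}$ and in $\Cal{S}_{MFP}$ respectively and linked through the contravariant adjunction.

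For (a) I would set $\hat P\xi = P\xi\circ\lambda_X\colon FPX\to PGX\to PX$, an $F$-algebra on $PX$. Given a $G$-coalgebra homomorphism $h\colon(X,\xi)\to(Y,\zeta)$, i.e.\ $Gh\circ\xi=\zeta\circ h$, a single chase shows that $Ph\colon PY\to PX$ is an $F$-algebra homomorphism $\hat P\zeta\to\hat P\xi$: naturality of $\lambda$ at $h$ rewrites $\hat P\xi\circ FPh$ as $P\xi\circ PGh\circ\lambda_Y$, contravariance of $P$ folds $P\xi\circ PGh$ into $P(Gh\circ\xi)=P(\zeta\circ h)=Ph\circ P\zeta$, and the result is $Ph\circ\hat P\zeta$. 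For (b) the twisted law is forced by the adjunction: writing $\epsilon\colon\mathrm{Id}_{\Cal{S}}\to MP$ and $\eta\colon\mathrm{Id}_{\Cal{T}}\to PM$ for the unit and counit of $M^{op}\dashv P$, I put $\lambda'=M\lambda\circ\epsilon G\colon G\to MFP$ and $\Lambda\xi=\lambda'_X\circ\xi$. The semantic connection gives the natural bijection $f\leftrightarrow f'$ with $f=Pf'\circ\eta_A$ and $f'=Mf\circ\epsilon_X$, and the content of (b) is that the algebra square $f\circ\alpha=\hat P\xi\circ Ff$ of \eqref{eq:lifts} transposes to the twisted square $MFf\circ\Lambda\xi=M\alpha\circ f'$. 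I expect this to be a routine mate calculation whose only hazard is keeping the variance of $M$ and $P$ straight; the identity that makes it close is $MFPf'\circ\Lambda\xi=\lambda'_{MA}\circ Gf'\circ\xi$, obtained from naturality of $\lambda$ and of $\epsilon$ together with the definition of $\lambda'$.

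For (c) I would invoke the standard machinery for functor-algebras. The forgetful functor ${}_F\Cal{T}\to\Cal{T}$ creates limits, so ${}_F\Cal{T}$ has finite limits because $\Cal{T}$ does. To factor an $F$-algebra homomorphism $f$, factor its underlying $\Cal{T}$-arrow as a regular epi $\bar e\colon A\to C$ followed by a mono $\bar m\colon C\to PX$; since $\bar e$ is the coequalizer of its kernel pair, which is reflexive, and $F$ preserves reflexive coequalizers, the algebra structure descends along $\bar e$, and the orthogonality between regular epis and monos makes $\bar m$ an algebra homomorphism. Pullback-stability of regular epis is inherited from $\Cal{T}$, so ${}_F\Cal{T}$ is regular and the homomorphism $\alpha\to\hat P\xi$ factors as required.

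Part (d) is the crux and mirrors (c) across the contravariant adjunction. Transporting the image factorization of $f$, its regular-epi part $\bar e\colon A\to C$ in ${}_F\Cal{T}$ is carried by the colimit-preserving left adjoint $M^{op}$ to a coequalizer in $\Cal{S}^{op}$, hence to a regular mono $m\colon L\to MA$ in $\Cal{S}$ (with $L\cong MC$) through which $f'$ factors; since $\Cal{S}^{op}$ is regular this is the mono part of the factorization $X\xrightarrow{e}L\xrightarrow{m}MA$ of $f'$. The remaining task, and the genuine obstacle, is to equip $L$ with the coalgebra structure $\ell\colon L\to MFPL$ of \eqref{eq:epimono}, making $e$ an honest $MFP$-coalgebra homomorphism and $m$ a twisted one. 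Here $MFP$ preserving weak pullbacks is indispensable: it forces $MFP$ to preserve monos, so that $MFPm$ is monic and may be cancelled after reducing along $MFPm\circ MFPe=MFPf'$, and, more to the point, it is exactly the condition under which homomorphic images of $MFP$-coalgebras are sub-coalgebras, which in the present twisted form supplies the unique $\ell$ descending $\Lambda\xi$ along $e$. I expect this descent --- showing that the image is a sub-coalgebra and not merely a subobject --- to be the principal difficulty, and it is precisely the point at which the paper cautions that postulating a comonad structure on the tangent functor would be too strong: it is the weak-pullback condition, and not any comonad law, that legitimately closes the argument.
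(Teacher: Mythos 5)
First, a point of reference: the paper contains no proof of this statement to compare against. Theorem~\ref{thm:representation} is quoted verbatim from~\cite{Pavlovic:Mislove:Worrell:2006a}, explicitly ``omitting the considerations regarding the initiality of $FA\xrightarrow{\alpha}A$ and the finality of $X\xrightarrow{\xi}GX$''. So your proposal stands or falls on its own. Parts (a), (b), (c) are correct and are the expected arguments: the chase with naturality of $\lambda$ and contravariance of $P$ for (a); the definition $\lambda'=M\lambda\circ\epsilon G$ plus the transposition calculus of the dual adjunction for (b); the reflexive-coequalizer descent and diagonal fill-in for (c). But part (d) contains a concrete error. The image of $f'$ is \emph{not} $MC$: take the powerset connection $\wp^{op}\dashv\wp:{\sf Set}^{op}\to{\sf Set}$, $F=G={\rm Id}$, $\lambda={\rm id}$, $A=\{a\}$, $X=\{x\}$, $\alpha$ and $\xi$ identities, $f(a)=\{x\}$. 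The image $C$ of $f$ is a singleton, so $MC=\wp C$ has two elements, while $f'(x)=\{a\}$ has a one-element image $L$; as subobjects of $MA$, $L$ is properly contained in $MC$. So $M$ applied to the mono part of $f$ is not the mono part of $f'$; one must factor $f'$ directly in the regular category $\Cal{S}^{op}$ (which repairs this step, but the two factorizations can only be compared, never identified).

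The decisive gap is the step you yourself call the crux: the construction of $\ell$ is asserted, not proved, and the tools you invoke do not reach it. The lemma ``homomorphic images of $MFP$-coalgebras are sub-coalgebras'' does not apply, because the codomain $M\alpha:MA\to MFA$ is not an $MFP$-coalgebra --- that is exactly what ``twisted'' means. Concretely, the only commuting square available for descending $MFPe\circ\Lambda\xi$ along $e$ is
\begin{equation*}
MFm'\circ\bigl(MFPe\circ\Lambda\xi\bigr)\ =\ \bigl(M\alpha\circ m\bigr)\circ e ,
\end{equation*}
whose mono-candidate edge is $MFm'$, with $m'=Pm\circ\eta_A$ the transpose of $m$ --- not $MFPm$. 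Since $MFm'=MF\eta_A\circ MFPm$, mono-preservation of $MFP$ (which does follow from weak-pullback preservation) controls the factor $MFPm$ but says nothing about $MF\eta_A$: the unit $\eta_A$ is not epi in general (powerset again, $\eta_A:A\to\wp\wp A$), so $MF\eta_A$, and hence $MFm'$, need not be monic, and your proposed cancellation ``after reducing along $MFPm\circ MFPe=MFPf'$'' does not go through. Whether the weak-pullback hypothesis alone closes this descent, or whether one needs exactly the initiality of $\alpha$ that the quoted statement omits, is the entire mathematical content of (d), and the proposal leaves it open. (Your closing appeal to the paper's caution about comonad structures on $\T$ is also misplaced: that remark concerns the failure of counitality of $\delta:\T\to\T\T$, and has no bearing on the proof of this theorem.)
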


\paragraph{Examples.} Testing a standard coalgebraic presentation of
automata (e.g.~\cite{RuttenJ:Universal}) by linear process formulas
yields regular languages as the behaviors in $L$. The $MFP$-coalgebra
structure is induced by the language derivative. Testing the same
family of automata by a process logic with branching yields the
behaviors modulo the various bisimulation concepts from the branching
time spectrum~\cite{vanGlabbeek-I,vanGlabbeek-II}. Testing a coalgebraic
version of pushdown automata or Turing machines by linear process
formulas yields context-free languages, resp. recursively enumerable
languages as the behaviors in $L$, again with the language derivative
as the coalgebra structure. The details are in the Appendix
of~\cite{Pavlovic:Mislove:Worrell:2006a}\footnote{As mentioned earlier,
the Appendix was omitted from the published volume. The original version
of the paper is available online.}.
The approach also extends to nondeterministic and probabilistic systems.
The reason why testing coalgebras by algebras turns out to be both so
versatile, and also simpler than most of the purely coalgebraic
approaches to process semantics, is that it is, in a sense, just a
categorical formalization of the familiar practice of defining
computational models by specifying state machines on one hand and a
language acceptance relation on the other. It should be noted, though,
that, e.g., the purely coalgebraic approaches to trace semantics along
the lines of~\cite{Hasuo-Jacobs-Sokolova} are certainly more
appropriate for some purposes than our testing trace semantics, while
the non-coalgebraic approaches to computability and complexity, e.g.
through a monoidal computer~\cite{PavlovicD:IC12}, are more convenient
for other purposes than the coalgebraic Turing machines
of~\cite{Pavlovic:Mislove:Worrell:2006a}. And finally, while all
examples discussed so far are concerned with computation, our goal in
the present paper is to perhaps make a further step following the
ideas of~\cite{PavlovicD:LAPL}, and to provide a coalgebraic
reconstruction of some concepts from mathematical analysis.

\paragraph{Vector calculus by testing.} It turns out that both vector
fields and differential forms can be reconstructed using a simple
semantic connection where $\Cal{S}= \Cal{T}$ is the category of
manifolds viewed within a topos, and $MX=PX =\RR^X$, i.e. the dualizing
object, is an object of reals. In this setting, vector fields (or
dynamical systems) will be viewed as processes, presented as coalgebras,
whereas differential forms can be viewed as tests, and they can be
presented as algebras.

\section{Elements of vector calculus}\label{sec:manifolds}
\subsection{Differentials}
One way to establish that two functions behave similarly at a point is
to define their \emph{approximation}\/ relation.
\begin{defn}\label{def-approximate}
Let $X$ and $Y$ be metric spaces with the distance functions
$\dist_X: X\times X\to \RR_+$ and $\dist_Y: Y\times Y\to \RR_+$, and
let $f,g:X\to Y$ be continuous functions. We say that $f$ and $g$
\emph{approximate}\/ each other at $c\in X$ whenever the distance
between $f$ and $g$ vanishes at $c$, i.e.
\begin{eqnarray}\label{eq-approximate}
f\underset c \sim g & \iff & \lim_{x\to c}\frac{\dist_Y \left(f(x), g(x)\right)}{\dist_X(x,c)} = 0
\end{eqnarray}
\end{defn}
For convenience, we list the axioms for metrics in the Appendix.
Differential calculus really begins when $X$ and $Y$ are not just
metric spaces, but also vector spaces, so that the question of
\emph{linear}\/ approximation can be asked. To tie the metric and the
linear structure together, it is usually required that the conditions
\[
\dist(x+u, y+u) = \dist(x,y) \qquad \qquad \dist(rx, ry) = |r|\cdot \dist(x,y)
\] 
hold for all vectors $u$ and all scalars $r$. Such a distance function
on a metric space $E$ can then equivalently be specified using a norm
$\Vert - \Vert : E\to \RR_+$, which is related with $\dist: E\times E\to \RR_+$ by
\[\Vert x \Vert = \dist(x,0) \qquad\qquad \dist(x,y) = \Vert x-y \Vert \]
The axioms for the norm are also in the Appendix. This brings us into
the realm of \emph{normed vector spaces}. They are a staple of
functional analysis, at least when they are complete under the induced
topology: complete normed vector spaces are known as \emph{Banach}\/ spaces.
Since our goal here is to reconstruct the concepts of vector calculus
from the minimal structural assumptions, we ignore the completeness
requirement for the moment. Normed vector spaces, and their conceptual
ancestors and origins were studied thoroughly and exhaustively\footnote{Grothendieck
subsequently left the area, explaining to Malgrange: 'There is nothing
more to do, the subject is dead.' \cite[p.~1045]{FEA}} in Grothendieck's
thesis~\cite{GrothendieckA:TVS}. The norm and the linear structure
suffice for defining the notion of differential. 

\begin{defn}\label{def-differential}
Let $X$ and $Y$ be normed vector spaces. The norm structure determines
the set $\Cc(X,Y)$ of continuous maps, whereas the vector space
structure determines the set $\Lin(X,Y)$ of linear maps. The
\emph{differential} of $f\in \Cc(X,Y)$ is the partial operation
	\begin{align*}
		\bfDk f : X \rightharpoonup \Lin(X,Y)
	\end{align*}
such that the linear operator $\bfDk f(c)$, whenever it exists,
approximates the difference $f(x)-f(c)$, in the sense
\begin{eqnarray}
f(x) - f(c) &\underset c \sim & \bfDk f(c)(x-c)
\end{eqnarray}
where $x$ is a variable and $c$ a constant, i.e. by unfolding~\eqref{eq-approximate}
\begin{align*}
		\lim_{x\rightarrow c} \frac{%
		  \Vert f(x)-f(c)- \bfDk f(c)\cdot (x-c)\Vert}{%
		  \Vert x-c\Vert} = 0.
	\end{align*}
The function $f$ is \emph{differentiable} if $\bfDk f$ is defined for
all $c\in X$. The set of differentiable functions is written $\Cc^1(X,Y)$.
\end{defn}

The differentials can in fact be viewed as the linearizations of the
equivalence classes modulo the approximation relation~\eqref{eq-approximate}.
The following lemma spells this out formally. The proof is a standard
exercise with the norm.

\begin{lem}\label{lem-equiv}
Let $f, g\in \Cc^1(X,Y)$ be differentiable functions between normed
vector spaces. Then $f\underset c \sim g$ holds if and only if
$\bfDk f(c) = \bfDk g(c)$ and  $f(c)=g(c)$.
\end{lem}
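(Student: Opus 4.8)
The plan is to reduce both implications to the first-order expansions furnished by Definition~\ref{def-differential}. Differentiability of $f$ and $g$ at $c$ lets me write
\begin{align*}
f(x) &= f(c) + \bfDk f(c)(x-c) + r_f(x), &
g(x) &= g(c) + \bfDk g(c)(x-c) + r_g(x),
\end{align*}
where the remainders obey $\lim_{x\to c}\Vert r_f(x)\Vert/\Vert x-c\Vert = 0$ and likewise for $r_g$. Subtracting,
\begin{align*}
f(x)-g(x) = \bigl(f(c)-g(c)\bigr) + \bigl(\bfDk f(c)-\bfDk g(c)\bigr)(x-c) + \bigl(r_f(x)-r_g(x)\bigr),
\end{align*}
and everything will follow by dividing through by $\Vert x-c\Vert$ and tracking which terms survive the limit $x\to c$.

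For the direction ($\Leftarrow$), I would assume $f(c)=g(c)$ and $\bfDk f(c)=\bfDk g(c)$, so the difference collapses to $r_f(x)-r_g(x)$; the triangle inequality then bounds $\Vert f(x)-g(x)\Vert/\Vert x-c\Vert$ by $\Vert r_f(x)\Vert/\Vert x-c\Vert + \Vert r_g(x)\Vert/\Vert x-c\Vert$, both of which vanish, giving $f\underset c\sim g$.

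For ($\Rightarrow$), assume $f\underset c\sim g$. First I recover $f(c)=g(c)$: since $\Vert f(x)-g(x)\Vert/\Vert x-c\Vert\to 0$ while $\Vert x-c\Vert\to 0$, the numerator $\Vert f(x)-g(x)\Vert$ itself tends to $0$; but continuity of $f$ and $g$ forces $f(x)-g(x)\to f(c)-g(c)$, so uniqueness of limits in $Y$ yields $f(c)=g(c)$. (Equivalently, a nonzero constant term $f(c)-g(c)$ would make the ratio blow up.) With the constant term gone, the decomposition reduces to $L(x-c)+\bigl(r_f(x)-r_g(x)\bigr)$ with $L:=\bfDk f(c)-\bfDk g(c)$, and the triangle inequality delivers $\lim_{x\to c}\Vert L(x-c)\Vert/\Vert x-c\Vert = 0$.

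The one genuinely non-formal step, and the crux of the lemma, is to deduce $L=0$ from $\lim_{x\to c}\Vert L(x-c)\Vert/\Vert x-c\Vert=0$ for a fixed linear $L$. The difficulty is that this limit a priori only constrains $L$ through the directions along which $x$ approaches $c$, not pointwise. I would resolve it by homogeneity: fixing a nonzero $v\in X$ and restricting the limit to the ray $x=c+tv$ with $t\to 0^+$, linearity of $L$ and absolute homogeneity of the norm make $\Vert L(tv)\Vert/\Vert tv\Vert = \Vert Lv\Vert/\Vert v\Vert$ independent of $t$; since the restricted limit is still $0$, this constant must be $0$, whence $Lv=0$. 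As $v$ is arbitrary, $L=0$, i.e. $\bfDk f(c)=\bfDk g(c)$, which together with $f(c)=g(c)$ completes the proof.
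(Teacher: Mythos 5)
Your proof is correct and is exactly the ``standard exercise with the norm'' that the paper invokes without spelling out (it gives no proof of Lemma~\ref{lem-equiv} beyond that remark): first-order expansions, the triangle inequality, and homogeneity of the norm along rays to force the linear discrepancy $L$ to vanish. In particular, your careful handling of the crux step --- that $\lim_{x\to c}\Vert L(x-c)\Vert/\Vert x-c\Vert=0$ for a fixed linear $L$ implies $L=0$ --- is precisely what the paper leaves implicit, so nothing needs to be added or changed.
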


Whenever $\Lin(X,Y)$ is metrizable, and thus a normed vector space,
then the differential construction can be iterated, i.e. applied to
the differential itself whenever it is continuous and thus
$\bfDk f \in \Cc\left(X, \Lin(X,Y)\right)$, leading to
	\begin{align*}
		\bfDk^2 f = \bfDk \left(\bfDk f\right)\  : X \rightharpoonup \Lin\left(X, \Lin(X,Y)\right) \cong \Lin\left(X\otimes X, Y\right)
	\end{align*}
Note that $\Lin(X,Y)$ is not metrizable in general, but the peculiar
situations when it is not have been characterized in Mackey's work, and
clearly spelled out in~\cite[Ch.~3]{GrothendieckA:TVS}. In general,
whenever $\bfDk^i f \in \Cc\left(X, \Lin(X^{\otimes n},Y)\right)$ for
all $i = 0, 1, \ldots k$ we define
\begin{align*}
		\bfDk^{k+1} f = \bfDk \left(\bfDk^k f\right)\  : X \rightharpoonup  \Lin\left(X^{\otimes k+1}, Y\right)
	\end{align*}
and write $f \in \Cc^k(X,Y)$ when $f$ is $k$ times differentiable. The
maps which are $k$ times differentiable for all $k$ are in 
\bear
\Cc^\infty(X,Y) & = & \bigcap_{k=0}^\infty \Cc^k(X,Y)
\eear
The convention is that $\bfDk^0 f = f$ and $\Cc^0(X,Y) = \Cc(X,Y)$, and
of course $\Lin(X^{\otimes 0}, Y) = \Lin(\RR, Y) = Y$.

\begin{defn}\label{def-NVS}
Denote by $\Cat{NVS}_k$ the category where the objects are normed
vector spaces and the morphisms are $k$ times differentiable maps,
i.e. $\Cat{NVS}_k(X,Y)$ $= \Cc^k(X,Y)$. The \emph{smooth}\/ morphisms
are in $\Cat{NVS}_\infty(X,Y) = \Cc^\infty(X,Y)$.
\end{defn}

\subsection{Manifolds}
Vector calculus is built upon the differentials as its main conceptual
tool, and largely geared towards computing the linear approximations
of differentiable processes. Since nonlinear processes can only be
linearly approximated~\emph{locally}, the spaces where processes are
presented do not have a global linear structure, but only local linear
structures. This means that every point has a neighborhood homeomorphic
with a normed vector space. This is the idea of \emph{manifold}, going
back to Riemann and Poincar\'e, worked out in modern algebraic
topology~\cite{DoldA:Lectures}.

\begin{defn}
A \emph{manifold} is a topological space $M$ whose every point has a
neighborhood homeomorphic with convex open subset of a normed vector
space. More precisely, a manifold consists of
\begin{itemize}
\item the underlying topological space $M$,
\item a normed vector space $E$, called \emph{model space},
\item an open cover $\UUU = \{U_i\ |\ i\in I\}$ of $M$, i.e. $\bigcup_{i\in I} U_i = M$,
\item the open embeddings $U_i \stackrel{\varphi_i} \mono E$ for all $i$,
      with convex images, called \emph{charts}.
\end{itemize}
The set 
$
\AAA  =  \left\{U_i \stackrel{\varphi_i} \mono E\ |\ i\in I\right\}
$
is called the \emph{atlas} of the manifold $M$. 
\end{defn}

\paragraph{Remarks.} Note that the definition did not mention the
differential structure. It will be lifted from normed vector spaces
to manifolds in a moment. The point of the current purely
\emph{topological}\/ definition is that at each point $x\in M$ the
chart $U \stackrel \varphi \mono E$ with $U\ni x$ induces a local
linear structure; or more precisely, a local \emph{convex}\/ structure,
since only the convex combinations of the points in the neighborhood
$U$ can be formed\footnote{Other reasons and consequences of the
convexity requirement on charts are spelled out
in~\cite[Ch.~2]{GrothendieckA:TVS}.}. This suffices for defining
differentials. In most applications, each vector space $E$ is assumed
to be given with a basis, and the charts thus induce the local
coordinate systems within $M$. The spaces $E$ are usually even
Euclidean, and thus presented as $\RR^n$. For a moment, we stick
with abstract normed vector spaces not so much for the sake of
generality, as to emphasize what structure is really needed to get
differential calculus going. Since normed vector spaces suffice for
defining the differential, the charts induce the local
differential structures. The only problem is that over the open sets,
where the different charts may intersect, different differentials
may be induced. The next definition preempts that.

\begin{defn}
A manifold $M$ is \emph{$k$ times differentiable} if the charts
induce the same $k$-tuple differentials over their intersections. More
precisely, for any two charts $U_0 \stackrel{\varphi_0} \mono E$ and
$U_1 \stackrel{\varphi_1} \mono E$ in the atlas of $M$ it is
required that the homeomorphism
\[
\phi(U_0\cap U_1) \tto{\varphi_0^{-1}} U_0\cap U_1 \tto {\varphi_1} \varphi_1(U_0\cap U_1)
\]
is $k$ times differentiable, i.e. it is a $\Cc^k$-diffeomorphism of the open convex images $\phi_0(U_0\cap U_1)$ and $\varphi_1(U_0\cap U_1)$ of $U_0\cap U_1$. A manifold is
\emph{smooth} if it is differentiable for all positive integers $k$, having $\Cc^{\infty}$-diffeomorphisms.
\end{defn}

A manifold is thus a topological space $M$ with a model space $E$ and
an atlas $\AAA$ that lifts the differentials from $E$ to $M$. A morphism
between $k$ times differentiable manifolds should preserve the
$k$-tuple differentials. To align the continuity with the
differentiability of the manifold morphisms, we need to slightly
refine the notion of atlas.

\begin{defn}
The atlas $\AAA$ of a manifold $M$ is \emph{saturated} if for every
chart $U\tto \varphi E$ in $\AAA$ and every open subset $W$ of $U$
the restriction $W\inclusion U\tto \varphi E$ is also an element of $\AAA$.
\end{defn} 

It is easy to see that every atlas has a unique saturation. A saturated
atlas allows finding sufficiently small charts.

\begin{defn}
Let $M$ and $N$ be $k$ times differentiable manifolds with saturated
atlases. A continuous map $f:M\to N$ is a \emph{$k$ times differentiable}
morphism if for every $x\in M$ and the neighborhoods $V\in f(x)$ and
$U\ni x$ such that $U\subseteq f^{-1}(V)$, with the charts
$U\stackrel\varphi \mono E$ and $V\stackrel \psi \mono F$ the continuous map
\[
f_{\psi\varphi} \,:\, \varphi(U) \tto{\varphi^{-1}} U \tto f V \tto{\psi} \psi(V) 
\] 
is $k$ times differentiable, i.e.
$f_{\psi\varphi} = \psi\circ f\circ \varphi^{-1} \in \Cc^k\left(\varphi(U), \psi(V)\right)$.{}
This is illustrated in Fig.~1.

\begin{figure}[h]
 \[ \begin{tikzpicture}
	  \draw (0,0) -- (1,2) -- (4,2) -- (3,0) -- cycle;
	  \node at (0.25,1) {$E$};
	  \draw[rounded corners=4pt] (1,0.5) -- (1.5,1.5) -- (3,1.5) -- (2.5,0.5) -- cycle;
	  \node at (3.25,1.7) {$\varphi(U)$};
	  \fill[black!80] (1.5,1) circle (0.5ex) node[xshift=4ex] {$\varphi(x)$};
     \draw[rounded corners=10pt]  (0,6) -- (2,7) -- (4,6) --
        (4,3) -- (2,4) -- (0,3) -- cycle;
	  \node at (3.5,6.5) {$M$};
	  \node at (3,5.25) {$U$}; 
	  \draw (2,5) ellipse (25pt and 20pt);
	  \fill[black!80] (1.75,5.25) circle (0.5ex) node[xshift=2ex]{$x$};
	  \draw[->,thick] (1.5,5) ..controls +(-0.5,-1) and +(-1,2) .. node[pos=0.7,xshift=-2ex]{$\cong$} node[pos=0.7,xshift=2ex]{$\varphi$} (1.25,1.25);
	  \draw (5,0) -- (6,2) -- (9,2) -- (8,0) -- cycle;
	  \node at (8.75,1) {$F$};
  	  \draw[rounded corners=4pt] (6,0.5) -- (6.5,1.5) -- (8.5,1.5) -- (8,0.5) -- cycle;
	  \node at (8.25,1.7) {$\psi(V)$};
	  \fill[black!80] (6.5,1) circle (0.5ex) node[xshift=5ex] {$\psi(f(x))$};
     \draw[rounded corners=10pt]  (5,7) -- (7,6) -- (9,7) --
        (9,4) -- (7,3) -- (5,4) -- cycle;	     
	  \node at (6.5,6.5) {$N$};
	  \node at (8,5.25) {$V$};
	  \draw (7,5) ellipse (25pt and 20pt);
	  	  \fill[black!80] (6.5,5.25) circle (0.5ex) node[xshift=3.5ex]{$f(x)$};   
	  \draw[->,thick] (6.75,5) ..controls +(0.5,-1) and +(1,1.5) .. node[pos=0.7,xshift=-2ex]{$\cong$} node[pos=0.7,xshift=2ex]{$\psi$} (6.75,1.25);
	  \draw[->,thick] (3,7) ..controls +(1,0.5) and +(-1,0.5) .. node[yshift=2ex]{$f$} (6,7);
 	  \draw[->,thick] (2.8,4.5) ..controls +(1,-0.5) and +(-1,-0.5) .. node[yshift=2ex]{$f_{U}$} (6.2,4.5);
 	  \draw[->,thick] (2.8,1) ..controls +(1,-0.25) and +(-1,-0.25) .. node[yshift=2ex]{$f_{\psi\varphi}$} (6.2,1);
  \end{tikzpicture}
 \]
 \label{fig-Mmorphism}
  \caption{$k$-times differentiable map $f : M \rightarrow N$}
 \end{figure}
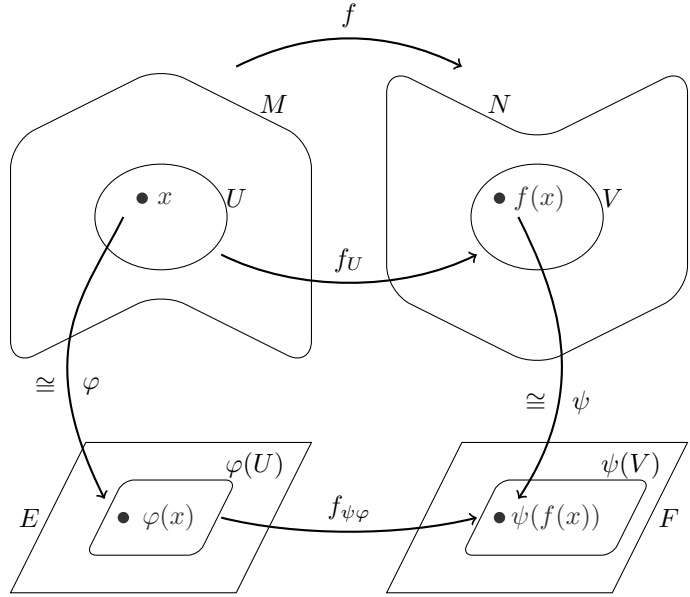
The \emph{smooth} morphisms are $k$ times differentiable for all $k$,
i.e. they are the elements of $\Cc^\infty(M,N) = \bigcap_{k=0}^\infty \Cc^k(M,N)$.

The category of $k$ times differentiable manifolds and their morphisms
is denoted $\Cat{Man}_k$. The category of smooth manifolds and smooth
morphisms is denoted $\Cat{Man}_\infty$.
\end{defn}

Modern vector calculus is largely built using differentiable manifolds~\cite{Abraham:Marsden:Ratiu:1988a}. The model spaces $E$ are usually assumed to be Banach spaces, i.e. normed vector spaces that are complete under the induced topology. However, the completeness plays no role in the definition of manifolds, or even of their tangents and vector fields. It is, of course, essential in solving differential equations and analyzing dynamical systems, but conspicuously unnecessary for describing them. We shall thus deviate from the bulk of literature where manifolds are viewed as continuously varying Banach spaces, and work with manifolds as continuously varying normed vector spaces. Although perhaps less familiar, this view simplifies our categorical treatment.

\subsection{Tangent vectors}
The idea of a \emph{tangent vector}\/ is that it is the linear
approximation of a curve at a given point. In vector calculus, the
geometric idea of linear approximation is captured by the concept of
the differential. If the curves in a normed vector space are viewed
as the differentiable maps from $\RR$, then we know from
Lemma~\ref{lem-equiv} that two curves $\varsigma, \chi \in \Cc^1(\RR,E)${}
through the same point $e = \varsigma(0) = \chi(0)$ have the same
differential if and only if $\varsigma \underset 0 \sim \chi$. The
tangent vectors through a point $e$ of a normed vector space $E$ can
thus be viewed as the equivalence classes modulo $\underset 0 \sim$.
Lifting the differentials from normed vector spaces to manifolds, the
tangent vectors on manifolds are defined in the same way, as illustrated
on Fig.~2.

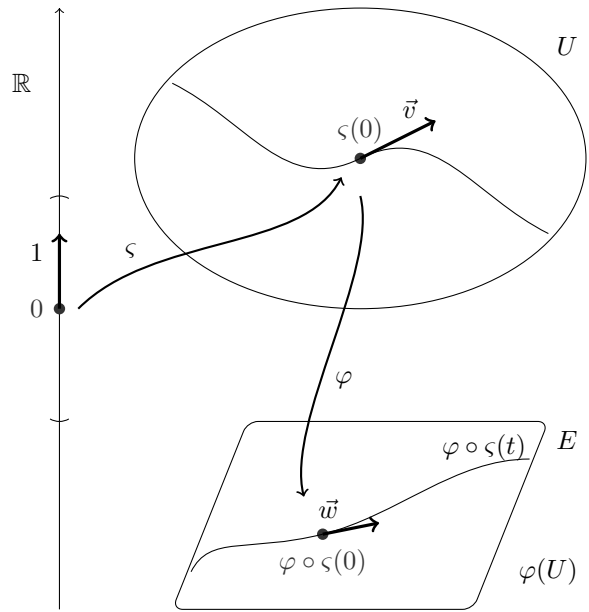
\begin{figure}[h]
\[ \begin{tikzpicture}
	  \draw[->] (0.5,0) to (0.5,8);
	  \node at (0,7) {$\RR$};
	  \node at (0,5.5) {};
	  \fill[black!80] (0.5,4) circle (0.5ex) node[xshift=-2ex]{$0$};
	  \draw[->,very thick] (0.5,4) to node[above,xshift=-2ex]{$1$} (0.5,5);
	  \draw (0.5,5.5) arc (90:60:0.25cm);
	  \draw (0.5,5.5) arc (90:120:0.25cm);
	  \draw (0.5,2.5) arc (270:300:0.25cm);
	  \draw (0.5,2.5) arc (270:240:0.25cm);
	  \draw[rounded corners=4pt] (2,0) -- (3,2.5) -- (7,2.5) -- (6,0) -- cycle;
	  \node at (7,0.5) {$\varphi(U)$};
	  \fill[black!80] (4,1) circle (0.5ex) node[yshift=-2.5ex]{$\varphi\circ\varsigma(0)$};
	  \draw (2.25,0.5) .. controls +(0.25,0.5) and +(-1,-0.25) .. (4,1);
	  \draw (4,1) .. controls +(1,0.25) and +(-1,0) .. (6.75,2) node[xshift=-4ex,yshift=1ex]{$\varphi\circ\varsigma(t)$};
	  \draw[->,very thick] (4,1) to node[above,xshift=-2ex]{$\vec w$} (4.75,1.15);
	  \draw (4.5,6) ellipse (3cm and 2cm);
	  \node at (7.25,7.5) {$U$};
	  \fill[black!80] (4.5,6) circle (0.5ex) node[yshift=2.5ex]{$\varsigma(0)$};
	  \draw (2.5,0.5) (2,7) .. controls +(1,-0.5) and +(-1,-0.5) .. (4.5,6);
	  \draw (4.5,6) .. controls +(1,0.5) and +(-1,0.5) .. (7,5);
	  \draw[->,very thick] (4.5,6) to node[above,xshift=1ex,yshift=1ex]{$\vec v$} (5.5,6.5);
  	  \draw[->,thick] (0.75,4) ..controls +(1,1) and +(-0.5,-1) .. node[pos=0.2,yshift=2ex]{$\varsigma$} (4.25,5.75);
  	  \draw[->,thick] (4.5,5.5) ..controls +(0.25,-1) and +(-0.25,1) .. node[pos=0.6,xshift=2ex]{$\varphi$} (3.75,1.5);
     \node at (7.25,2.25) {$E$};
  \end{tikzpicture}
\]    
  \caption{A tangent vector on the manifold $M$ can be obtained in the form $\vec v = \bfDk \varsigma(0)\cdot 1$,  where $\varsigma:\RR \to M$ is a suitable curve through the tangent point. By definition, the differential $\bfDk \varsigma(0)$ in $M$ is the differential  $ \bfDk(\varphi\circ \varsigma)(0)$ in $E$, where $U\stackrel \varphi \mono E$ is a chart at $\varsigma(0)$.}
  \end{figure}
  
\begin{defn}
	A $k$ times \emph{differentiable curve} or \emph{path} on a manifold
	$M$ is a map $\varsigma\in C^{k}(\RR,M)$.
\end{defn}

\begin{defn}\label{def-bundle}
A \emph{tangent vector} (or simply \emph{tangent}) over a differentiable
manifold $M$ is the equivalence class of paths in $M$ modulo the
approximation relation. That is the set of all tangents over $M$ is
collected in its \emph{tangent bundle}
\bea
\T M & = & \Cc^1(\RR,M) / \underset 0 \sim
\eea
The equivalence classes of curves $[\varsigma]_\sim \in \T M$ are often
called\/ \emph{linearizations} of their elements.
\end{defn}

The point of collecting the tangents over a manifold $M$ into a bundle
is that the tangent bundle $\T M$ is a manifold again, and that dynamical
systems can be presented as suitable manifold morphisms $M\to \T M$.
While this provides a useful high level view of dynamical systems, the
direct implementation of this idea requires working with the low level
representations of the tangents, and unfolding charts and atlases over
and over again. To give the reader a flavor of this implementation, we
sketch it in the rest of this section. In the next section, we propose
a high level view of tangent bundles, as behaviors recognized through
testing.

\subsubsection{The tangent bundle functor $\T$}
By definition, the elements of the tangent bundles are the equivalence
classes of curves $[\varsigma]_\sim$, where $\varsigma \in \Cc^1(\RR,M)${}
is a representative. By mapping each tangent $[\varsigma]_\sim$ to the
point where it touches the manifold, we get the projection
\bea\label{eq-projection}
\varepsilon\ :\ \T M & \to & M\\[0pt]    
[\varsigma]_{\sim} &\mapsto & \varsigma(0) \notag
\eea
which is well defined by Lemma~\ref{lem-equiv}. We need to lift to
$\T M$ not just the topology, but also the atlas from $M$. Towards
that goal, let us first consider a very special case. 

\begin{lem}\label{def:trivialization}
Consider an open set $U$ in a normed vector space $E$ as a manifold,
with the obvious chart. Then $\T U \cong U\times E$.
\end{lem}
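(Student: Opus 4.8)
The plan is to exhibit an explicit bijection $\Phi : \T U \to U\times E$ and then argue that it transports the manifold structure, so that the $\cong$ is an isomorphism of manifolds and not merely of sets. I would define
\[
\Phi[\varsigma]_\sim \ =\ \bigl(\varsigma(0),\ \bfDk\varsigma(0)\cdot 1\bigr)\ \in\ U\times E .
\]
The first coordinate is just the projection $\varepsilon$ of~\eqref{eq-projection}, and the second is the velocity vector, read off through the canonical identification $\Lin(\RR,E)\cong E$ that sends a linear map to its value at $1\in\RR$ (exactly the $\vec v = \bfDk\varsigma(0)\cdot 1$ of Fig.~2).

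Well-definedness and injectivity both fall out of Lemma~\ref{lem-equiv} in a single stroke: for $\Cc^1$ curves through points of $U$ we have $\varsigma\underset 0\sim \chi$ iff $\varsigma(0)=\chi(0)$ and $\bfDk\varsigma(0)=\bfDk\chi(0)$, i.e. iff $\Phi[\varsigma]_\sim=\Phi[\chi]_\sim$. This says at once that $\Phi$ descends to the quotient $\T U=\Cc^1(\RR,U)/\underset 0\sim$ and that the descended map is injective.

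For surjectivity I would realize an arbitrary pair $(u,w)\in U\times E$ by a curve. The naive affine choice $t\mapsto u+tw$ has the correct base point and velocity at $0$ but need not stay inside a proper open $U$; since $U$ is open it contains a ball $B(u,r)$, so I would damp the affine curve, taking (for $w\neq 0$)
\[
\varsigma(t)\ =\ u + \frac{r}{2\Vert w\Vert}\,\tanh\!\Bigl(\frac{2\Vert w\Vert}{r}\,t\Bigr)\,w ,
\]
and the constant curve when $w=0$. This $\varsigma$ is smooth on all of $\RR$, stays within distance $r/2$ of $u$ (hence in $U$), and satisfies $\bfDk\varsigma(0)\cdot 1 = w$, so $\Phi[\varsigma]_\sim=(u,w)$. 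Thus $\Phi$ is a bijection. Finally, because the topology and atlas on $\T U$ are precisely what is being lifted from the single chart of $U$, I would transport them along $\Phi$: the one chart of $U$ paired with the identity on $E$ is a chart on $U\times E$, and $\Phi$ is exactly the coordinate presentation of a tangent as its (base point, velocity) pair, so $\Phi$ is compatible with these charts by construction and is therefore a diffeomorphism.

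The step I expect to be the main obstacle is surjectivity, specifically the tension between the convention that paths are defined on all of $\RR$ and the fact that $U$ may be a proper open subset that an affine ray would leave; the damping reparametrization above resolves this while preserving the $1$-jet at $0$. Everything else is bookkeeping once Lemma~\ref{lem-equiv} and the identification $\Lin(\RR,E)\cong E$ are in hand.
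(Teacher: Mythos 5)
Your proposal is correct and follows the same route as the paper: the paper's proof sketch exhibits exactly your map $[\varsigma]_\sim \mapsto \bigl(\varsigma(0), \bfDk\varsigma(0)\cdot 1\bigr)$ via the identification $\Lin(\RR,E)\cong E$, and leaves the rest implicit. You have merely completed the sketch, and your completion is sound --- in particular the $\tanh$-damped curve correctly handles surjectivity, a point the paper glosses over even though its paths are defined on all of $\RR$, while well-definedness and injectivity follow from Lemma~\ref{lem-equiv} exactly as you say.
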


\begin{proof}[Proof sketch]
Since the linearization of the path $\varsigma\in C^{1}(\RR,E)$ at
$\varsigma(0)$ is generally given by evaluating the differential
$\bfDk\varsigma : \RR \rightarrow \Lin(\RR,E) = E$ at $0$ to get
$\bfDk \varsigma (0)\in E$, in this particular case we have
\bea\label{proof-map}
	\T U & \to & U\times E\\[0pt]
[\varsigma]_\sim &\mapsto & < \varsigma(0),\bfDk \varsigma(0)\cdot 1 > \notag
\eea
where $1$ is the unit vector at $0$ in $\RR$. 
\end{proof}

\begin{prop}\label{prop:Tis manifold}
	$\T M$ carries a canonical manifold structure which restricts to its open submanifolds
	\begin{equation}\label{diag-ext}
\begin{tikzcd}[ampersand replacement=\&,column sep=large]
      \T U \arrow[tail]{r}\arrow[swap]{d}{\varepsilon} \& \T M \arrow{d}{\varepsilon}
      \\
      U\arrow[hookrightarrow]{r} \& M
    \end{tikzcd}
\end{equation}
If $M$ is a $k$ times differentiable manifold, then $\T M$ is $k-1$ times differentiable.
\end{prop}

\begin{proof}[Proof sketch]
Suppose that the manifold $M$ is given with the atlas 	
\bear
\calA  & = & \left\{U_i \tto{\varphi_i} E\ |\ i\in I\right\}
\eear 
We claim  that 
	 \bear 
	 \T \calA & = & 
	\left\{U_i\times E \tto{\varphi_i\times E} E\times E\ |\ i\in I \right\}\eear
is the atlas for $\T M$ which restricts to $M$'s submanifolds as
claimed. For each linearization $[\varsigma]_\sim \in \T M$, the atlas
$\calA$ gives $\la U,\varphi\ra \in \calA(\varsigma(0))$. In the
neighborhood of $q=\varsigma(0)$ the paths $\varsigma$ and
$\varphi\circ\varsigma$ can be used interchangeably. But
$\delta =\varphi\circ \varsigma : \RR \rightarrow \varphi(U)$
is given by $\delta(t) = \varphi(\varsigma(0)) + t \bfDk(\varphi\circ\varsigma)(0)\cdot e$
and $\delta$ is completely determined by $\la \varphi(\varsigma(0)) , v\ra$,{}
with $v=\bfDk(\varphi\circ\varsigma)(0)\cdot e = D\cdot e \in E$. Thus
the cylinder over $\phi(U)$ is a neighborhood in $\T M$, i.e.
$\phi(U)\times E \in \calN\left([\varsigma]_\sim\right)$. 
\end{proof}

Proposition~\ref{prop:Tis manifold} determines the object part of the
functor $\T :\Cat{Man}_{k+1}\to \Cat{Man}_k$. The arrow part is defined
along the same lines.

\begin{defn}\label{def:reduceclass}
Define the category of \emph{basic} manifolds with $k$ times
differentiable maps $\Cat{BMan}_k$ as follows:
\begin{itemize}
\item the objects are pairs $<U,E>$ where $E$ is a normed vector space,
      and $U\subseteq E$ is an open convex subset;
\item the morphisms $<U,E> \tto f <V,F>$ are the $k$ times differentiable
      maps $f\in \Cc^{k}(E,F)$ such that $f(U)\subseteq V$.
\end{itemize}
The category $\Cat{BMan}_\infty$ has the same objects, but the morphisms
are smooth, i.e. $\Cat{BMan}_\infty (M,N) = \bigcap_{k=0}^\infty \Cat{BMan}_k (M,N)$.
\end{defn}

\begin{prop}\label{prop-BMan}
The following mappings define a functor
$\T : \Cat{BMan}_{k+1} \rightarrow \Cat{BMan}_{k}$
\begin{align*}
	&\begin{tikzcd}[ampersand replacement=\&,column sep=large]
      U\subseteq E \arrow[|->]{d}
      \\
      U\times E
    \end{tikzcd},
	&
	\begin{tikzcd}[ampersand replacement=\&,column sep=3.2cm]
		U\subseteq E \arrow{r}[name=U,below]{}{f}  
		    \&
		V\subseteq F 
		  \\
		U\times E \arrow{r}[name=D,above]{}[swap]{\T f=\la f\circ\pi_{0}, \bfDk f(\pi_{0})\cdot\pi_{1}\ra}
		   \&  
	   V\times F
	   \arrow[|->,to path= (U) -- (D)]{}; 
   \end{tikzcd}
\end{align*}
Restricted to the smooth morphisms in $\Cat{Ban}_{\infty}$, the above
construction gives the endofunctor
$\T : \Cat{BMan}_{\infty} \rightarrow \Cat{BMan}_{\infty}$.
\end{prop}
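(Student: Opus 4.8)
The plan is to verify the three defining conditions of a functor — well-definedness on objects, well-definedness on morphisms, and preservation of identities and composites — while keeping the degree bookkeeping $\Cat{BMan}_{k+1}\to\Cat{BMan}_{k}$ in view throughout. On objects there is almost nothing to do: if $U\subseteq E$ is open and convex, then $U\times E\subseteq E\times E$ is open (a product of opens) and convex (a product of convex sets), so $U\times E$ is again a legitimate object of $\Cat{BMan}_{k}$.

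The first real step is the morphism assignment together with the drop in differentiability class. Given $f\in\Cc^{k+1}(E,F)$ with $f(U)\subseteq V$, I must show that $\T f=\la f\circ\pi_{0},\ \bfDk f(\pi_{0})\cdot\pi_{1}\ra$ lies in $\Cc^{k}(E\times E,F\times F)$ and carries $U\times E$ into $V\times F$. The first component $f\circ\pi_{0}$ is the composite of a smooth linear projection with $f$, hence $\Cc^{k+1}$ and a fortiori $\Cc^{k}$, and it sends $U\times E$ into $f(U)\subseteq V$. The second component factors as
\[
E\times E \xrightarrow{\ \bfDk f\,\times\,\mathrm{id}\ } \Lin(E,F)\times E \xrightarrow{\ \mathrm{ev}\ } F ,
\]
where $\bfDk f\in\Cc^{k}\!\left(E,\Lin(E,F)\right)$ precisely because $f\in\Cc^{k+1}$ (using the operator-norm structure on $\Lin(E,F)$, which is the standing assumption that lets the differentials be iterated), and the evaluation map $\mathrm{ev}$ is bilinear and therefore smooth. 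Their composite is thus $\Cc^{k}$. This is exactly the point at which one degree of differentiability is consumed, explaining the signature $\Cat{BMan}_{k+1}\to\Cat{BMan}_{k}$; the second coordinate lands in $F$ with no constraint, so $\T f$ indeed maps $U\times E$ into $V\times F$.

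The remaining step is functoriality. Preservation of identities is immediate, since $\bfDk\,\mathrm{id}(x)=\mathrm{id}_{E}$ for every $x$ gives $\T(\mathrm{id})=\la\pi_{0},\pi_{1}\ra=\mathrm{id}_{U\times E}$. Preservation of composition is the substantive part, and it is precisely the chain rule. For $f:U\to V$ and $g:V\to W$ I would evaluate both sides at an arbitrary $(x,v)\in U\times E$:
\[
(\T g\circ\T f)(x,v)=\T g\bigl(f(x),\,\bfDk f(x)\cdot v\bigr)=\Bigl(g(f(x)),\ \bfDk g(f(x))\cdot\bigl(\bfDk f(x)\cdot v\bigr)\Bigr),
\]
and compare with $\T(g\circ f)(x,v)=\bigl((g\circ f)(x),\ \bfDk(g\circ f)(x)\cdot v\bigr)$. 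The first coordinates agree trivially, and the second coordinates agree exactly when $\bfDk(g\circ f)(x)=\bfDk g(f(x))\circ\bfDk f(x)$. The main obstacle is therefore establishing this chain rule at the present level of generality — arbitrary normed vector spaces rather than $\RR^{n}$ — but the standard estimate goes through verbatim: one adds and subtracts $\bfDk g(f(c))\cdot\bigl(f(x)-f(c)\bigr)$, bounds the first remainder using continuity of $f$ together with boundedness of the operator $\bfDk g(f(c))$, and bounds the second using the approximation property of $\bfDk g$ from Definition~\ref{def-differential}, all with respect to the norm. Finally, the smooth case follows by the identical computation: if $f$ is $\Cc^{\infty}$ then $\bfDk f$ is $\Cc^{\infty}$ and, since $\mathrm{ev}$ is smooth, $\T f$ is smooth, so $\T$ restricts to the claimed endofunctor $\T:\Cat{BMan}_{\infty}\to\Cat{BMan}_{\infty}$.
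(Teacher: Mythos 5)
Your proof is correct and takes essentially the same approach as the paper's: in both, the substance is that preservation of composition, evaluated pointwise on $(x,v)$, is exactly the chain rule $\bfDk(g\circ f)(x)=\bfDk g(f(x))\circ \bfDk f(x)$, while preservation of identities reduces to $\bfDk\,\mathrm{id}$ being the identity (the paper cites Lemma~\ref{lem-linear} for this, which amounts to the same computation since the identity is linear). Your additional bookkeeping --- openness and convexity of $U\times E$, and the factorization of the second component of $\T f$ through $\Lin(E,F)\times E\to F$ via evaluation, which accounts for the drop from $\Cc^{k+1}$ to $\Cc^{k}$ --- fills in well-definedness checks that the paper leaves implicit in the definition.
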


\begin{proof}
We need to check that the arrow part of the construction is functorial.
The fact that $\T ({\rm id}) = {\rm id}$ follows from
Lemma~\ref{lem-linear} below. By unfolding the definition, which says that 
\bear
\T f (q,v) & = & \left<f(q), \bfDk f(0) \cdot v\right>
\eear
and using the chain rule (proved in the usual way) for the differential
from Def.~\ref{def-differential}, we prove the claim: 
\bear
\T (f\circ g)(q,v) & = & \big<f(g(q)), \bfDk (f\circ g)(q) \cdot v\big>\\
	 & = &  \big<f(g(q)), \bfDk f (g(q)) \cdot \bfDk g(q) \cdot  v\big>\\
	 & = &  \T f \left(g(q), \bfDk g(q) \cdot  v\right) \\
	 & = &  \left(\T f \circ \T g\right) (q,v)
	 \eear
\end{proof}

\paragraph{Remarks.} This exhibits the fact that $\T (f\circ g) = \T (f) \circ \T (g)$
is locally equivalent to the chain rule. The projection
\bear
\Cat{BMan}_k & \to & \Cat{NVS}_k\\
<U,E> & \mapsto & E
\eear
makes each $\Cat{BMan}_k$ into a fibered category over $\Cat{NVS}_k$.
It is important to note that $\T$ is not a lifting of the squaring functor 
\bear
S\ :\ \Cat{BMan}_k & \to & \Cat{BMan}_k\\
X &\mapsto & X\times X
\eear
although the object part of $\T$ defined above, mapping $U\subseteq E$
to $U\times E$, might suggest that it is. But the arrow part is obviously
different: while the squaring functor maps $f\mapsto f\times f$, the
second component of $\T f$ is not $f$, but its linear approximation.
\begin{lem}\label{lem-linear}
$\T f = f\times f$ if and only if $f$ is linear.
\end{lem}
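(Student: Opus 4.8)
The plan is to unfold both sides of the claimed equation pointwise and to compare their components. By Proposition~\ref{prop-BMan}, for a morphism $f : \la U,E\ra \to \la V,F\ra$ and a point $\la q,v\ra \in U\times E$ we have
\begin{align*}
\T f\la q,v\ra \ =\ \big\la f(q),\, \bfDk f(q)\cdot v\big\ra,
\end{align*}
whereas the squaring functor gives $(f\times f)\la q,v\ra = \la f(q),f(v)\ra$. The first components coincide unconditionally, so the equation $\T f = f\times f$ is equivalent to the identity
\begin{align*}
\bfDk f(q)\cdot v\ =\ f(v) \qquad \text{for all } q\in U,\ v\in E.
\end{align*}
Everything then reduces to showing that this identity holds exactly when $f$ is linear.

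For the direction ``if $f$ is linear'', I would observe that a continuous linear $f$ satisfies $f(x)-f(c) = f(x-c)$ with literally zero remainder, so the approximation relation $f(x)-f(c)\underset c\sim f(x-c)$ holds trivially; by Definition~\ref{def-differential} this forces $\bfDk f(c) = f$ at every $c$. Substituting $\bfDk f(q) = f$ into the displayed identity makes both sides equal to $f(v)$, whence $\T f = f\times f$. Note that $f$ is a morphism, hence an element of $\Cc^{k+1}(E,F)$, so its linearity automatically comes with the continuity needed for the differential to behave as stated.

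For the converse, I would fix any base point $q_0\in U$ and read the identity $f(v) = \bfDk f(q_0)\cdot v$ as an equality of maps $E\to F$. By Definition~\ref{def-differential} the differential $\bfDk f(q_0)$ is by construction an element of $\Lin(E,F)$, i.e.\ a linear operator; hence $f$, being equal to it, is itself linear. This step quietly uses that $U$ is nonempty and that the second factor of $\T$ ranges over the whole model space $E$, so that $v$ is genuinely a free variable.

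I expect no serious obstacle here: the statement is essentially a bookkeeping consequence of the definitions, and the only point deserving attention is that the two implications draw on complementary facts about differentials of linear maps --- that the differential of a linear map is constant and equal to the map, and that \emph{every} differential is, by definition, a linear operator. The computation that the differential of a linear map is itself can be quoted as the standard fact already underlying the foundational cases of the chain rule used in Proposition~\ref{prop-BMan}.
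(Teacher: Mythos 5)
Your proof is correct, and it is essentially the argument the paper leaves implicit: the paper states Lemma~\ref{lem-linear} with no proof at all, and your unfolding of $\T f$ versus $f\times f$ into the pointwise identity $\bfDk f(q)\cdot v = f(v)$, settled by the two standard facts that the differential of a (continuous) linear map is the map itself and that every differential is by definition an element of $\Lin(E,F)$, is exactly the computation the paper relies on when it invokes this lemma in Proposition~\ref{prop-BMan}. Your side remark that the converse direction needs $U\neq\emptyset$ (otherwise both maps are empty and the equivalence fails vacuously for non-linear $f$) is a legitimate edge case that the paper glosses over.
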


\begin{prop}\label{prop-extension}
The functors $\T : \Cat{BMan}_{k+1} \to \Cat{BMan}_k$ have unique
extensions to $\T : \Cat{Man}_{k+1} \to \Cat{Man}_k$ along the inclusions
of basic manifolds into differentiable manifolds in general:
\[	\begin{tikzcd}
\Cat{BMan}_{k+1} \arrow{r}{\T} \arrow[hookrightarrow]{d}& 
\Cat{BMan}_{k}\arrow[hookrightarrow]{d} \\
\Cat{Man}_{k+1} \arrow[dashed]{r}{\T}& 
\Cat{Man}_{k}
	\end{tikzcd}
\]
\end{prop}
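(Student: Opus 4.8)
The plan is to exhibit every $(k+1)$-times differentiable manifold as a canonical gluing of basic manifolds, and to define the extension by gluing the $\T$-images of the pieces. Concretely, fix $M$ with a saturated atlas $\calA = \{U_i \xrightarrow{\varphi_i} E \mid i\in I\}$. The images $\varphi_i(U_i)\subseteq E$ are objects of $\Cat{BMan}_{k+1}$, and the transition maps $\varphi_{ij} = \varphi_j\circ\varphi_i^{-1}$, restricted to the convex sub-charts covering the overlaps $\varphi_i(U_i\cap U_j)$, are morphisms of $\Cat{BMan}_{k+1}$; saturation guarantees that these sub-charts are again in the atlas, so the whole cover together with its pairwise and triple overlaps forms a diagram $\calD_M$ in $\Cat{BMan}_{k+1}$. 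Its colimit, computed as the usual gluing (the coequalizer of the two evident maps from $\coprod_{ij}(U_i\cap U_j)$ to $\coprod_i U_i$ along open embeddings), is exactly $M$. Thus $\Cat{BMan}_{k+1}$ sits \emph{densely}\/ inside $\Cat{Man}_{k+1}$: the objects of $\Cat{Man}_{k+1}$ are precisely the open-cover gluings of basic ones.

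First I would define the extension on objects by $\T M := \operatorname{colim}(\T\circ\calD_M)$, i.e. by gluing the cylinders $\varphi_i(U_i)\times E$ along the transition maps $\T\varphi_{ij}$. Since $\T$ is a functor on $\Cat{BMan}_{k+1}$ by Proposition~\ref{prop-BMan}, the cocycle identity $\varphi_{ik}=\varphi_{jk}\circ\varphi_{ij}$ on triple overlaps is carried to $\T\varphi_{ik}=\T\varphi_{jk}\circ\T\varphi_{ij}$, so the gluing is coherent and the colimit exists in $\Cat{Man}_k$. By construction its atlas is precisely the $\T\calA$ of Proposition~\ref{prop:Tis manifold}, and the chart inclusions restrict as in diagram~\eqref{diag-ext}; in particular, when $M$ is itself basic the diagram $\calD_M$ is a single object and $\T M = U\times E$, so the square of functors commutes. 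For a morphism $f\colon M\to N$ I would use that $f$ is locally represented by the basic morphisms $f_{\psi\varphi}$, set $\T f$ to be the gluing of the local maps $\T f_{\psi\varphi}$, and check consistency on overlaps: the overlap transition for $\T f$ is $\T$ applied to the corresponding overlap transition for $f$, so it matches because $\T$ respects composition. Functoriality of $\T$ on $\Cat{Man}_{k+1}$ then reduces, chart by chart, to the functoriality already established on $\Cat{BMan}_{k+1}$, i.e. to the chain rule in disguise.

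Uniqueness I would argue from the density just described. Any extension $\T'$ making the square commute agrees with $\T$ on basic manifolds and on basic morphisms; requiring it to respect the canonical atlas gluings, equivalently to restrict to open submanifolds as in~\eqref{diag-ext}, forces $\T' M$ to be the colimit of $\T'\circ\calD_M = \T\circ\calD_M$, hence canonically isomorphic to $\T M$, and likewise forces $\T' f = \T f$, since a manifold morphism is determined by its chart-wise data. In this sense the extension preserving open gluings is unique.

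The main obstacle I expect is not any single computation but the coherence bookkeeping: verifying that the glued object $\T M$ and the glued morphism $\T f$ are independent of the chosen atlas and of the chosen chart representatives on overlaps. Every instance of this reduces to functoriality of $\T$ on $\Cat{BMan}_{k+1}$ together with the already-proved fact (Proposition~\ref{prop:Tis manifold}) that the $\T$-images of charts glue to a genuine $k$-times differentiable manifold. The only genuinely new point is to recognize that the two coherences, for objects and for morphisms, are exactly the images under the functor $\T$ of the corresponding well-known coherences for $M$ and for $f$ themselves, so that atlas-independence for the extension is inherited from atlas-independence downstairs.
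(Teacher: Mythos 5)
Your proof is correct, but it takes a genuinely different route from the paper's. The paper does not rebuild $\T M$ by gluing: it takes the already-defined global object $\T M = \Cc^1(\RR,M)/\underset 0 \sim$ from Def.~\ref{def-bundle} (with the manifold structure of Prop.~\ref{prop:Tis manifold}) and defines the arrow part intrinsically by postcomposition, $\T f\left([\varsigma]_\sim\right) = [f\circ\varsigma]_\sim$, which is manifestly chart-free and functorial; soundness comes from Lemma~\ref{lem-equiv} plus the chain rule, and all the chart work is concentrated in one place, namely showing via diagram~\eqref{diag-look} that this globally defined $\T f$ restricts on charts to the basic maps $\la f\pi_0,\bfDk f\cdot\pi_1\ra$ and is therefore $k$ times differentiable. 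Your construction runs in the opposite direction: you build $\T M$ and $\T f$ as colimits of chart-level data, so commutation of the square and the restriction property~\eqref{diag-ext} hold by construction, and functoriality reduces to Prop.~\ref{prop-BMan}; the price is exactly the coherence bookkeeping you identify (atlas-independence of the glued object and morphism), which the paper's intrinsic definition avoids entirely. Your treatment of uniqueness is actually more careful than the paper's: the paper asserts uniqueness but never argues it, and indeed bare commutation of the square does not force uniqueness on non-basic manifolds; your density argument makes explicit the locality hypothesis (respecting open gluings, i.e.~\eqref{diag-ext}) under which uniqueness genuinely holds. One caveat you share with the paper rather than introduce: Definition~\ref{def:reduceclass} makes $\Cat{BMan}$ morphisms globally defined maps $f\in\Cc^k(E,F)$, so the transition maps $\varphi_j\circ\varphi_i^{-1}$, defined only on $\varphi_i(U_i\cap U_j)$, are not literally $\Cat{BMan}_{k+1}$ morphisms; the paper's own use of $f_U$ and $f_{\psi\varphi}$ suffers the same mismatch, so your diagram $\calD_M$ is on an equal footing, but a fully rigorous version of either proof would first relax that definition to maps defined on the open set.
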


\begin{proof}
The object part of the extension maps the $k+1$ times differentiable
manifolds $M$ to the tangent bundles $\T M$ as described in
Def.~\ref{def-bundle}, with the manifold structure described in
Prop.~\ref{prop:Tis manifold}. Towards the arrow part, for any
$f\in \Cat{Man}_{k+1}(M,N)$ and any tangent $[\varsigma]_\sim \in \T M$
represented by a differentiable curve $\varsigma:\RR \to M$, define
\bea\label{eq-Tf}
\T f\left([\varsigma]_\sim\right) & = & \left[f\circ \varsigma\right]_\sim
\eea
The fact that this definition is sound, i.e. that $\varsigma \sim \varsigma'${}
over $M$ implies $f\circ \varsigma \sim f\circ \varsigma'$ over $N$
follows from Lemma~\ref{lem-equiv}, using the chain rule. The fact
that $\T f$ is a $k$ times differentiable manifold homomorphism,
i.e. in $\Cat{Man}_k\left(\T M, \T N\right)$, follows from the
fact that it restricts to $\T f_U \in \Cat{BMan}_k\left(\T U, \T V\right)${}
for all corresponding charts $U\stackrel \varphi \mono E$ of $M$ and
$V\stackrel\psi \mono F$ of $N$ and the restriction
$f_U\in \Cat{BMan}_k\left(U ,V\right)$ constructed as in Figure~1.
They make the top trapezoid in the following diagram commutative.
\begin{equation}\label{diag-look}
		\begin{tikzcd}[ampersand replacement=\&, column sep=2cm]
			M \arrow{rrr}{f}
			  \& \& \& N \\
			  \& U \arrow[hook]{lu}{\varphi}
			       \arrow{r}{f_{U}}
			       \& V \arrow[hookrightarrow]{ru}{\psi} \& \\
			  \& U\times E
			       \arrow{u}{\varepsilon} 
			       \arrow[tail]{dl}{\varphi\times E}
			       \arrow{r}{\la f\pi_{0},\bfDk f\cdot \pi_{1}\ra}
			       \& V\times F 
			          \arrow{u}{\varepsilon}
			          \arrow[tail]{rd}{\psi\times F}
			          \& \\
			\T M \arrow{uuu}{\varepsilon}
			   \arrow{rrr}{\T f}
			   \& \& \& \T N \arrow{uuu}{\varepsilon}     
		\end{tikzcd}
\end{equation}
The trapezoids on the sides commute as in~\eqref{diag-ext}, whereas
the square in the middle commutes by Lemma~\ref{lem:epsilon}, proved
below. The bottom trapezoid then shows how $\T f$, defined
in~\eqref{eq-Tf}, restricts to a mapping on the charts as defined in
Prop.~\ref{prop:Tis manifold}. Tracking these restrictions around
Figure~1 again, we see that $\T f$ is locally glued from the
differentials $\la f\pi_{0},\bfDk f\cdot \pi_{1}\ra$. If $f$ is
$k+1$ times differentiable, then $\bfDk f$ is $k$ times
differentiable, and thus each restriction of $\T f$ to a submanifold
$\T U = U\times E$ of $\T M$ is $k$ times differentiable. Therefore,
$\T f$ must be $k$ times differentiable itself.
\end{proof}

\begin{lem}\label{lem:epsilon} The maps $\varepsilon_M: \T M \to M$
defined in~\eqref{eq-projection}, are natural when $M$ ranges over
the basic manifolds $U$. Thus $\varepsilon: \T \to {\rm Id}$ is a
natural transformation between the endofunctors on $\Cat{BMan}_\infty$.
\end{lem}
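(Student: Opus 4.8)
The plan is to verify the naturality square directly, exploiting the fact that over basic manifolds the tangent functor is fully trivialized, so that $\varepsilon$ becomes a mere product projection. First I would record, using the trivialization of Lemma~\ref{def:trivialization}, that for a basic manifold $\la U,E\ra$ we have $\T U = U\times E$, and that under this identification the projection $\varepsilon_U\colon \T U\to U$ of Eqn.~\eqref{eq-projection} is precisely the first product projection $\pi_0\colon U\times E\to U$. This is immediate: the trivialization sends $[\varsigma]_\sim \mapsto \la \varsigma(0),\bfDk\varsigma(0)\cdot 1\ra$, and $\varepsilon_U[\varsigma]_\sim = \varsigma(0)$ is exactly its first coordinate.

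Next I would read off the arrow part of $\T$ from Proposition~\ref{prop-BMan}: for a morphism $f\colon \la U,E\ra\to\la V,F\ra$ one has
\[
\T f \;=\; \la f\circ\pi_{0},\ \bfDk f(\pi_{0})\cdot\pi_{1}\ra \;\colon\; U\times E \longrightarrow V\times F ,
\]
so that the first coordinate of $\T f$ is, by construction, $f\circ\pi_0$. Composing with the target projection then yields $\pi_0\circ\T f = f\circ\pi_0$, which is exactly $\varepsilon_V\circ\T f = f\circ\varepsilon_U$, i.e. the commutativity of the naturality square
\[
\begin{tikzcd}[ampersand replacement=\&]
\T U \arrow{r}{\T f}\arrow[swap]{d}{\varepsilon_U} \& \T V \arrow{d}{\varepsilon_V}\\
U \arrow[swap]{r}{f} \& V
\end{tikzcd}
\]
Equivalently, and without passing to coordinates, I could argue intrinsically, recalling that under trivialization $\T f$ corresponds to post-composition $[\varsigma]_\sim\mapsto[f\circ\varsigma]_\sim$: then $\varepsilon_V(\T f[\varsigma]_\sim) = \varepsilon_V[f\circ\varsigma]_\sim = (f\circ\varsigma)(0) = f(\varsigma(0)) = f(\varepsilon_U[\varsigma]_\sim)$, using only that $\varepsilon$ is evaluation at $0$.

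I expect no real obstacle here: the content of the lemma is simply that $\varepsilon$ is the base-point projection and that $\T f$ is engineered to preserve base points, so its first component is forced to be $f\circ\pi_0$. The one point worth stressing in the write-up is that the nontrivial second component $\bfDk f(\pi_0)\cdot\pi_1$ of $\T f$ plays no role: the chain rule and the functoriality established in Proposition~\ref{prop-BMan} are what guarantee that $\T f$ is a well-defined $\Cat{BMan}_\infty$-morphism, but the naturality of $\varepsilon$ itself collapses to the triviality $\pi_0\circ\la g,h\ra = g$. Since the square commutes for every basic morphism $f$, the family $\{\varepsilon_U\}$ is natural over $\Cat{BMan}_\infty$, giving the asserted natural transformation $\varepsilon\colon\T\Rightarrow{\rm Id}$.
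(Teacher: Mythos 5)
Your proof is correct and takes essentially the same route as the paper: the paper's proof also unfolds the trivialization $\T U \cong U\times E$, identifies $\varepsilon$ with the projection $\pi_0$, and observes that the first component of $\T f = \la f\circ\pi_0,\, \bfDk f(\pi_0)\cdot\pi_1\ra$ is $f\circ\pi_0$, so the naturality square commutes. Your additional intrinsic check via $[\varsigma]_\sim \mapsto [f\circ\varsigma]_\sim$ is a harmless bonus, though strictly speaking that description of $\T f$ is only introduced in the paper when extending $\T$ to general manifolds.
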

\begin{proof}
Unfolding the definitions
\[	\begin{tikzcd}[ampersand replacement=\&, column sep=2cm]
		U\times E \arrow{r}{\la f\pi_{0},\bfDk f\cdot \pi_{1}\ra}
		          \arrow[equals]{d}
		 \&
		V\times F \arrow[equals]{d}
		 \\
		\T U \arrow{r}{\T f}
		     \arrow{d}{\varepsilon}
		 \&
		\T V \arrow{d}{\varepsilon}
		  \\
		U  \arrow{r}{f}
		 \&
		V    
	\end{tikzcd}
\]
shows that projecting curves to their points
$\varepsilon_U\left([\gamma]_\sim\right) = \varsigma(0)$, when
restricted to basic manifolds $U$, boils down to the projections
$\varepsilon = \pi_0 : U\times E \to U$.
\end{proof}
Just another look at diagram~\eqref{diag-look} gives
\begin{cor}\label{corollary-counit}
The maps $\varepsilon_M: \T M \to M$ form a natural transformation
$\varepsilon: \T \to {\rm Id}$ between endofunctors on $\Cat{Man}_\infty$.
\end{cor}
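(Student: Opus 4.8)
The plan is to promote the naturality of $\varepsilon$ from the basic manifolds, where it was established in Lemma~\ref{lem:epsilon}, to arbitrary smooth manifolds, using the fact that $\T$, its action $\T f$, and the projections $\varepsilon$ are all assembled from their chartwise restrictions. So for a smooth morphism $f\colon M\to N$ the task is to check that the square with vertical legs $\varepsilon_M$ and $\varepsilon_N$ from~\eqref{eq-projection} and horizontal legs $\T f$ and $f$ commutes, i.e. $\varepsilon_N\circ \T f = f\circ\varepsilon_M$.

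First I would assemble the ingredients already in hand. By Proposition~\ref{prop-extension} the arrow part $\T f$ on $\Cat{Man}_\infty$ is given on representatives by~\eqref{eq-Tf}, and it restricts along any pair of charts $U\stackrel{\varphi}\mono E$ of $M$ and $V\stackrel{\psi}\mono F$ of $N$ with $f(U)\subseteq V$ to the basic-manifold formula $\la f\pi_0,\bfDk f\cdot\pi_1\ra$; and $\varepsilon_M$ restricts on each such chart to the first projection $\pi_0\colon U\times E\to U$, as in the proof of Lemma~\ref{lem:epsilon}. Diagram~\eqref{diag-look} records exactly these restrictions: its two side trapezoids commute by the open-submanifold compatibility~\eqref{diag-ext}, and its middle square is precisely Lemma~\ref{lem:epsilon}.

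The conclusion is then read off the outer rectangle of~\eqref{diag-look}. By Proposition~\ref{prop:Tis manifold} the cylinders $U\times E$ cover $\T M$ while the charts cover $M$, and two morphisms of manifolds that agree on every member of a cover coincide; hence the commutativity of every local instance forces the commutativity of the global square. As an immediate sanity check, on a representative one has $\varepsilon_N\big(\T f([\varsigma]_\sim)\big) = (f\circ\varsigma)(0) = f(\varsigma(0)) = f\big(\varepsilon_M([\varsigma]_\sim)\big)$, so the identity in fact holds pointwise irrespective of charts; the diagram only certifies that both composites are bona fide $\Cat{Man}_\infty$-morphisms glued from the local data.

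I expect no genuine obstacle. The one point requiring care is the gluing bookkeeping, namely that the chartwise restrictions of $\T f$ and of $\varepsilon$ are compatible over the overlaps $U_i\cap U_j$ so that they do patch into the global maps; this is guaranteed by the differentiability of the transition maps built into the atlas $\T\calA$ in Proposition~\ref{prop:Tis manifold}, while the well-definedness of $\varepsilon_M$ on equivalence classes is already secured by Lemma~\ref{lem-equiv}.
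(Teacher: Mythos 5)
Your proposal is correct and takes essentially the same route as the paper, whose entire proof is ``just another look at diagram~\eqref{diag-look}'': the outer rectangle of that diagram, with the side trapezoids commuting as in~\eqref{diag-ext} and the middle square given by Lemma~\ref{lem:epsilon}, is exactly the naturality square for $\varepsilon$ on $\Cat{Man}_\infty$. Your added pointwise check on representatives, $\varepsilon_N(\T f([\varsigma]_\sim)) = f(\varsigma(0)) = f(\varepsilon_M([\varsigma]_\sim))$, is a harmless confirmation of the same fact.
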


Proceeding along the same lines, we could further construct the natural transformation $\delta : \T \to \T\T$, first for
$\T$ as an endofunctor on $\Cat{BMan}_\infty$, where
$\T U \cong U\times E$, and thus
\bea\label{eq-coevaluation}
\delta\ :\ \T U & \to & \T\T U\\[0pt]    
<u,e> &\mapsto & <u,e,e,e > \notag
\eea
and then extending this along~\eqref{diag-ext} to $\T$ as an endofunctor on $\Cat{Man}_\infty$. Remarkably, though, the natural transformation $\T \tto \delta \T\T$ is coassociative, but not counitary with respect to $\T\tto \varepsilon {\rm Id}$ and therefore does not make $\T$ into a comonad. The reason is why it is not counitary, studied in more detail in~\cite{jubin:2012a} is that all natural transformations between nontrivial powers of $\T$ must preserve the zeros, as the trivial splitting of $\T M \tto \varepsilon M$. Vector fields over a manifold $M$ can be viewed as the general splittings of $\T M \tto \varepsilon M$, usually called \emph{cross sections}; but they do not embed into $\T M \tto \delta \T \T M$ as the cofree one, and they are thus not coalgebras of a monad, but just a special kind of coalgebras of the functor $\T : \Cat{Man}_\infty\to \Cat{Man}_\infty$. 

The whole process of functorializing tangent bundles, studied in the literature at many levels of generality and abstraction, can be repeated for cotangent bundles, leading to a cotangent bundle functor $\TCS:\Cat{Man}_\infty \to \Cat{Man}_\infty$ with a similar structure. While a tangent vector over a manifold $M$ is a $\sim$-equivalence class of curves $\varsigma:\RR\to M$, a cotangent vector is a $\sim$-equivalence class of differentiable functions $f:M\to \RR$. Just like vector fields can be presented as coalgebras $M\to  \T M$ such that the composite $M\to \T M \tto \varepsilon M$ is the identity, differential forms can be presented as coalgebras $M\to \TCS M$ such that the composite $M\to  \TCS M \tto \varepsilon M$ is the identity. Vector analysis is thus developed by using them together, and the cotangent bundle $\TCS M$ is usually defined as the dual of the tangent bundle $\T M$. At a closer look, it turns out that both the tangent and the contangent vectors arise by
\emph{testing}, in the sense of Sec.~\ref{sec:generaltesting}, of the differentiable functions $f:M\to \RR$ along the curves
$\varsigma:\RR\to M$. This provides a high level view of both constructions, which we present in the next section.


\section{Tangent and cotangent testing}\label{sec:tangenttesting}
\subsection{Manifolds as sheaves}\label{subsec:manifoldsassheaves}
Let $M$ be a manifold with a saturated atlas $\AAA_M$. The fact that
the atlas is saturated means that it can be presented as a \emph{presheaf},
i.e. a functor mapping each open set $U\subseteq M$ to the set of
charts over it, i.e.
\bea\label{eq-AAAM}
\AAA_M\ :\ \OOO(M)^{op} & \to & \Cat{Set}\\
U & \mapsto & \{U\tto \varphi E\}\notag
\eea
where $\OOO(M)$ is the set of opens in $M$, viewed as a partial order
the under inclusion\footnote{A partial order like $(\OOO(M), \subseteq)${}
is a category with at most one morphism $U\to V$, which exists just
when $U\subseteq V$}. If the model space $E$ is given with some chosen
coordinates $E\hookrightarrow \RR^X$, then every $k$ times
differentiable chart can be viewed as an $X$-tuple of $k$ times
differentiable maps $U\to \RR$, since there is the correspondence  
\[\prooftree
U\tto \varphi E\hookrightarrow \RR^X
\justifies
U\times X \tto{\varphi \times X} E\times X\to \RR
\endprooftree
\]
By viewing each $k$ times differentiable chart as an $X$-tuple of $k$
times differentiable functions to $\RR$, we can present the atlas $\AAA$
as the subfunctor of the sheaf of $k$ times differentiable functions:
\bea
\Cc^k(M)\ :\ \OOO(M)^{op} & \to & \Cat{Set}\\
U & \mapsto & \Cc^k(U)\notag
\eea
This is spelled out in~\cite[Sec.~II.3]{MacLane:Moerdijk:1992a}. When
$M$ is a second countable Hausdorff space, then all of the structure of
the manifold $M$ can be reconstructed from a subsheaf $\AAA \hookrightarrow \Cc^k(M)$.{}
Giving a $k$ times differentiable manifold on $M$ is equivalent to
giving a subsheaf of $\AAA\hookrightarrow \Cc^k(M)$ with the property
that $U$ embeds into $\RR^{\AAA U}$ for every $U$. Note, though, that
the atlases of the manifolds presented like this are not only saturated,
but also \emph{sheafified}, i.e. they contain not only all subcharts of
their charts, but also all unions of their charts that can be consistently
glued together~\cite[2.4]{TennisonB:Sheaf}. But since every saturated
atlas has a sheafification~\cite[Sec.~III.5]{MacLane:Moerdijk:1992a},
and the atlases that lead in this way to the same associated sheaf are
just different presentations of diffeomorphic manifolds, this leads
to no loss of generality. Sheaves thus provide a uniform presentation
of manifolds, eliminating the irrelevant implementation details which
allow different atlases for diffeomorphic, and thus indistinguishable
manifolds. This is, of course, not an accident, since the concept of
manifold was one of the guiding ideas behind the concept of
sheaf~\cite{Leray}.

The main technical advantage of the sheaf presentation is that manifolds
can now be studied in the rich environment of Grothendieck's
\emph{toposes}\/~\cite{SGA4}. This is where the tangent and cotangent
bundle functors will emerge from testing. 

The idea of a topos of sheaves $\Cat{Sh}(X)$ over a topological space
$X$ is that it is a \emph{generalized view}\/ of the space $M$. From
one angle, this view is supported by presenting sheaves as
\emph{continuosly variable sets}~\cite{LawvereFW:variable}, i.e. the
contravariant functors $\OOO(X)^{op} \to\Cat{Set}$ satisfying the sheaf
requirement (\cite[2.1]{TennisonB:Sheaf} or~\cite[Sec.~II.1]{MacLane:Moerdijk:1992a}),
analogous to the above requirement that all consistent charts are
glued together. The category $\Cat{Sh}(X)$ is then viewed as a
reflective subcategory of the functor category $\Cat{Set}^{\OOO(X)^{op}}$.
From a different angle, sheaves over a space $X$ are equivalently
presented as \emph{\'etale spaces}, i.e. local homeomorphisms
$E\to X$ (\cite[Sec.~2.3]{TennisonB:Sheaf} or~\cite[Sec.~II.6]{MacLane:Moerdijk:1992a}).
The functor category $\Cat{Sh}(X) \hookrightarrow \Cat{Set}^{\OOO(X)^{op}}$
is thus equivalent with the category of \'etale spaces $\EEE/X\hookrightarrow \Cat{Esp}/X$,
where $\Cat{Esp}$ is the category of topological spaces. This latter
presentation brings the sheaves over different spaces $X$ together in
the category $\Cat{Top}$ whose objects are \'etale spaces $E\tto p X$,
while the morphisms are commutative squares formed by the parallel
pairs of continuous maps $\la e, b\ra$ that make the square commute
in the next diagram.
\begin{align}
  \textrm{Obj:~}
  \begin{tikzcd} 
      E \arrow{d}[swap]{p} \\
      X
  \end{tikzcd}
  &&&
  \textrm{Mor:~}
  \begin{tikzcd}[ampersand replacement=\&] 
      E \arrow{d}[swap]{p} \arrow{r}{e} \& E' \arrow{d}{p'} \\
      X \arrow{r}{b} \& X'
  \end{tikzcd}
\end{align}
The category $\Cat{Top}$ can be construed of as a crude version of
Grothendieck's 'gros topos'~\cite[IV.2.5]{SGA4}, along the lines
of~\cite{LawvereFW:gros}. 

\begin{prop}
There is a faithful functor $\Cat{Man}_k \mono \Cat{Top}$ for every
$k$. It is full for $k=0$.
\end{prop}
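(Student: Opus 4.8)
The plan is to define $J : \Cat{Man}_k \to \Cat{Top}$ as the crude gros-topos embedding along the lines of~\cite{LawvereFW:gros}, under which a manifold enters $\Cat{Top}$ through its underlying space, realized as the terminal object of its own little topos $\Cat{Sh}(M)$. Concretely, on objects I put $JM = (M\to M)$, the identity local homeomorphism, which is étale because every homeomorphism is; on a $k$ times differentiable map $f : M\to N$ I put $Jf = \la f,f\ra$, the commutative square whose two horizontal legs are both $f$ and whose two vertical legs are identities. The square commutes trivially, and since $J(\mathrm{id}) = \la \mathrm{id},\mathrm{id}\ra$ is the identity of $(M\to M)$ and $J(g\circ f) = \la g\circ f, g\circ f\ra = Jg\circ Jf$, the assignment $J$ is a functor. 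I would stress at the outset that the elaborate sheaf presentation of the previous subsection is what supplies $\Cat{Top}$ as the ambient category, while the manifold itself is injected only through its underlying space.

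Faithfulness is then immediate and holds for every $k$: the base component of $Jf$ is literally $f$, and a morphism of $\Cat{Man}_k$ is completely determined by its underlying continuous function, so $f\mapsto Jf$ is injective on each hom-set. This step needs no hypotheses beyond the definition of $\Cat{Man}_k$.

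The substance is fullness at $k=0$, which I would obtain by computing the relevant hom-set of $\Cat{Top}$. A morphism $\la e,b\ra : (M\to M)\to(N\to N)$ consists of continuous $e,b : M\to N$ subject to $\mathrm{id}_N\circ e = b\circ \mathrm{id}_M$, which forces $e=b$; hence $\Cat{Top}(JM,JN)$ is in natural bijection with the continuous maps $M\to N$. Since $\Cat{Man}_0(M,N) = \Cc^0(M,N)$ is \emph{by definition} exactly the set of continuous maps, $J$ restricts to a bijection on hom-sets for $k=0$, i.e. it is full. The same computation gives $\Cat{Top}(JM,JN)\cong \Cc^0(M,N)$ for all $k$, while $\Cat{Man}_k(M,N) = \Cc^k(M,N)$ is a proper subset of $\Cc^0(M,N)$ for $k\ge 1$, so $J$ is then faithful but not full; this is why the statement singles out $k=0$.

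The step I expect to be the real obstacle is conceptual rather than computational: pinning down the object assignment. The tempting alternative, carrying the whole structure sheaf $\Cc^k(M)$ as the total space of an étale space over $M$, is \emph{not} covariantly functorial. Indeed, writing $F_E$ for the sheaf named by $E\to X$ and noting that a leg $e$ over $b$ factors through the pullback, one has $\Cat{Top}\big((E\to X),(E'\to X')\big)\cong \sum_{b}\Cat{Sh}(X)\big(F_E, b^{-1}F_{E'}\big)$, so a structure-sheaf object would demand a sheaf map $\Cc^k(M)\to b^{-1}\Cc^k(N)$, whereas a $k$ times differentiable $f$ moves functions by pullback and supplies only the opposite comorphism $b^{-1}\Cc^k(N)\to \Cc^k(M)$. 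Recognizing that the differentiable data is intrinsically contravariant, and that the only natural covariant étale assignment is therefore the terminal one $M\mapsto(M\to M)$, is the key move; once it is made, the embedding sees nothing finer than the underlying topology, which simultaneously yields faithfulness for all $k$ and pins fullness exactly to $k=0$.
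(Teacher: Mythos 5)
Your proposal is a correct proof of the statement as literally phrased, but it is a genuinely different construction from the paper's, and the difference matters. The paper's functor sends $M$ to the \'etale space $E\to M$ of the atlas sheaf $\AAA_M$ from~\eqref{eq-AAAM}, so that the image of a manifold in $\Cat{Top}$ still carries its charts; faithfulness is then argued from the fact that a manifold morphism is determined by its underlying continuous, fiber-preserving map. You instead send $M$ to the terminal \'etale space $\mathrm{id}:M\to M$, i.e.\ you factor through the underlying-space functor $\Cat{Man}_k\to\Cat{Esp}$ followed by the inclusion of $\Cat{Esp}$ into $\Cat{Top}$ by identity \'etale spaces. Your hom-set computation $\Cat{Top}(JM,JN)\cong\Cc^0(M,N)$ is right, and since $\Cat{Man}_0(M,N)=\Cc^0(M,N)$ this indeed gives faithfulness for every $k$ and fullness exactly at $k=0$. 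What your functor buys is an unambiguous, fully verified construction; what it loses is the content the proposition is meant to deliver: the following subsection needs each manifold to sit in $\Cat{Top}$ \emph{together with} its atlas sheaf --- the presheaves $\Sigma M$ and $\Theta M$ of Def.~\ref{def:GF} are built from $\AAA_M$, and the fibration $\Cat{Top}\to\Cat{Esp}$ with fibers the petit toposes $\Cat{Sh}(X)$ is exploited over the image objects --- so an embedding whose image objects are bare spaces could not be substituted downstream. On the other hand, your contravariance objection is a fair criticism of the paper's own sketch: the paper never spells out the morphism part, and a map of \'etale spaces over $f:M\to N$ amounts to a sheaf map $\AAA_M\to f^{-1}\AAA_N$, which charts do not naturally provide (differentiable data pulls back rather than pushes forward). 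In short, you validly establish the stated existence by a trivializing route, whereas the paper asserts, essentially without argument, the stronger and more useful claim that the atlas-sheaf presentation itself is covariantly functorial --- a claim your observation shows genuinely requires a nontrivial morphism-part construction rather than a one-line appeal to the definition.
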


\begin{proof}
Each manifold $M$ is presented as the \'etale space $E\to M$ corresponding
to the sheaf $\AAA_M$ from \eqref{eq-AAAM}. The fact that the manifold
morphisms are just the continuous maps that preserve the fibers follows
directly from the definition of $\AAA_M$ as tracking the charts over
$M$, and the construction of the points of $E\to M$ as the neighborhood
filters through these charts.
\end{proof}

\subsection{Paths and tests}
The upshot of the presentation of manifolds as sheaves is that the
category $\Cat{Man}$ can thus be viewed as a subcategory of
$\Cat{Top}$. Moreover, the forgetful functor $\Cat{Top}\rightarrow\Cat{Esp}$,
mapping each \'etale space $E \rightarrow X$ to its base space $X$, is
a fibration, whose fibers are equivalent to the 'petit toposes'
$\Cat{Sh}(X)$.  

Each of them contains the constant sheaf $\RR$ of real numbers. The
differentiable paths $\RR\to M$ and the differentiable tests $M\to \RR${}
can now be internalized as sheaves.
\begin{defn}\label{def:GF}
Given a $k$ times differentiable manifold $M$, let the presheaves
$\Sigma M, \Theta M: \calO M^{op} \rightarrow \Cat{Set}$ be defined by
\begin{eqnarray}\label{eq-F}
	  \Sigma M(U) &= &\Cat{Man}_{k}(\Yoneda U\times \AAA_M,\RR) \\
	  \Theta M(U) &= & \Cat{Man}_{k}(\Yoneda U\times \RR,\AAA_M) \label{eq-G}
\end{eqnarray}
Setting  $\G M = \Sigma M$ and  $\FS M = \F M = \Theta M$ determines the
object parts of the functors
  \[
 \G, \F \ :\ \Cat{Man}_k \to \Cat{Top} \qquad \mbox{and} \qquad  
 \FS\ : \  \Cat{Man}_k^{op} \to \Cat{Top}     \]
Given a $k$ times differentiable map $h\in \Cat{Man}_k(M,N)$, the arrow
parts will be defined as follows:
\begin{eqnarray}
         \G h(U)\ :\ \G M(U) & \rightarrow & \G N(U) \label{def:path} \\
  \varsigma & \mapsto & h\circ \varsigma \notag \\
          \FS h(U)\ :\ \FS N(U) & \rightarrow & \FS M(U) \label{def:test} \\
     t & \mapsto & t\circ <{\rm id}, h> \notag \\
         \F h(U)\ :\ \F M(U) & \rightarrow & \F N(U) \label{def:covartest}\\
    s & \mapsto &  \F h(U)_s(u,y) = 
    \bigvee_{\varepsilon\searrow 0} \bigwedge_{\Vert h(x)-y\Vert \leq \varepsilon} s(u,x) \notag 
\end{eqnarray}
where $\varsigma \in\Cat{Man}_k(\Yoneda U\times \RR, M)$ is a path in
$M$ whereas $t\in\Cat{Man}_k(\Yoneda U\times N, \RR)$ and
$s\in\Cat{Man}_k(\Yoneda U\times M, \RR)$ are tests over $x\in M$
and $y\in N$. 
  \end{defn}
\paragraph{Remark.} Since the inclusion
$\Cat{Sh}(X) \hookrightarrow \Cat{Set}^{\OOO(X)^{op}}$ has a left exact
left adjoint~\cite[Sec.~III.5]{MacLane:Moerdijk:1992a}\footnote{The topos
theory results that we use are contained in most topos theory textbooks.
The choice of the one that we keep referring to is entirely a matter
of personal habit, and the reader may wish to consult any of the other
presentations.} the exponents of $\Cat{Sh}(X)$ are created in
$\Cat{Set}^{\OOO(X)^{op}}$. For $A, B \in \Cat{Sh}(X)$, the exponent
is thus 
\bear
B^A (U) & =  & \Cat{Sh}(\Yoneda U \times A, B)
\eear
Comparing this with~\eqref{eq-F} and~\eqref{eq-G}, it is clear that
$\Theta M$ is a subsheaf of the sheaf $\RR^M$, whereas $\Sigma M$ is
a subsheaf of $M^\RR$, where we abuse notation and write $M$ for the
sheaf $\AAA_M$, representing the manifold $M$. 

\paragraph{Convention.} To simplify notation, when reasoning about
sheaves and their morphisms, we often leave the variation over the
open sets $U\subseteq M$ implicit. This is formally justified not only
for the global constructions, but also for the local constructions
that are expressible in the internal language of a
topos~\cite[Ch.~VI]{MacLane:Moerdijk:1992a}.

\begin{prop}
The inclusions
\[
\G M \hookrightarrow M^\RR \qquad \mbox{ and } \qquad 
     \FS M  \hookrightarrow \RR^M \]
are natural in $M$, where the functors 
\[(-)^\RR:\Cat{Man}_k \to \Cat{Top}\qquad\mbox{ and }\qquad \RR^{(-)}:\Cat{Man}^{op}_k \to \Cat{Top}
\] are obtained by restricting the exponentiation from the fibres of $\Cat{Top}$. 
\end{prop}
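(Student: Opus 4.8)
The plan is to verify that each inclusion is a natural transformation by showing that the functorial action of $\G$ (resp.\ $\FS$) is literally the (co)restriction, to the differentiable subsheaf, of the action of the exponential functor $(-)^\RR$ (resp.\ $\RR^{(-)}$). The naturality squares then commute by construction, and the only real content is that differentiability is preserved under the relevant composition.

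First I would spell out the morphism parts of the two exponential functors. By the exponent formula recalled in the Remark, $B^A(U)=\Cat{Sh}(\Yoneda U\times A,B)$, a map $h\in\Cat{Man}_k(M,N)$ (viewed in $\Cat{Top}$ through $M\mapsto\AAA_M$) acts on $M^\RR(U)=\Cat{Sh}(\Yoneda U\times\RR,\AAA_M)$ by post-composition, $\varsigma\mapsto h\circ\varsigma$, giving the covariant $h^\RR:M^\RR\to N^\RR$; and on $\RR^N(U)=\Cat{Sh}(\Yoneda U\times\AAA_N,\RR)$ by pre-composition, $t\mapsto t\circ\la{\rm id},h\ra$, giving the contravariant $\RR^h:\RR^N\to\RR^M$. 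The subtlety here is that $M^\RR$ and $N^\RR$ sit in the distinct fibres $\Cat{Sh}(M)$ and $\Cat{Sh}(N)$ of the fibration $\Cat{Top}\to\Cat{Esp}$, so one must check that these pointwise operations assemble into genuine $\Cat{Top}$-morphisms compatible with the base change that $h$ induces between fibres. This fibred bookkeeping is exactly what makes $(-)^\RR$ and $\RR^{(-)}$ into functors valued in $\Cat{Top}$, and I expect it to be the \emph{main obstacle}; under the convention that leaves the variation over $U$ implicit it collapses to the plain functoriality of composition.

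Granting this, the naturality is immediate. A section of $\G M$ over $U$ is a path $\varsigma\in\Cat{Man}_k(\Yoneda U\times\RR,\AAA_M)$, i.e.\ an element of $M^\RR(U)=\Cat{Sh}(\Yoneda U\times\RR,\AAA_M)$ that happens to be $k$-times differentiable; since the restriction maps of both presheaves are precomposition with $\Yoneda(V\subseteq U)\times\RR$, this exhibits $\G M\hookrightarrow M^\RR$ as a subsheaf, and dually $\FS M\hookrightarrow\RR^M$. Now \eqref{def:path} defines $\G h$ by the very formula $\varsigma\mapsto h\circ\varsigma$ that defines $h^\RR$, so $\G h$ is precisely $h^\RR$ restricted to $\G M$; hence the square whose horizontal edges are the inclusions $\G M\hookrightarrow M^\RR$ and $\G N\hookrightarrow N^\RR$, whose left edge is $\G h$ and whose right edge is $h^\RR$, commutes on the nose. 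Symmetrically, \eqref{def:test} defines $\FS h$ by $t\mapsto t\circ\la{\rm id},h\ra$, the same formula as $\RR^h$, yielding the dual commuting square for $\FS M\hookrightarrow\RR^M$.

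What remains --- and this is the genuine mathematical content --- is that these restrictions actually corestrict into the differentiable subsheaves $\G N\subseteq N^\RR$ and $\FS M\subseteq\RR^M$: one needs $h\circ\varsigma$ and $t\circ\la{\rm id},h\ra$ to be $k$-times differentiable whenever $h$, $\varsigma$ and $t$ are. This is nothing but closure of $\Cat{Man}_k$ under composition (and under products with identities), i.e.\ the chain rule already used in the proof of Proposition~\ref{prop-BMan}. Thus $\G h$ and $\FS h$ are exactly the corestrictions of $h^\RR$ and $\RR^h$ to the differentiable subsheaves, and the two inclusions are natural in $M$.
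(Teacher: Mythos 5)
Your proposal is correct and follows essentially the same route as the paper: the paper's entire proof is the single observation that unfolding the arrow part of the restricted exponentiation functors yields exactly the formulas \eqref{def:path} and \eqref{def:test}, so that $\G h$ and $\FS h$ are literally the (co)restrictions of $h^\RR$ and $\RR^h$ and the naturality squares commute by construction. Your extra care about the fibred bookkeeping over $\Cat{Esp}$ and about corestriction into the differentiable subsheaves (closure of $\Cat{Man}_k$ under composition) only makes explicit what the paper leaves implicit.
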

The proof of the preceding proposition boils down to unfolding the
arrow part of the restriction $\RR^{(-)} : \Cat{Man}^{op}_{k} \to \Cat{Top}$
of the exponentiation functors in the toposes $\Cat{Sh}(M)$, which is
identical to~\eqref{def:path} and~\eqref{def:test}. The next proposition,
however, requires that we define the covariant version
$\RR_\#^{(-)} : \Cat{Man}_{k} \to \Cat{Top}$, which is just the
exponentiation on the objects, but maps every differentiable
morphism $h\in \Cat{Man}_{k}(M,N)$ to the internal left adjoint of
$\RR^h$, i.e.
\bear
\RRD^h \dashv \RR^h &:& \RR^N\to \RR^M 
\eear

\begin{defn}
The covariant functor $\RR_\#^{(-)} : \Cat{Man}_{k} \to \Cat{Top}$ maps
each manifold $M$ to the sheaf $\RR^M$, and maps each
$h\in \Cat{Man}_{k}(M,N)$ to
\bea\label{eq-covarR}
\RRD^h(U) \ :\ \RR^M(U) & \to & \RR^N(U)\\
s & \mapsto & \RRD^h(U)_s(u,y) = 
    \bigwedge_{\varepsilon\searrow 0} \bigvee_{\Vert h(x)-y\Vert \leq \varepsilon} s(u,x) \notag 
\eea
\end{defn}

\begin{lem}\label{lem-above}
The direct images along the morphisms with nonzero differentials are
injective, modulo the approximation relation. More precisely, for every
$h\in \Cc^1(M,N)$ and every $c\in M$ such that $\bfDk h(c) \neq 0$ holds
\bea
\RR\left(\RRD(h)_t\right) & \underset c \sim& t \label{eq-R}\\
\TS\left(\TCS(h)_t\right) & \underset c \sim& t 
\eea
where we write $\RRD(h)$ for $\RRD^h$ and $\RR(g)$ for $\RR^g$.
\end{lem}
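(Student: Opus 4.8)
The plan is to read the two displayed relations as the assertion that the round trip $\RR^{h}\circ\RRD^{h}$ (respectively $\TS\circ\TCS$) is the identity \emph{to first order} at any point $c$ where $\bfDk h(c)\neq 0$, and to verify this directly against the limit definition of $\underset c\sim$ in Definition~\ref{def-approximate}; when both sides happen to be differentiable near $c$, Lemma~\ref{lem-equiv} reduces the check to matching values and differentials at $c$, which is the cleaner bookkeeping I would use whenever available. Throughout I would work in the fibre $\Cat{Sh}(M)$ over a single chart around $c$, as licensed by the Convention on the internal language, so that $M$ and $N$ may be treated as open subsets of their model spaces and the joins and meets of \eqref{eq-covarR} become ordinary real suprema and infima taken locally.

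First I would record the ``free'' half, which comes directly from the adjunction $\RRD^{h}\dashv\RR^{h}$. Unfolding the definitions gives
\[
\bigl(\RR^{h}\RRD^{h}t\bigr)(u,x)\ =\ \bigl(\RRD^{h}t\bigr)\bigl(u,h(x)\bigr)\ =\ \bigwedge_{\varepsilon\searrow 0}\ \bigvee_{\Vert h(x')-h(x)\Vert\le\varepsilon} t(u,x'),
\]
and since $x'=x$ always satisfies $\Vert h(x)-h(x)\Vert=0\le\varepsilon$, every inner supremum dominates $t(u,x)$; hence $\RR^{h}(\RRD^{h}t)\ge t$ pointwise. This is precisely the unit of the adjunction, and it holds with no hypothesis on $h$.

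The substance of the lemma is therefore the reverse estimate, and this is where $\bfDk h(c)\neq 0$ enters: a nonvanishing differential is what makes $h$ \emph{locally injective} at $c$ --- automatically so when the source is one-dimensional, as for the paths $\RR\to M$ that generate tangents, and otherwise via the immersion form of the local-rank theorem once $\bfDk h(c)$ is injective. On a small enough chart the fibre $\{x'\mid h(x')=h(x)\}$ then reduces to $\{x\}$, the sets $\{x'\mid\Vert h(x')-h(x)\Vert\le\varepsilon\}$ shrink to $x$ as $\varepsilon\searrow 0$, and the inner suprema converge to $t(u,x)$ by continuity of $t$, with the deviation controlled by the first-order Taylor remainder of $h$ against its linearization. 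A standard norm estimate bounds this deviation by $o(\Vert x-c\Vert)$, and combined with the unit inequality it yields $\bigl(\RR^{h}\RRD^{h}t\bigr)(u,x)-t(u,x)=o(\Vert x-c\Vert)$, which by Definition~\ref{def-approximate} is exactly $\RR\bigl(\RRD(h)_{t}\bigr)\underset c\sim t$, establishing~\eqref{eq-R}. The second equation I would prove by the identical argument carried out one level up, replacing the real-valued exponentials $\RR^{(-)},\RRD^{(-)}$ by the cotangent testing functors $\TS,\TCS$; alternatively it follows from the duality between the two endofunctors, since both the approximation relation and the nondegeneracy condition are self-dual.

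The main obstacle I anticipate is precisely this reverse estimate: controlling the genuinely non-pointwise operation $\bigwedge_{\varepsilon}\bigvee_{x'}$ to first order. Everything hinges on the local injectivity extracted from $\bfDk h(c)\neq 0$ --- without it, distant points $x'$ with $h(x')$ near $h(x)$ can inflate the supremum and destroy the approximation, so the hypothesis is not cosmetic but load-bearing, and one should expect to state it as injectivity of $\bfDk h(c)$ rather than mere nonvanishing outside the one-dimensional case. Two further technical points deserve care: the local-rank theorem must be invoked in the normed (possibly incomplete, possibly infinite-dimensional) setting, where one restricts to a chart small enough that the linear part of $h$ dominates its remainder; and one must confirm that the meets and joins computed in the fibre $\Cat{Sh}(M)$ agree with the ordinary real infima and suprema on the representable opens, which is what makes the analytic estimate above applicable to the sheaf-level statement.
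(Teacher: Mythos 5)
Your proposal is correct and follows essentially the same route as the paper's own proof: unfold $\RR^{h}\RRD^{h}t$ into the iterated meet--join, replace the constraint $\Vert h(x)-h(z)\Vert\le\varepsilon$ by its linearization at the point (valid modulo the approximation relation by differentiability of $h$), and use the nonvanishing differential to collapse the constraint sets to a point so that continuity of $t$ finishes the argument; the paper does exactly this in a two-line computation, proving only \eqref{eq-R} and leaving the $\TS/\TCS$ relation implicit, just as you do. Your extra bookkeeping --- the free unit inequality $\RR^{h}\RRD^{h}t\ge t$ from the adjunction, the $o(\Vert x-c\Vert)$ remainder estimate, and especially the observation that collapsing the sets really requires $\bfDk h(c)$ to be injective (bounded below) rather than merely nonzero once the domain is not one-dimensional --- is more careful than the paper's own argument, which writes the linearized constraint as the scalar condition $\bfDk h(z)\Vert x-z\Vert\le\varepsilon$ and thereby glosses over precisely the point you flag as load-bearing.
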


\begin{proof}[Proof of~\ref{lem-above}~\eqref{eq-R}] Omitting the
sheaf variation along $U\subseteq M$, as it plays no role here, for
all $z$ in the neighborhood of $c$ holds
\bear
\RR\left( \RRD(h)_t \right) (z) & = & \bigwedge_{\varepsilon\searrow 0}\ \  \bigvee_{\Vert h(x)-h(z)\Vert \leq \varepsilon} t(x)\ \ \underset z \sim\\
& \underset z \sim & \bigwedge_{\varepsilon\searrow 0}\ \  \bigvee_{\bfDk h(z)\Vert x-z\Vert \leq \varepsilon} t(x)\\ & = & t(z) 
\eear
\end{proof}

\begin{prop} \label{prop-covariang}
The inclusions
\[ \F M  \hookrightarrow \RRD^M \]
are natural in $M$ with respect to all \/ \emph{continuously}
differentiable morphisms $h:M\to N$.
\end{prop}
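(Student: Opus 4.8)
The plan is to deduce the covariant naturality of $\F M\hookrightarrow\RRD^M$ from the contravariant naturality of $\FS M\hookrightarrow\RR^M$ proved just above, by transposing across the fibrewise adjunction $\RRD^h\dashv\RR^h$. Since $\RRD^M$ is by definition the sheaf $\RR^M$, of which $\F M=\Theta M$ is a subsheaf, on objects the inclusion is the evident one, and the entire content of the statement is that the naturality square commutes, i.e.\ that $\F h(s)=\RRD^h(s)$ for every test $s\in\F M$ and every $h\in\Cc^1(M,N)$. Because $\F$ and $\RRD^{(-)}$ land in the fibres $\Cat{Sh}(M)$ of $\Cat{Top}\to\Cat{Esp}$, and morphisms of sheaves are detected on stalks, the convention of leaving the variation over open sets implicit lets me verify this identity locally, on the basic charts and at the germ of a point $c\in M$.

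First I would unfold the two arrow parts against each other. By definition $\F h(s)$ is the lower, $\liminf$-type envelope $\bigvee_{\varepsilon\searrow 0}\bigwedge_{\Vert h(x)-y\Vert\le\varepsilon}s(u,x)$, while $\RRD^h(s)$ is the upper, $\limsup$-type envelope $\bigwedge_{\varepsilon\searrow 0}\bigvee_{\Vert h(x)-y\Vert\le\varepsilon}s(u,x)$, so that $\F h(s)\le\RRD^h(s)$ is immediate and everything reduces to the reverse inequality. A test $s$ is continuous, being differentiable, so on a germ where $h$ is locally invertible the infimum and the supremum of $s$ over the shrinking preimages $\{x:\Vert h(x)-y\Vert\le\varepsilon\}$ must share a common limit. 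This is exactly where continuous differentiability enters: at a germ with $\bfDk h(c)\neq 0$ the map $h$ agrees with its linearization modulo $\underset c\sim$, which is the content of Lemma~\ref{lem-above}\eqref{eq-R}. I would use that lemma to replace the constraint $\Vert h(x)-y\Vert\le\varepsilon$ by the linear constraint $\bfDk h(c)\Vert x-c\Vert\le\varepsilon$ in both envelopes, whereupon continuity of $s$ collapses each to $s(c)$; equivalently, Lemma~\ref{lem-above} certifies that the unit of $\RRD^h\dashv\RR^h$ is the identity on tests modulo approximation, which is precisely the ingredient needed to transport naturality across the adjunction. The local identities then reassemble into a morphism of sheaves over the base, compatible with restriction by construction, so that the formal contravariant naturality of $\FS M\hookrightarrow\RR^M$, whose arrow part is \emph{literally} the defining formula for $\RR^{(-)}$, upgrades to the asserted covariant statement.

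The main obstacle, and the step I would spend the most care on, is the degenerate locus of $h$. Passing to germs already disposes of the global multiplicity of fibres — near $c$ only the single branch of $\{x:\Vert h(x)-y\Vert\le\varepsilon\}$ through $c$ is seen — but it does not by itself force the two envelopes together: the collapse to the common value $s(c)$ requires the linearization $\bfDk h(c)$ to pin down the entire preimage germ, that is, to be injective so that $h$ is an immersion at $c$. Where $\bfDk h(c)$ has a nontrivial kernel — for a projection such as $h(x_{1},x_{2})=x_{1}$, or wherever $\bfDk h(c)=0$ — the linear constraint $\bfDk h(c)\Vert x-c\Vert\le\varepsilon$ leaves a direction along which the continuous test $s$ still varies, and $\F h(s)$ and $\RRD^h(s)$ can genuinely differ. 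The equality asserted by the proposition must therefore be read in the sense in which the testing framework operates throughout, namely modulo the approximation relation $\underset c\sim$ in which Lemma~\ref{lem-above} is itself phrased and which is implicit in the germ-level, internal-language computations. Establishing that continuous differentiability supplies this reconciliation uniformly, and that the exceptional degenerate locus does not obstruct gluing the local comparisons into one global natural transformation, is where the real work lies.
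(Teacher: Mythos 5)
Your reading of the proposition is not the one the paper's proof supports, and under your reading the claim is actually false --- by your own example. The paper's proof establishes commutativity of the square \eqref{eq-sqr}, whose horizontal arrows are the testing transposes $\modeledby : \F M \to \RR^{\G M}$, $t \mapsto \bfDk(t\circ -)(0)$, and whose right-hand vertical is the direct image $\RRD^{\G h}$ along \emph{post-composition of paths} $\G h : \G M \to \G N$; this square, not yours, is the back face of \eqref{eq:prism} for which the proposition is invoked when constructing $\TCS f$. You instead take the horizontals to be the subsheaf inclusions $\F M \subseteq \RR^M$ and the right vertical to be $\RRD^h$, so that the entire content becomes the pointwise identity $\F h(s) = \RRD^h(s)$ between the two envelopes \eqref{def:covartest} and \eqref{eq-covarR} on $M$ itself. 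That identity genuinely fails for non-immersive $h$, exactly as you observe: for the projection $h(x_1,x_2)=x_1$ and the test $s(x_1,x_2)=\sin x_2$ one gets $\F h(s)\equiv -1$ while $\RRD^h(s)\equiv 1$. Your proposed repair --- reading the equality modulo the approximation relation --- cannot close this, because $f \underset c \sim g$ forces $f(c)=g(c)$ (otherwise the ratio in \eqref{eq-approximate} diverges), and the two constants above agree nowhere. So the difficulty you defer to your final paragraph is not residual bookkeeping: it is a counterexample to the statement you set out to prove, and no uniform argument can remove it.

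The paper's proof, by contrast, unfolds the two composites of \eqref{eq-sqr} and reduces naturality to the single identity
\begin{align*}
\bfDk\left(\bigwedge_{\varepsilon\searrow 0}\ \bigvee_{\Vert h(x)-\tau(0)\Vert \leq \varepsilon} t(x)\right)(0)
&=
\bigwedge_{\varepsilon\searrow 0}\ \bigvee_{\Vert h\left(\varsigma(0)\right)-\tau(0)\Vert \leq \varepsilon} \bfDk(t\circ \varsigma)(0)
\end{align*}
with the left-hand envelope taken over points $x\in M$ and the right-hand one over paths $\varsigma\in\G M$, which it then asserts to hold whenever $h$, $t$ and $\varsigma$ are continuously differentiable. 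The structural point you missed is that the differential $\bfDk(-)(0)$ sits \emph{outside} the envelope on one side and \emph{inside} it on the other, and that the envelopes live over path-functionals in $\RR^{\G M}$, never over $\RR^M$; nowhere does the argument compare $\F h$ with $\RRD^h$ as operations on functions on $M$. Note also that Lemma~\ref{lem-above} plays no role in the paper's proof of this proposition (it is used later, for the counit and comonad laws of $\TCS$), so your plan of transporting the contravariant naturality across the adjunction $\RRD^h \dashv \RR^h$ is a departure from, not a reconstruction of, the paper's route. To repair your attempt, you would have to replace your square by \eqref{eq-sqr} and verify the displayed identity directly.
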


\begin{proof}
The claim is that the square
\begin{equation}\label{eq-sqr}
\begin{tikzcd}[ampersand replacement=\&]
		\F M  \arrow{r}{\modeledby} 
		\arrow[swap]{d}{\F h} 
		\& \RR^{\G M}  \arrow{d}{\RRD^f}\\
\F N \arrow[swap]{r}{\modeledby}	
\&\RR^{\G N}
\end{tikzcd}
\end{equation}
commutes for every continuously differentiable $h$. Going down and
right gives
\bea\label{eq-one}
\left(\F h (t) \underset{N} \modeledby \tau \right) & = & \bfDk\left(\bigwedge_{\varepsilon\searrow 0} \bigvee_{\Vert h(x)-\tau(r)\Vert \leq \varepsilon} t(x)  \right)_r(0)
\eea
where $t\in \F M(U)$, $\tau \in \RR^{\G N}(U)$ and the function under
the differential operator varies over $r\in \RR$. We omit the variable
$u\in U$ on both sides, since the equation holds pointwise in it. Going
right and down in~\eqref{eq-sqr} gives
\bea\label{eq-two}
\RRD^h(U)_{\widehat t} (\tau) & = & 
  \bigwedge_{\varepsilon\searrow 0} \bigvee_{\Vert h\left(\varsigma(0)\right)-\tau(0)\Vert \leq \varepsilon} \widehat{t}(\varsigma)
\eea
where the supremum is taken over $\varsigma \in \G M$ and $\widehat t \in \RR^{\G M}$
is defined to be
\[
  \widehat t(\varsigma)\ \ =\ \  \left(t\underset M \modeledby \varsigma\right)\ \ =\ \ \bfDk(t\circ \varsigma)(0) 
\] 
Substituting $\varsigma$ into $\widehat t$ and putting~\eqref{eq-one}
and~\eqref{eq-two} together, the commutativity of~\eqref{eq-sqr} thus
boils down to
\bear
  \bfDk\left(\bigwedge_{\varepsilon\searrow 0} \bigvee_{\Vert h(x)-\tau(0)\Vert \leq \varepsilon} t(x)  \right)(0) & = & \bigwedge_{\varepsilon\searrow 0} \bigvee_{\Vert h\left(\varsigma(0)\right)-\tau(0)\Vert \leq \varepsilon} \bfDk(t\circ \varsigma)(0)
\eear
the first supremum is taken over $x\in M$ and the second one over
$\varsigma \in \G M$. This equation holds as soon as $h$, $t$ and
$\varsigma$ are continuously differentiable. 
\end{proof}

\paragraph{Remark.} Note that $\bigwedge_{\varepsilon\searrow 0} \bigvee_{\Vert h(x)-y \Vert \leq \varepsilon} g(x)$
is here just $\lim\sup_{y\rightarrow h(x)} g(x)$ and that the differential
of a real function $y:\RR\to \RR$ depending on $x$ is just the slope of
its tangent, i.e. the usual derivative $\bfDk y= \frac{dy}{dx}$.

\subsection{Tangent testing correlation}\label{subsec:ttcorelation}

We are now in the position to define a testing correlation like
in~\eqref{correlation} between the paths and the tests in a manifold $M$.
\begin{defn}\label{def:testing}
The \emph{testing correlation}\/ for differentiable manifold $M$ is the
morphism $\TT$ in $\Cat{Sh}(M)$
\bea\label{eq-def-correlation}
\Sigma M \times \Theta M & \tto \TT & \RR\\
<\RR\tto \varsigma M, M\tto t \RR > & \mapsto & \bfDk(t\circ \varsigma)(0)\notag
\eea
\end{defn}

\begin{lem}\label{lem-dinat}
The testing correlations are dinatural, in the sense that every
differentiable manifold morphism $f:M\to N$ makes the following
diagram commute.
\[
\begin{tikzcd}[ampersand replacement=\&]
		\G M \times \FS N 
		\arrow{rr}{\G f\times {\rm id}} 
		\arrow[swap]{d}{{\rm id}\times \FS f} 
		\&\& \G N \times \FS N \arrow{d}{\TT}\\
\G M \times \FS M 
\arrow[swap]{rr}{\TT}	
\&\&	 \RR
\end{tikzcd}
\]
\end{lem}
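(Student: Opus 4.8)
The plan is to verify the square by chasing a single generalized element $\langle\varsigma,t\rangle$ of the top-left corner $\G M\times\FS N$ around both of its legs and checking that the two results coincide on the nose. Because the correlation $\TT$ of Definition~\ref{def:testing} is assembled from the single composite $t\circ\varsigma$ followed by the operator $\bfDk(-)(0)$, and because $\G f$ acts on a path by post-composition while $\FS f$ acts on a test by pre-composition, the two legs will meet at one and the same function $\RR\to\RR$, so that commutativity reduces to the associativity of composition.

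Concretely, I would first unfold the three arrow parts in play. By~\eqref{def:path}, $\G f$ sends a path $\varsigma\colon\RR\to M$ to $f\circ\varsigma\colon\RR\to N$; by~\eqref{def:test}, $\FS f$ sends a test $t$ over $N$ to $t\circ\langle\mathrm{id},f\rangle$, which, suppressing the open-set variable $u\in U$ as licensed by the Convention above, is simply $t\circ f$; and by Definition~\ref{def:testing} the correlation is $\TT(\gamma,\tau)=\bfDk(\tau\circ\gamma)(0)$. Chasing $\langle\varsigma,t\rangle$ first right and then down yields $\bfDk\bigl(t\circ(f\circ\varsigma)\bigr)(0)$, whereas chasing it first down and then right yields $\bfDk\bigl((t\circ f)\circ\varsigma\bigr)(0)$. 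Since $t\circ(f\circ\varsigma)$ and $(t\circ f)\circ\varsigma$ are literally the same map, their differentials at $0$ agree and the square commutes.

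The genuine work, and the place where I expect the only real obstacle to sit, is bookkeeping rather than computation: one must check that this elementwise identity assembles into an honest commuting square of morphisms in $\Cat{Top}$ lying over the base map $f\colon M\to N$. I would therefore confirm that $\G f$ and $\FS f$ are morphisms of \'etale spaces over $f$, the first covariant and the second contravariant in the base --- here the differentiability of $f$ enters, through the chain rule, ensuring that $f\circ\varsigma$ and $t\circ f$ are themselves differentiable and hence genuine elements of $\G N$ and $\FS M$, so that both correlations are applicable --- that the two occurrences of $\TT$ are the fibrewise correlations living in $\Cat{Sh}(N)$ and $\Cat{Sh}(M)$, and that suppressing the variation over $U$ is legitimate because each construction is natural in $U$ and expressible in the internal language of the topos~\cite[Ch.~VI]{MacLane:Moerdijk:1992a}. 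Once this fibrational typing is pinned down, the opposite variances of $\G$ and $\FS$ in the manifold argument are precisely what force the statement into its \emph{dinatural} rather than natural form, while the associativity identity above supplies the required commutativity in each fibre.
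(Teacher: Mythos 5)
Your proposal is correct and takes essentially the same route as the paper: the paper's entire proof is the one-line computation $\TT\left(\varsigma, \FS f(t)\right) = \bfDk(t\circ f \circ \varsigma)(0) = \TT\left(\G f(\varsigma), t\right)$, which is exactly your element chase reducing commutativity to associativity of composition. Your additional bookkeeping (well-typedness of $\G f$ and $\FS f$, suppression of the variation over $U$) is sound care that the paper leaves implicit.
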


\begin{proof}
$\TT\left(\varsigma, \FS f(t)\right) = \bfDk(t\circ f \circ \varsigma)(0) 
 = \TT\left(\G f(\varsigma), t\right)$
\end{proof}

Recall from Sec.~\ref{subsec:testingRel} that $\TT(\varsigma, t)$ is
also written in the infix forms 
\begin{align*}
	  &\raisebox{0.5ex}{\begin{tikzcd}[ampersand replacement=\&]
		\Sigma M \arrow{r}{\models} \& \RR^{\Theta M} 
	 \end{tikzcd}} : \quad
	   \varsigma \mapsto \left(t \mapsto (\varsigma\models t)\right)
     \\
     &\raisebox{0.5ex}{\begin{tikzcd}[ampersand replacement=\&]
		\Theta M \arrow{r}{\modeledby} \& \RR^{\Sigma M} 
	 \end{tikzcd}} : \quad
	   t \mapsto \left(\varsigma \mapsto (t \modeledby \varsigma)\right)
\end{align*}
Lemma~\ref{lem-dinat} now immediately imples

\begin{cor}\label{cor-natural}
The testing correlations induce the natural transformations
\[ \G \tto{\ \models\ } \RR^{\FS} \qquad \mbox{and} \qquad \FS \tto{\ \modeledby\ } \RR^{\G}
\]
\end{cor}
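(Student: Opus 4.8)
The plan is to recognize that the two asserted naturality conditions are nothing more than the exponential transposes of the single wedge (dinaturality) identity already established in Lemma~\ref{lem-dinat}. Working fibrewise in the cartesian closed topos $\Cat{Sh}(M)$, transposing the testing correlation $\TT_M : \G M \times \FS M \to \RR$ across the product--exponential adjunction yields exactly the two infix maps: abstracting $\FS M$ into the exponent gives $\models_M : \G M \to \RR^{\FS M}$, while abstracting $\G M$ gives $\modeledby_M : \FS M \to \RR^{\G M}$. Since the adjunction is a bijection, these transposes are well defined and jointly recover $\TT_M$; the only thing left to prove is that each of them is natural in $M$.

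First I would fix a differentiable morphism $f : M \to N$ and spell out the naturality square for $\models$. Because $\G$ is covariant and $\FS$ contravariant, the exponential $\RR^{\FS}$ is covariant, with $\RR^{\FS f}$ acting by precomposition with $\FS f$, so the square to verify is
\[
\begin{tikzcd}[ampersand replacement=\&]
\G M \arrow{r}{\models_M} \arrow[swap]{d}{\G f} \& \RR^{\FS M} \arrow{d}{\RR^{\FS f}} \\
\G N \arrow[swap]{r}{\models_N} \& \RR^{\FS N}
\end{tikzcd}
\]
Chasing a path $\varsigma \in \G M$ down-then-right produces the function $t' \mapsto \TT_N\!\left(\G f(\varsigma),\, t'\right)$ on arguments $t' \in \FS N$, whereas right-then-down produces $t' \mapsto \TT_M\!\left(\varsigma,\, \FS f(t')\right)$. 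These two functions coincide precisely when $\TT_N(\G f(\varsigma), t') = \TT_M(\varsigma, \FS f(t'))$, which is verbatim the identity of Lemma~\ref{lem-dinat}.

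The square for $\modeledby : \FS \to \RR^{\G}$ is entirely symmetric: here $\RR^{\G}$ is contravariant, $\RR^{\G f}$ precomposes by $\G f$, and the same unfolding shows that both composites send $t \in \FS N$ to the function $\varsigma \mapsto \TT_M(\varsigma, \FS f(t)) = \TT_N(\G f(\varsigma), t)$. Thus both naturality squares collapse onto the single wedge identity, and the corollary follows immediately, exactly as the text signals.

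The hard part will be purely bookkeeping rather than computation: confirming the variances of $\RR^{\FS}$ and $\RR^{\G}$ and confirming that the exponential functor sends a morphism to precomposition in the appropriate slot, so that the two sides of each square align term-for-term with the two sides of the wedge condition. Because the exponentials are taken fibrewise in the toposes $\Cat{Sh}(M)$, I would additionally note, following the earlier proposition on restricting exponentiation from the fibres of $\Cat{Top}$, that the product--exponential transpose is itself natural in $M$; this naturality is precisely what upgrades the dinatural family $\TT$ into genuine natural transformations rather than a merely pointwise family of maps.
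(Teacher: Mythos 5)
Your proof is correct and follows exactly the paper's route: the paper derives Corollary~\ref{cor-natural} as an immediate consequence of Lemma~\ref{lem-dinat}, and your argument is precisely that deduction spelled out, transposing the dinatural correlation $\TT$ fibrewise and checking that both naturality squares unfold to the wedge identity. The extra bookkeeping on variances and on naturality of the exponential transpose is exactly what the paper leaves implicit in the word ``immediately.''
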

Proceeding as in~\eqref{factor}, we  define the 'observable behaviors'
of the paths in $\Sigma M$ and of the tests in $\Theta M$ by the
epi-mono factorizations in $\Cat{Top}$
\begin{align}\label{eq:GFepimono}
	\begin{tikzcd}[ampersand replacement=\&,column sep=small]
		\Sigma M \arrow{rr}{\models}
		       \arrow[two heads,swap]{rd}{e} 
		\& \& \RR^{\Theta M} \\
		\& 
		\T M \arrow[tail,swap]{ur}{m} 
		\&
	\end{tikzcd}
	&&
	 \begin{tikzcd}[ampersand replacement=\&,column sep=small]
		\Theta M \arrow{rr}{\modeledby}
		       \arrow[two heads,swap]{rd}{\overline{e}} 
		\& \& \RR^{\Sigma M} \\
		\& 
		\TS M \arrow[tail,swap]{ur}{\overline{m}} 
		\&
	\end{tikzcd}
\end{align}

\begin{prop}
If $M$ is a $k+1$ times differentiable manifold, then the sheaves
$\T M$ and $\TS M$ defined in~\eqref{eq:GFepimono} are $k$ times
differentiable manifolds: $\T M$ is just the tangent bundle of $M$,
whereas $\TS M$ is the cotangent bundle.
\end{prop}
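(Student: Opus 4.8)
The plan is to recognize the testing correlation as the duality pairing between tangent and cotangent vectors, and then to identify the two images in~\eqref{eq:GFepimono} with the classical bundles by computing the congruences induced by $\models$ and $\modeledby$. First I would unfold the chain rule on the definition~\eqref{eq-def-correlation}: for a path $\varsigma$ and a test $t$,
\[
\TT(\varsigma,t)\ =\ \bfDk(t\circ\varsigma)(0)\ =\ \bfDk t\big(\varsigma(0)\big)\cdot\big(\bfDk\varsigma(0)\cdot 1\big),
\]
so the observation depends on the path $\varsigma$ only through the base point $\varsigma(0)$ and the velocity $\bfDk\varsigma(0)\cdot 1$, and on the test $t$ only through its differential $\bfDk t$ at that base point. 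Everything downstream is a consequence of this single identity together with Lemma~\ref{lem-equiv}.

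For the tangent side, I would show that the congruence $\ker(\models)$ induced by $\models:\Sigma M\to\RR^{\Theta M}$ is exactly the approximation relation $\underset 0\sim$ on paths. The inclusion ``$\underset 0\sim\ \subseteq\ \ker(\models)$'' is immediate from the displayed identity and Lemma~\ref{lem-equiv}, since $\varsigma\underset 0\sim\chi$ forces $\varsigma(0)=\chi(0)$ and $\bfDk\varsigma(0)=\bfDk\chi(0)$. For the converse I would use a separation argument: working in a chart $U\tto{\varphi}E\hookrightarrow\RR^X$ at $\varsigma(0)$, the coordinate projections are tests whose differentials at the base point are the dual-basis covectors, so $\TT(\varsigma,x_i)$ recovers the $i$-th component of $\bfDk\varsigma(0)\cdot 1$; hence $\varsigma\models\,=\,\chi\models$ forces equality of velocities, while the base points agree because $\models$ is a morphism of sheaves over $M$ and therefore commutes with the \'etale projection onto $M$. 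Consequently the epi $e$ presents $\T M$ as the quotient $\Sigma M/\underset 0\sim$, which is precisely the tangent bundle of Definition~\ref{def-bundle}; its $\Cat{Man}_k$ structure, and the drop from $k+1$ to $k$ differentiability, are then supplied verbatim by Proposition~\ref{prop:Tis manifold}.

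The cotangent side is the formal dual. Here I would compute the congruence induced by $\modeledby:\Theta M\to\RR^{\Sigma M}$: two tests $t,s$ are identified iff $\bfDk t(p)\cdot v=\bfDk s(p)\cdot v$ for every base point $p$ and every admissible velocity $v$. Since in each chart an arbitrary $v\in E$ is realized as $\bfDk\varsigma(0)\cdot 1$ by the affine path $\varsigma(\tau)=p+\tau v$, this collapses to $\bfDk t(p)=\bfDk s(p)$ for all $p$; note that the correlation never sees the value $t(p)$, only its differential, which is exactly the equivalence defining cotangent vectors. Hence $\overline e$ presents $\TS M$ fibrewise as $\bfDk t(p)\in E^{*}$, i.e.\ as the cotangent bundle, and the dualization of Proposition~\ref{prop:Tis manifold} (with $\T U\cong U\times E$ replaced by $\TS U\cong U\times E^{*}$) supplies its $\Cat{Man}_k$ structure.

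The main obstacle is the forward direction of these two congruence computations, i.e.\ proving that there are \emph{enough} tests to separate distinct tangents and enough paths to separate distinct cotangents. Because we work with general normed model spaces rather than Banach spaces, one cannot invoke smooth bump functions or partitions of unity; the argument must instead rest on the linear coordinate tests furnished by the charts, whose differentials already separate points of $E$. The delicate case is the vanishing one, $\bfDk\varsigma(0)\cdot 1=0$: the distinct zero tangents at distinct points must \emph{not} be collapsed, and they are kept apart not by the correlation (which is then identically $0$) but by the sheaf/\'etale projection onto $M$ that every arrow in~\eqref{eq:GFepimono} respects. Handling this zero-section subtlety, and checking that the image formed in $\Cat{Top}$---where the epi--mono factorization is computed stalkwise after sheafification---agrees stalk by stalk with the charts of Proposition~\ref{prop:Tis manifold}, is the technical heart; once it is in place, the identification of $\T M$ and $\TS M$ with the tangent and cotangent bundles is forced.
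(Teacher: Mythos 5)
Your proposal is correct and takes essentially the same route as the paper's own proof: the paper reduces the tangent-side claim to the equivalence $\varsigma \underset 0 \sim \varsigma' \iff \forall t.\ \bfDk(t\circ \varsigma)(0) = \bfDk(t\circ \varsigma')(0)$ for curves through a common point, citing Lemma~\ref{lem-equiv}, and settles the cotangent side by appeal to duality of the two constructions. Your write-up just makes explicit what the paper leaves implicit in its two-sentence argument --- the chain-rule unfolding, the separation by chart-coordinate tests needed for the backward implication, and the zero-section/fibrewise subtlety resolved by working in sheaves over $M$ --- all of which are exactly the ingredients hidden in the paper's ``follows from Lemma~\ref{lem-equiv}'' and ``clear from the construction'' steps.
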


\begin{proof}
The claim that the quotient $\T M$ defined in~\eqref{eq:GFepimono} on
the left is the same as the tangent bundle $\T M$ defined in
Sec.~\ref{sec:manifolds} formally means that for any two differentiable
curves $\varsigma, \varsigma ' \in \Cc^{k+1}(\RR,M)$ through the
same point $\varsigma(0) = \varsigma'(0)$ holds
\bear
\varsigma \underset 0 \sim \varsigma' & \iff & \forall t\in \Cc^{k+1}(M,\RR).\ \bfDk(t\circ \varsigma)(0) = \bfDk(t\circ \varsigma')(0)
\eear
But this follows from Lemma~\ref{lem-equiv}.

The claim that the quotient $\TS M$ defined in~\eqref{eq:GFepimono} on
the right is the usual cotangent bundle manifold $\TS M$, i.e. that it
consists of the cotangent vectors, dual to the tangent vectors in $\T M$,
is clear from the construction of the quotient $\TS M$, which is dual
to the construction of $\T M$.
\end{proof}

\subsection{Main theorem}
\begin{thm}\label{thm-main}
The factorizations in \eqref{eq:GFepimono} determine the functors
 \[
 \T, \TCS \ :\ \Cat{Man}_\infty \to \Cat{Man}_\infty  \qquad \mbox{and} \qquad  
 \TS\ : \  \Cat{Man}_\infty^{op} \to \Cat{Man}_\infty    \]
with $\TCS M = \TS M$ on the objects, and such that
\begin{itemize}
\item $\T$ is a monad,
\item $\TCS$ is a comonad, and
\item $\TS$ is self-adjoint. 
\end{itemize}

\end{thm}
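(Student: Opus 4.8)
The plan is to obtain all three structures by transporting the self-dual semantic connection $\RR^{(-)}\dashv\RR^{(-)}$, carried by the dualizing object $\RR$ in each fibre $\Cat{Sh}(M)$, along the epi/mono factorizations \eqref{eq:GFepimono}. First I would settle functoriality. The arrows $\models$ and $\modeledby$ are natural by Corollary \ref{cor-natural}, and \eqref{def:covartest} together with Proposition \ref{prop-covariang} supplies the covariant comparison $\F M\hookrightarrow\RRD^M$; since the fibres of $\Cat{Top}$ are toposes, naturality of these comparisons makes the (regular epi, mono) factorization functorial, exactly as in Theorem \ref{thm:representation}(c,d). This defines $\T$ and $\TCS$ covariantly (through $\G$ and $\F$) and $\TS$ contravariantly (through $\FS$); that their images land in $\Cat{Man}_\infty$, and reproduce the tangent and cotangent bundles, is the Proposition preceding the theorem.

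For self-adjointness of $\TS$, the correlation $\TT$ of Definition \ref{def:testing} exhibits $(\Sigma M,\Theta M)$ as a polarity over $\RR$, whose two Galois-closed images are precisely $\T M$ and $\TS M$. The self-adjunction $\Cat{Top}(A,\RR^B)\cong\Cat{Top}(A\times B,\RR)\cong\Cat{Top}(B,\RR^A)$ of $\RR^{(-)}$ is symmetric in $A$ and $B$, and its double-dualization unit factors through the monos $m,\overline m$ of \eqref{eq:GFepimono} by the universal property of the image. Restricting unit and counit to these images yields a natural isomorphism $\Cat{Man}(M,\TS N)\cong\Cat{Man}(N,\TS M)$, i.e. $\TS^{op}\dashv\TS$; dinaturality (Lemma \ref{lem-dinat}) is exactly what makes the restriction natural in both variables.

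The monad and comonad are then the fibrewise linear structure of the bundles. For $\T$ I would take the unit $\eta\colon{\rm Id}\to\T$ to be the zero section -- the canonical splitting of $\varepsilon\colon\T\to{\rm Id}$ from Corollary \ref{corollary-counit} -- and the multiplication $\mu\colon\T\T\to\T$ to be the fibrewise addition of the two tangent directions of the second tangent bundle, which in a trivialization $\T\T U=U\times E^{3}$ sends $(u,a,b,c)\mapsto(u,a+b)$. The unit laws reduce to $(u,v,0,0)\mapsto(u,v)$ and $(u,0,v,0)\mapsto(u,v)$, and associativity to their $\T^{3}$-analogue. Dually, $\TCS$ carries the counit $\varepsilon\colon\TCS\to{\rm Id}$ (the bundle projection) and a comultiplication $\delta\colon\TCS\to\TCS\TCS$ replicating the covector in the manner of \eqref{eq-coevaluation}, but now built from the covariant pushforward $\RRD$ of Proposition \ref{prop-covariang} rather than from the naive tangent functor. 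The decisive point is that this covariant cotangent structure is no longer forced to preserve the trivial zero splitting that, as recalled after Corollary \ref{corollary-counit}, obstructs counitarity for $\T$: the sup/inf pushforward \eqref{eq-covarR} supplies the section that was missing, so $(\varepsilon,\delta)$ satisfies both comonad laws.

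The main obstacle is the descent of these structure maps to the factored images -- the \emph{nucleus} argument that $\T M\hookrightarrow\RR^{\Theta M}$ is closed under $\eta,\mu$ and $\TCS M\hookrightarrow\RR^{\Sigma M}$ under $\varepsilon,\delta$. Using Lemma \ref{lem-equiv} and the injectivity-modulo-approximation of Lemma \ref{lem-above}, this reduces to two representability facts: a fibrewise sum of linearizations of curves is again the linearization of a curve, and the replicated covector of the cotangent coevaluation again lies in the image of $\modeledby$. Regularity of $\Cat{Top}$ and preservation of the relevant weak limits, as in Theorem \ref{thm:representation}(c,d), then guarantee that the restricted maps are themselves manifold morphisms, completing the descent and with it the monad, comonad, and self-adjunction.
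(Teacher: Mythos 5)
Your proposal follows the paper's architecture for everything that is forced by the factorizations: the arrow parts of $\T$, $\TS$, $\TCS$ are obtained, as in the paper's prism diagrams \eqref{eq:prisms} and \eqref{eq:prism}, from the naturality statements (Corollary~\ref{cor-natural}, Proposition~\ref{prop-covariang}) plus orthogonality of the (epi, mono) factorizations \eqref{eq:GFepimono}; and your self-adjointness argument (symmetry of transposition across $\RR$, restricted along the monos, with Lemma~\ref{lem-dinat} giving binaturality) is the paper's unfolding of $f:M\to\TS N$ to $\widehat f:M\times N\to\RR$ in different words. Where you genuinely diverge is the (co)monad structure. You install the classical fibrewise formulas and then argue descent; the paper stays inside the testing presentation and descends path/test-level data through the factorization: its unit comes from the constant paths $\eta_\Sigma x(r)=x$, its multiplication from $\Sigma\varepsilon:\Sigma\T M\to\Sigma M$, its cotangent counit from the decomposition $\G M=\coprod_{x\in M}\Sigma_x M$ and $\TS M=\coprod_{x\in M}T^xM$ (using injectivity of $\RR$ in $\Cat{Sh}(M)$), and its comultiplication from $\FS\varepsilon$. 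Your route buys something real here: your $\mu$, the fibrewise sum $(u,a,b,c)\mapsto(u,a+b)$, is the tangent functor monad of \cite{jubin:2012a} and both unit laws are immediate, whereas the map the paper descends from $\Sigma\varepsilon$ is $\T\varepsilon$, i.e.\ $(u,a,b,c)\mapsto(u,b)$ in a chart, and reconciling it with the unital sum $+\circ\la p_{\T},\T p\ra$ that the paper itself invokes later is left implicit. The paper's route buys exactly what the theorem asserts, namely that the factorizations alone determine the structure, where you import the bundle projections through the identification with the classical bundles.

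The one materially weak step in your sketch is the cotangent comultiplication. ``Replicating the covector in the manner of \eqref{eq-coevaluation}'' does not type-check: in a chart, $\TCS\TCS U\cong(U\times E^{*})\times(E^{*}\times E^{**})$, so the last slot wants an element of $E^{**}$, and there is no canonical image of $\omega\in E^{*}$ there; and the claim that the sup/inf pushforward ``supplies the missing section, so $(\varepsilon,\delta)$ satisfies both comonad laws'' is an assertion, not an argument. The paper avoids both problems by defining $\delta$ as the map descended from $\FS\varepsilon_M:\FS M\to\FS\TCS M$ (precomposition of tests with the projection, giving $[t]\mapsto[t\circ\varepsilon]$) through the factorization, and then deriving naturality and the second counit law from Lemma~\ref{lem-above}, i.e.\ from the fact that pushforward after pullback along the surjection $\varepsilon$ is the identity modulo the approximation relation. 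Substituting that construction for your ``replication'' step closes the gap.
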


The proof of the theorem occupies the rest of this section.

\subsubsection{The arrow part of $\T$ and $\TS$}
To define the arrow part of the functors $\T$ and $\TS$, let $M$ and
$N$ be smooth manifolds, and $f\in \Cat{Man}_\infty (M,N)$ a smooth map
between them. Consider the following diagrams:
\begin{align}\label{eq:prisms}
	&\begin{tikzcd}[ampersand replacement=\&,column sep=small]
		\G M \arrow{rr}{\models}
		   \arrow[two heads,swap]{dr}
		   \arrow[swap]{dd}{\G f}
		   \&      \& 
		   \RR^{\FS M}
		     \arrow{dd}{\RR^{\FS f}} \\
		   \& \T M 
		      \arrow[tail,swap]{ru}
		      \& \\
		\G N \arrow[pos=0.75]{rr}{\models}
		   \arrow[two heads,swap]{dr}
		   \&      \& \RR^{\FS N} \\
		   \& \T N
		      \arrow[pos=0.75,crossing over,leftarrow,dashed]{uu}{\T f} 
		      \arrow[tail,swap]{ru}
		      \&    
	\end{tikzcd}
	&&&
	\begin{tikzcd}[ampersand replacement=\&,column sep=small]
		\FS M \arrow{rr}{\modeledby}
		   \arrow[two heads,swap]{dr}
		   \&      \& 
		   \RR^{\G M}
      \\
		   \& \TS M 
		      \arrow[tail,swap]{ru}
		      \&
		\\
		\FS N \arrow[pos=0.75]{rr}{\modeledby}
		   \arrow[two heads,swap]{dr}
		   \arrow{uu}{\FS f}
		   \&      \& \RR^{\G N} 
		    		     \arrow[swap]{uu}{\RR^{\G f}}  
		\\
		   \& \TS N
		      \arrow[pos=0.75,crossing over,dashed]{uu}{\TS f} 
		      \arrow[tail,swap]{ru}
		      \&    
	\end{tikzcd}
\end{align}
The back squares in both diagrams commute by Corrolary~\ref{cor-natural}.
The orthogonality of the monics and the epics in the factorizatons
in~\eqref{eq:GFepimono} then induces the unique maps $\T f$ and $\TS f$.
The functoriality of $\G$ and $\FS$ guarantees the functoriality of
$\T$ and $\TS$. 

\subsubsection{The arrow part of $\TCS$}
To define the arrow part of the covariant cotangent bundle
functor $\TCS$, consider
\begin{equation}\label{eq:prism}
	\begin{tikzcd}[ampersand replacement=\&,column sep=small]
		\F M \arrow{rr}{\modeledby}
		   \arrow[two heads,swap]{dr}
		   \&      \& 
		   \RRD^{\G M}
      \\
		   \& \TCS M 
		      \arrow[tail,swap]{ru}
		      \&
		\\
		\F N \arrow[pos=0.75]{rr}{\modeledby}
		   \arrow[two heads,swap]{dr}
		   \arrow[leftarrow]{uu}{\F f}
		   \&      \& \RRD^{\G N} 
		    		     \arrow[swap,leftarrow]{uu}{\RRD^{\G f}}  
		\\
		   \& \TCS N
		      \arrow[pos=0.75,crossing over,dashed,leftarrow]{uu}{\TCS f} 
		      \arrow[tail,swap]{ru}
		      \&    
	\end{tikzcd}
\end{equation}
This time, the back square commutes by Prop.~\ref{prop-covariang}, and
the functor thus depends on the direct images in the definitons of
$\F$ and $\RRD^{\G(-)}$. The uniqueness and the functoriality of the
definition of $\TCS f$ follow from the same reasons as above.

\subsubsection{The bundle map \ $\T \tto\varepsilon {\rm Id}$}\label{lem:Tepsilon}
The bundle projection $\varepsilon: \T M\to M$ is constructed using the
orthogonality of the monics and the epis again
\begin{align*}
	\begin{tikzcd}[ampersand replacement=\&]
		\G M \arrow[two heads]{r}{e} 
		   \arrow[swap]{dr}{\epsilon} 
		\& \T M \arrow[tail]{r}{m}
		        \arrow[dashed]{d}{\varepsilon}
		\& \RR^{\FS M} \arrow{d}{\RR^{\modeledby}_{!}}
		  \\
	  \&
		M \arrow[tail]{r}{m'}
		  \&
		\RR^{\RR^{\G M}}         
	\end{tikzcd}
\end{align*}
where the top is the definition of $\T M$, the diagonal map is the projection
\bea\label{eq-epsilon}
\epsilon(\varsigma) &= & \varsigma(0)
\eea
and the incusion $m'$ is defined as follows:
\begin{align*}
   \inferrule*{%
     \begin{tikzcd}[ampersand replacement=\&, column sep=1.5cm]
		 \RR^{\G M}\times M \arrow{r}{\RR^{\displaystyle\iota}\times M} \&
		 \RR^{M} \times M \arrow{r} \&
		 \RR
	  \end{tikzcd}	}{
	  \begin{tikzcd}[ampersand replacement=\&]
        M \arrow{r}{m'} \& \RR^{\RR^{\G M}}		     
	  \end{tikzcd}}
\end{align*}

The map $\RRD^{\modeledby}$ is the image of the testing correlation
along the covariant exponentiation functor from~\eqref{eq-covarR},
i.e.
\begin{align}\label{eq:Tcoevalualtion}
	\inferrule*{%
	  \begin{tikzcd}[ampersand replacement=\&]
		  \FS M \arrow{r}{\modeledby} \& \RR^{\G M}
	  \end{tikzcd}}{%
	  \begin{tikzcd}[ampersand replacement=\&]
		  \RR^{\FS M} \arrow[<-,transform canvas={yshift=.5ex}]{r}{\RR^{\modeledby}}
		           \arrow[swap,transform canvas={yshift=-.5ex}]{r}{\RR^{\modeledby}_{!}} \& \RR^{\RR^{\G M}}
	  \end{tikzcd}}
\end{align}

The natural transformation $\delta: \T M\to \T \T M$ can be derived using the orthogonality of the factorization: 
\begin{align*}
	\begin{tikzcd}[ampersand replacement=\&]
		\G M 
		\arrow[two heads]{rr}{e_M} 
		   \arrow[swap]{dd}{\gamma} \&\& 
		   \T M \arrow[tail]{rr}{m_M} \arrow[dashed]{dd}{\delta} \&\& 
		   \RR^{\FS M} \arrow{d}{\RRD^{\displaystyle\FS \epsilon}}\\
		   \& \&
		\& \&
		\RR^{\FS \G M} \arrow{d}{\RR^{\displaystyle\FS e_M}}
		  \\
\G \T M\arrow[two heads,swap]{rr}{e_{\T M}}	  \&\&
		\T \T M \arrow[tail,swap]{rr}{m_{\T M}}
		  \&\&
		\RR^{\FS \T M}         
	\end{tikzcd}
\end{align*}
where
\bear
\gamma(\varsigma)(y) &= & \varsigma  +\bfDk \varsigma(0) \cdot y
\eear
and $\epsilon$ is as in~\eqref{eq-epsilon}. This is not a comonad coevaluation, as it does not split $\varepsilon \T$.

\subsubsection{The monad structure \ $\T\T\tto \mu \T \oot \eta {\rm Id}$}
The monad structure on $\T$ is induced by 
\begin{itemize}
\item the bundle projections $\varepsilon: \T M\to M$ from Sec~\ref{lem:Tepsilon} and the
\item the 'candidate units' $\eta_\Sigma : M \tto \Sigma M$ where $\eta_\Sigma x (r) = x$
\end{itemize}
\bear
\begin{tikzcd}[ampersand replacement=\&,column sep=small]
\& M \arrow[dashed]{d}{\eta_{\T}} \arrow[swap]{dl}{\eta_\Sigma} \\
\Sigma M \arrow[two heads]{r} \& \T M \arrow[tail]{r} \& \RR^{\Theta M} \\
\Sigma \T M \arrow{u}{\Sigma \varepsilon} \arrow[two heads]{r} \& \T \T M \arrow[dashed,swap]{u}{\mu_{\T}} \arrow[tail]{r} \& \RR^{\Theta \T M} \arrow[swap]{u}{\RR^{\FS \varepsilon}}
	 \end{tikzcd}
	 \eear

%

\paragraph{Remark.} The monad structure over $\T$ has been spelled out in the
classical framework in \cite{jubin:2012a}, and studied in
of Synthetic Differential Geometry in~\cite{Cockett:Cruttwell:2012a}.

\subsubsection{The comonad structure \ $\TCS\TCS \oot \delta \TCS \tto \varepsilon {\rm \Id}$}
Towards the definition of the bundle structure map
$\varepsilon : \TCS M\to M$, define for each $x\in M$ the subsheaf
$\Sigma_x M$ of $\G M$ by setting
\bea
\Sigma_x M & = & \left\{\varsigma \in \G M\ |\ \varsigma(0) = x\right\}
\eea
Since $\G M$ thus decomposes into the coproduct
\bear
\G M & = & \coprod_{x\in M} \Sigma_x M
\eear
the cotangent bundle embeds into the product
\begin{align}\label{eq:TCS-prod}
	 \begin{tikzcd}[ampersand replacement=\&,column sep=small]
		\Theta M \arrow{rr}{\modeledby}
		       \arrow[two heads]{rrd}
	 \& \& \RR^{{\coprod \Sigma_z M}} \arrow{rr}{\simeq} \& \&  {\displaystyle \prod_{z\in M} \RR^{\Sigma_z M}}  \arrow[two heads,swap]{dd}{\pi_x}\\
		\& \&
		\TS M \arrow[tail]{urr} 
		\arrow[two heads,swap]{dd}{p_x}
		\\
		\&\&\&\& \RR^{\Sigma_x M} \arrow[bend right,tail,swap]{uu}{\iota_x}
		\\
		\& \& T^x M \arrow[tail]{urr}
	\end{tikzcd}
\end{align}
The quotient bundle $T^x M$ of $\TS M = \TCS M$ thus consists of the
equivalence classes of the tests on $M$ that are indistinguishable by
the paths through $x$. Note, however, that, since $\RR$ is an
injective object of $\Cat{Sh}(M)$, the projections
$\prod_{z\in M} \RR^{\Sigma_z M} \tto{\pi_x} \RR^{\Sigma_x M}$ split
by the injections $\iota_x \ :\ \RR^{\Sigma_x M} \tto{\iota_x} \prod_{z\in M} \RR^{\Sigma_z M}$.
The splitting means that $\pi_x\circ \iota_x = {\rm id}$ holds.
Intuitively, $\iota_x$ maps $\tau\in \RR^{\Sigma_x M}$ to the
family $\iota_x (\tau) = \left<\iota \tau_z\right>_{z\in M}$ where
$\iota \tau_z = \tau$ if $z=x$ and $\iota \tau_z = 0$ otherwise.
$T^x M$ is thus a submanifold of $\prod_{z\in M} \RR^{\Sigma_z M}$,
and hence
\begin{equation}
\begin{tikzcd}[ampersand replacement=\&]
  \Theta M
  \arrow{dr}
  \arrow[two heads]{d}  \\
 T^x M \arrow[tail]{r} \arrow[dashed,tail]{d} 
\& \RR^{\Sigma_x M} \arrow[tail]{d}{\iota_x} \\
\TS M \arrow[tail]{r} 
\&
{\displaystyle \prod_{z\in M} \RR^{\Sigma_z M}}
\end{tikzcd}
\end{equation}
Since the inclusions $T^x M\hookrightarrow \TS M$ are jointly
surjective, we have
\bear
\TS M & = & \coprod_{x\in M} T^x M
\eear
Recalling that $\TCS$ is the covariant version of $\TS$, i.e. that
$\TCS M=\TS M$ on the objects, we construct $\varepsilon : \TCS M\to M$
by projecting each $T^x M$ to $x\in M$. The coevaluation
$\delta:\TCS M\to \TCS\TCS M$ follows by using the contravariant
functors $\FS$ and $\RR^{\G}$.
\begin{equation}
\begin{tikzcd}[ampersand replacement=\&]
  \FS M
  \arrow[swap]{d}{\FS \varepsilon}
  \arrow[two heads]{r}  
  \&
 \TCS M \arrow[dashed,swap]{d}{\delta} \arrow[tail]{r} 
\& 
\RR^{\G M} \arrow{d}{\RR^{\G \varepsilon}} \\
\FS \TCS M \arrow[two heads]{r} 
\&
\TCS \TCS M \arrow[tail]{r} \& \RR^{\G\TCS M}
\end{tikzcd}
\end{equation}
The naturality and the comonad conditions for the families
\[
  M \oot \varepsilon \TCS M \tto \delta \TCS \TCS M
\]
follow from Lemma~\ref{lem-above}.

\paragraph{Remark.} Just like the tangent bundle functor allowed a 'coevaluation candidate' $\T \tto \delta \T\T$ which turned out to be degenerate, the cotangent bundle allows a degenerate 'evaluation candidate' $\TCS \TCS \tto \mu \TCS$. While both candidacies fail, they may be pointing towards a structure that still waits to be understood.

\subsubsection{$\TS$ is self-adjoint}
The claim is that there is a correspondence
\bear
\Cat{Man}_\infty(M, \TS N) & \cong & \Cat{Man}_\infty\left(N,\TS M\right)
\eear
natural in $M$ and $N$. By the definition in~\eqref{eq:GFepimono}, the
manifold $\TS N$ consists of the equivalence classes of tests in
$\RR^N$ indistinuishable by smooth paths in $N^\RR$. On the other hand,
two smooth morphisms $f,g\in \Cat{Man}_\infty(M, M')$ that are
indistinguishable by smooth paths in $M'$ must be equal. So a morphism
$f\in \Cat{Man}_\infty(M, \TS N)$, unfolded to a function
$\widehat f : M\times N\to \RR$, is a representative of the equivalence
classes modulo the same testing along each of the arguments. Although
this function is not a manifold morphism (and the category $\Cat{Man}_\infty${}
is not cartesian closed), it is also a representative of the corresponding
manifold morphism $f'\in \Cat{Man}_\infty(N,\TS M)$.

\subsection{Smooth semantic connection}\label{subsec:testdyn}

The structures described so far form a semantic connection in the sense
of Sec.~\ref{sec:generaltesting} as follows
\begin{itemize}
\item the category of smooth manifolds and smooth morphisms provides
      both the universe of 'spaces' to define systems, and the universe
      for 'types' to carry tests, i.e.
\[
  \SSS\ =\ \TTT\ =\  \Cat{Man}_\infty
\]
\item the 'model/predicate' adjuntion $M^{op}\dashv P : \Cal{S}^{op}\rightarrow \Cal{T}$
      is given by the cotangent bundle construction
\[
  (\TS)^{op}\dashv \TS : \Cat{Man}_\infty^{op}\rightarrow \Cat{Man}_\infty
\]
\item the `tests functor' $\Theta$ is the cotangent bundle functor $\TCS : \Man_\infty \to \Man_\infty$,
\item the `process functor' $\Sigma$ is the tangent bundle functor $\T :\Man_\infty \to \Man_\infty$,
\item the distributivity law $\lambda \,:\, \Theta P\to P\Sigma$ is derived as follows
\begin{equation}\label{eq:DitsLawDistance}
\prooftree
\prooftree
\RR^{\TS} \times \T \tto{{\rm id}\times \models} \RR^{\TS}\times \RR^{\TS} \tto{d} \RR
\justifies
\lambda' \,:\, \T \to \RR^{\RR^{T^*}} \epi \TS \TCS \TS
\endprooftree
\justifies
\lambda \,:\, \TCS \TS \to \TS \T
\endprooftree
\end{equation}
where $d(x,y) = \Vert x-y\Vert$ is the distance on the manifold, and
$\models$ is the testing correlation between the tangents and the
cotangents, induced as follows
\begin{equation}\label{eq-tantest}
	\begin{tikzcd}[ampersand replacement=\&]
		\Sigma M \arrow[bend left]{rrrr}{\models}
		\arrow[two heads]{rr}{e} 
		   \arrow[swap]{dd}{\eta} \&\& 
		   \T M \arrow[tail]{rr}{m} \arrow[tail,dashed]{dd}{\models} \&\& 
		   \RR^{\Theta M} \ar[leftrightarrow]{dd}[description]{=}\\
		  \\
\RR^{\RR^{\Sigma M}}\arrow[swap]{rr}{\RR^{\overline e}} \arrow[bend right,swap]{rrrr}{\RR^{\modeledby}}	  \&\&
		\RR^{\TS M} \arrow[tail,swap]{rr}{\RR^{\overline m}}
		  \&\&
		\RR^{\Theta M}         
	\end{tikzcd}
\end{equation}
\end{itemize}

\section{Examples and applications}\label{sec:Newton}
\subsection{Newtonian physics using coalgebras}

We work in $\Cat{Man}_\infty$ with smooth maps. A \emph{dynamical system} modeled on $M$ is a coalgebra $X : M \rightarrow \T M$. 
\begin{defn}
	The model category $\Cat{Dyn}$ of \emph{dynamical systems} has as
	objects dynamical systems, that is smooth sections $X : M \rightarrow \T M$
	of the tangent bundle, and as morphisms $\la f : M \rightarrow N, \T f\ra$
	such that
   \begin{align*}
		\begin{tikzcd}[ampersand replacement=\&]
		M \arrow{r}{f}
		  \arrow{d}{X}
		  \& 
		N \arrow{d}{Y} 
		\\
		\T M \arrow{r}{\T f}
		   \& 
		\T N
	\end{tikzcd}   
	\end{align*}
	commutes. The dynamical systems $X$ and $Y$ are called $f$-related.
	The identity and composition are induced from $\Cat{Man}_\infty$.	
\end{defn}
$\Cat{Dyn}$ is the category of $\T$-coalgebras $\Cat{Man}_{\T}$. 
The path category $\G\Cat{Man}$ is a full subcategory of $\Cat{Dyn}$
where the objects are in the form $R : I \rightarrow \T I$ for open intervals $I\subset\RR$.
We have $R(t)=(t,1)$ that is $\dot x=\textrm{d}x/\textrm{d}t=1$ represents
a clock\footnote{%
The clock vector field is denoted $R$ because the Greek word for clock is $\rho o\lambda \grave o\ddot\iota$.}
at unit rate, setting a unit time interval. $\T I\cong I\times \RR$ is
trivializable. We will assume that $I$ contains the point $0$.
\begin{defn}
	A trajectory of a dynamical system is a morphism $\gamma : R \rightarrow X$
	from the `clock` morphism $R : I \rightarrow TI$ to the dynamical system
	$X : M \rightarrow \T M$ such that
	\begin{align*}
		\begin{tikzcd}[ampersand replacement=\&]
		I \arrow{r}{\gamma}
		  \arrow{d}{R}
		  \& 
		M \arrow{d}{X} 
		\\
		\T I \arrow{r}{\T\gamma}
		   \& 
		\T M
	\end{tikzcd}   
	\end{align*}
	commutes.
\end{defn}
\paragraph{Remark.}
	$R$ describes the eigentime of a test particle (its internal clock)
	moving on the path. A suitable (monotone) map $R\mapsto R'$ would
	describe the movement of the test particle with a different clockrate,
	hence points towards special relativity. We stick to classical
	mechanics here.

From $X\circ\gamma = \T\gamma\circ R$ one obtains $(\gamma(t),\bfDk\gamma(t)\cdot 1)
=(\gamma(t), X(\gamma(t))\cdot 1)$ and hence a first order differential
equation $\dot\gamma(t)=X(\gamma(t))$ for $\gamma(t)$ with initial condition
$q=\gamma(0)$. Such a curve is an \emph{integral curve}. It can be
integrated for some parameter ranges and initial conditions. The integral curve
is unique on a maximal parameter range $I=(-a,a)$~\cite{Kolar:Michor:Slovak:1993a}.

From coalgebras $x : X \rightarrow \T X$ we get first order ordinary
differential equations of the prototypical form $\dot{x}(t)=f(x(t))$. A
formal solution is given by direct integration $x(t)=\int f(x(t)) = \Phi(x(t))$.
The integral operator is (circumstances permitting) a Fredholm operator
and allows an iterative solution (resolvents and Neumann series).
More prominently a solution to the ordinary differential equation is
a fix point for the integral operator $\Phi$, hence integral curves for
a vector field (flow) are \emph{fix points}.

\paragraph{Newtonian gravity.}
Our running example will be Newton's law of gravity. We use the following
notations: $m$ is the mass (constant non relativistic) of a body, $r$ its
position with base point $0$ in a Euclidean space 
$E=\RR^{3}\setminus \{0\}$, $\hat{r}_{12}=r_{12}/\vert r_{12}\vert$ is the
unit vector in direction of $r_{12}$, $v=\dot r = \textrm{d}r/\textrm{d}t$
is the velocity of a body, $p = \textrm{d}(m\,v)/\textrm{d}t =m\dot v$ the
(linear) momentum, and $a=\dot v$ the acceleration. We put a source (mass)
$m_{1}$ at the origin. The gravitational potential 
$\phi(r_{12})=G\frac{m_{1}}{\vert r_{12}\vert}$ (potential energy per
unit mass, zero at $\vert r_{12}\vert=\infty$) gives rise to the
gravitational force\footnote{%
Technically, by Newton's third law (see below), the source $m_{1}$ is
moved by the reaction force of the test mass $m_{2}$, and one uses the
center of mass and the reduced two body problem. 
If $m_{1}\gg m_{2}$ one can ignore this.}
$F(r_{12})=\la -\nabla m_{2}\phi(r_{12}),\hat{r}_{12}\ra \hat{r}_{12}
= -m_{2}\bfDk\phi(r_{12})$ on a test body $m_{2}$ at position $r_{12}$.
Here we use the relation between gradient and (directional) derivative
$(\nabla f(x))\cdot v = (\bfDk_{v})f(x) = \bfDk f(x)\cdot v$.
$F\in \Gamma \T X$ is a section of the tangent bundle and assigns to each
$r_{12}$ a linear curve ($i_{UX}$ is the inverse of a trivial chart)
\begin{align}
	&\begin{tikzcd}[ampersand replacement=\&]
		E \arrow{r}{F}
		 \& 
		\T X 
		\\
		U \arrow[tail]{u}{i_{UX}}
		  \arrow{r}{F_{U}}
		 \& 
		U\times \RR^{3}
		  \arrow[tail,swap]{u}{\la i_{UX}\circ\pi_{0},\bfDk i_{UX} \ra}
	\end{tikzcd}
	\\
	F_{U}&=\big\la r_{12}, -G\frac{m_{1}m_{2}}{\vert r_{12}\vert^{3}} r_{12}\big\ra
	      =\big\la r_{12}, -m_{2}\bfDk \phi(r_{12})\big\ra     
\end{align}

\paragraph{Newton's second law.}
With the notation from above, Newtonian physics is based on three postulates:
\begin{description}
	\item[\textbf{NP1:}] For zero external force, any body moves on a straight
	   line with constant velocity. That is $F=0 \Rightarrow \dot{v}=0$.
	\item[\textbf{NP2:}] The change of momentum is equivalent to the external
	   force acting on the body. The law of motion is $\dot{p}=F$ with
	   $p=mv$ and constant mass ($\dot{p}=m\dot{v})$.
	\item[\textbf{NP3:}] For any two bodies $m_{i}$, $1\leq i\leq 2$, their
	   interaction is such that the force $F_{12}$ (1 acts on 2) and $F_{21}$
	   (2 acts on 1) are summing to zero $F_{12}+F_{21}=0$. Action
	   equals reaction.   
\end{description}
Newton's second law implies
\begin{align}
	\label{eq:Lagrangian}
	\frac{\textrm{d}r}{\textrm{d}t}& = v
	&&&
	\frac{\textrm{d}v}{\textrm{d}t}& = \frac{1}{m}F(r)
	&&&&\textrm{Lagrangian or configuration space view}
	   \\
		\label{eq:Hamiltonian}
	\frac{\textrm{d}r}{\textrm{d}t}& = \frac{p}{m}
	&&&
	\frac{\textrm{d}p}{\textrm{d}t}& = F(r)
	&&&&\textrm{Hamiltonian or phase space view.}
\end{align}
The difference is that we look at different bundle structures. One can
switch between these views taking into account the `evaluation' (actually
using the Hamiltonian and the symplectic form)
\begin{align*}
  \TCS M \times \T M &\rightarrow \RR :: 
  (r,p)\times (r,v) \mapsto \frac{1}{2} p(v) = \frac{1}{2} \la m v,v\ra	
\end{align*}
by the Riesz representation theorem. It involves the mass of the test
body $m=m_{2}$ and describes the kinetic energy $E_{kin}$ of the test body.

For~\eqref{eq:Lagrangian} consider the tangent bundle $F = 
\RR^{3}\setminus\{0\}\times \RR^{3}=\T E$. Its points are states of
the system $(r,v)\in \T^{r}X$, and $\T X$ is called 
\emph{configuration space}. The configuration space is a manifold,
and the law of motion induces a curve in it, that is we get
(using the names for maps in a local chart)
\begin{align*}
	\begin{tikzcd}[ampersand replacement=\&, column sep=1.5cm]
		I 
		  \arrow{r}{r}
		  \arrow[swap]{d}{R=\la 1,\bfDk\ra}
		 \& 
		E
		  \arrow{r}{\la 1,v\circ r^{-1}\ra}
		  \arrow[swap]{d}{\la 1,v\circ r^{-1}\ra}
       \&
		\T E
		  \arrow{d}{\big\la \la \pi_{0},\pi_{1}\ra,\la \pi_{1},\frac{1}{m}F\circ\pi_{0} \ra\big\ra}   
		\\
		\T I 
		  \arrow[swap]{r}{\T r}
		 \&
		\T E
		  \arrow[swap]{r}{T\la 1,v\circ r^{-1}\ra}
		  \arrow[<->, dashed]{ru}
		 \&
		\T\T E
	\end{tikzcd}
\end{align*}
The commutativity of the left square $\T r\circ R = \la 1, v\circ r^{-1}\ra\circ r$
gives $(r(t),\frac{\textrm{d}r}{\textrm{d}t}(t)) = (\T r\circ R)(t) =
(\la 1, v\circ r^{-1}\ra\circ r)(t) = (r(t),v(t))$, that is the lhs
of~\eqref{eq:Lagrangian}. The commutativity of the right square
boils down to the commutativity of the lower triangle as the dashed
arrow is identity. Hence one gets $((r,v),(v,\frac{\textrm{d}v}{\textrm{d}t}))
= (\T\la 1, v \circ r^{-1}\ra)(r,v) = \big\la \la \pi_{0},\pi_{1}\ra,\la \pi_{1},
  \frac{1}{m}F\circ\pi_{0} \ra\big\ra (r,v)
= ((r,v),(v,\frac{1}{m}F(r)))$ and hence the rhs
of~\eqref{eq:Lagrangian}. A solution to this differential equation
for the flow of the gravitational field is an integral curve in the
double tangent bundle $F(r,v) = ((r,v),(v,\frac{1}{m}F(r)))$. The
canonical flip acts trivially on this vector field. 

For~\eqref{eq:Hamiltonian} consider the cotangent bundle $\TCS E = 
\RR^{3}\setminus\{0\}\times \RR^{3}$. Its points are states of
the system $(r,p)\in \TCS^{r}E$, and $\TCS^{r}E$ is called 
\emph{phase space}. The phase space is a manifold, and the law of motion
induces a curve in it, that is we get
\begin{align*}
	\begin{tikzcd}[ampersand replacement=\&, column sep=1.5cm]
		E
		  \arrow{r}{\la 1,\frac{p}{m}\circ r^{-1}\ra}
		  \arrow[swap]{d}{\la 1,\frac{p}{m}\circ r^{-1}\ra}
		 \&
		\TCS E
		  \arrow{d}{\big\la \la \pi_{0},\pi_{1}\ra,\la \pi_{1},F\circ\pi_{0} \ra\big\ra}
		\\
		\TCS E
   	  \arrow[swap]{r}{\T(\la 1,\frac{p}{m}\circ r^{-1}\ra)}
   	  \arrow[<->, dashed]{ru}
		 \&
		\T\TCS E
	\end{tikzcd}
\end{align*}
The corresponding vector field on phase space is given
in~\cite[p.240]{Abraham:Marsden:Ratiu:1988a}
and reads $F(r,p) = ((r,p),(p/m,F(r))) \in \T\TCS E$. Note that unlike
for the double tangent bundle $\T^{2}X$ we cannot expect the canonical
flip to act here trivially as a duality $\T X$ versus $\TCS X$ is
involved. 

\subsection{Other applications}
So far, this work focused on the correlations of tangents and cotangents over the \emph{same}\/ manifold. But the examples that arise in most applications involve two different manifolds: one to present the vector field, the other one to present the form on which it acts.
Interesting questions that open up in this direction include:

\paragraph{Testing and monoidal (multilinear) structures.}
The example of Newtonian gravity in Section~\ref{sec:Newton} describes
one test particle in an external gravitational field of a source
(mass). A natural question is to ask what happens when one looks at
many particle systems. In such a situation one needs to study paths
$\gamma_{i}(t)$ parameterized over an index set $I$. That is one looks
at products of coalgebra maps each describing a path for a single
particle in the presence of the others. Alternatively one can study
the path of a single system in a higher dimensional space. E.g. two
particles moving in $\RR^3$ can be described as single system moving
in $\RR^6$. Categorically this asks how the (co)tangent functors
interact with sums and products $\T(E + F)$ versus $\T E \otimes \T F$.
Moreover, looking at the monad evaluation $\T\T M \tto \delta \T M$ is
given by the sum $\mu=+\circ u_{M}=+\circ\la p_{T},\T p\ra$ of the two
projections from the double tangent bundle to the pullback bundle. The
canonical flip, describing the commutativity of partial derivatives,
and hence providing a local integrability condition, ensures that this
can be done. What really is needed is a monoidal structure on the
(co)tangent bundles which interacts properly with the lifting result
of the Representation Theorem~\ref{thm:representation}. This would
allow to study the categorical backbone of $k$-forms for $k\ge 1$. This
is important, as for example electromagnetism, and in general gauge
theories, are based on (curvature) 2-forms, e.g. the Maxwell tensor
$F$ is a closed 2-form.

\paragraph{Symplectic structure, general relativity.}
The testing framework yields in the differential geometry setting very
naturally the tangent $\T$, $\TCS$ and cotangent $\TS$ functors. On (finite)
linear fiber spaces there is a closed structure available. However, in
physical applications one uses also noncanonical identifications of
these spaces.

A \emph{symplectic form}\/ is an isomorphism $\omega : \T M \leftrightarrow \TS M : \Omega$.
Using the closed structure one has $\omega \in \TS M \wedge \TS M$.
This 2-form plays a fundamental role in dynamical systems, e.g.
transformations leaving this form invariant are \emph{symplectomorphisms}\/
inducing \emph{canonical transformations}\/ and do not change the nature
of a physical system. The symplectic form
provides also a description of time evolution of systems via the
Hamiltonian $H$. Let $H : M\rightarrow \RR$ be differentiable map, and
consider the 1-form $\rmd H$, which acts via the closed structure on
$Y\in \T M$. Then one can define a Hamiltonian vector field $X_{H} \in \T M$
via $\rmd H(Y) = \omega(X_{H},Y)$. The Hamiltonian vector field describes
the evolution of the system and implies the Hamiltonian form of the
equations of motion. The interaction of the symplectic form, that is
the closed structure, and the testing framework providing fixed point
operators should allow to study general aspects of dynamical systems
in a new way. 

Given a (locally Minkowski) metric $g$ on the manifold allows to put a
metric on the linear models $g : \T M \times \T M \rightarrow \RR$. Via
the Riesz representation theorem this identifies also $\T M$ and $\TS M$.
We have seen that dynamical systems arise from the lifting of the paths
functor $\Sigma$ to $\T$-coalgebras. Hence to make the metric a dynamical
object, one needs a similar lifting for the situation in the diagram of
the last paragraph with the metric identification of $\TS$ with $\T$
and wise versa. Such a dynamic involves curvature as we are now dealing
with higher forms, and it should be possible to employ the testing
framework in a general relativistic setup. Note also that the distributivity
law \eqref{eq:DitsLawDistance} depends on the distance function
$d : \RR^{\TS}\times \RR^{\TS} \rightarrow \RR$ allowing to include a metric.

\paragraph{Quantum mechanics.}
The testing framework does not choose bases, and also does not need in
principle finite dimensional (co)tangent spaces. It is hence general
enough to model infinite dimensional Hilbert spaces. Testing on such
spaces can no longer be done point wise (Heisenberg uncertainty) and
needs test functions. Using compactly supported or Schwartz test
functions destroys the closed structure on the vector bundles as duals
of such function spaces are strictly larger. Moreover, certain operators
like the momentum operator are no longer bounded (continuous). Restricting
the tests in the test category still allows to test systems, at the
cost to be able to detect only such behavior which is distinguishable
by the chosen tests. One may expect that a coalgebraic study of this
situation can reveal new insights not easy to gain by rather concrete
coordinate based methods.

\paragraph{Scalings in chaotic systems and quantum field theory.}
Another way to employ coalgebraic methods in dynamical systems comes from
studying fixed points and iterative solutions of processes. Following
iteratively a tangent vector field one obtains an approximation of
integral curves (numerical integration). Suppose $f : E \rightarrow E$
is such a map. Let $f=f^{1}$ have a fixed point $f(a)=a$, and define
$f^{n}=f\circ f^{n-1}$ the $n$-fold iteration of $f$. One
defines formally the \emph{iteration velocity}
$v(x) = \partial f^{n}(x)/\partial n\vert_{n=0}$. 
Let $g : E\rightarrow E$ be another function. In many applications one
is interested in the composition $g(f(x))$. Of course, $g(f(a))=g(a)$
for the fixed point. In nonlinear dynamical systems and in quantum field
theory at such fixed points the system enjoys a \emph{scale invariance}.
However, apart from it the system changes behavior for different scales.
Using the iteration velocity as a (tangent) vector field, one obtains a
solution for the composition problem
\begin{align*}
	g(f(x)) &= \exp\left( v(x)\frac{\partial}{\partial x}\right)g(x)
\end{align*}
The flow of this vector field is a renormalization flow of the system.
In quantum field theory $v(x)$ is the beta-function and determines the
`running' (varying) coupling strength at a different energy scales.
It seems plausible that techniques well established in computer science,
e.g. final coalgebras etc, when employed via testing, could help studying
these situations on a categorical level.

\section{Summary and future work}\label{sec:furtherideas}
Since its inception, category theory has played a crucial role in algebraic topology and algebraic geometry. There have many categorical approaches to vector calculus and to differential geometry, synthetic \cite{KockA:SDG} and classical \cite{kriegl:frolicher:1988}. There is a lot of current work, some of which we  mentioned in the introduction. To the wealth of ideas and techniques resulting from all that work, we add a relatively minor conceptual wrinkle: semantical connections of vector fields and differential forms, and testing one over the other. On one hand, it is just a matter of presentation: we just proposed a coalgebraic reconstruction of standard concepts. This points towards a new realm of research in coalgebra, but it is not obvious that this research leads towards some genuinely new results in vector calculus or differential geometry. On the other hand, a high level overview, abstracting away the irrelevant implementation details of a complicated mathematical theory can sometimes be even more useful than a particular mathematical result. This is not often the case, but it gave us a reason to pursue the ideas of smooth coalgebra, and of testing in vector analysis.

We started from the view of vector fields as coalgebras $M\to \T M$ splitting the tangent bundle projection $\T M\tto\varepsilon M$. This is just the coalgebraic form of the familiar cross section requirement, which assures that the corresponding vector fields are locally determined. Coalgebras not satisfying this requirement permit nonlocal interactions. Differential forms similarly arise as the crossections $M\to \TCS M$ of the cotangent bundle projection $\TCS M\tto \varepsilon M$. The Main Theorem opened up the alley towards testing vector fields and differential forms. But the actual explorations of that alley are left for the future work. 

So what did we actually do here? We defined the sheaves $\Sigma M$ and $\Theta M$, consisting respectively of the paths and of the tests over a manifold $M$, and established the canonical testing correlation between them $\Sigma M \times \Theta M \tto \TT \RR$, mapping a path $\varsigma: \RR\to M$ and a test $t:M\to \RR$ to $\bfDk(t\circ \varsigma)(0)$. Applying the testing factorization \eqref{eq:epimono} to $\TT$, we then derived the tangent bundle $\T M$, as the sheaf of paths indistinguishable under the tests, and the  cotangent bundle functor $\TS M$, as the sheaf of tests indistinguishable under the paths. As a byproduct of the construction, the testing correlation $\T M\times \TS M \tto \TT \RR$ emerged in \eqref{eq-tantest}, embedding  $\T M \hookrightarrow \RR^{\TS M}$ and $\TS M \hookrightarrow \RR^{\T M}$. Uncovering a duality of $\T M$ and $\TS M$ is in itself, of course, hardly surprising. Uncovering it through a testing correlation, akin to the 'Gedankenexperiments' whereby Turing machines give birth to complexity classes, is perhaps more interesting. Where does it lead?

In Sec.~\ref{subsec:testdyn}, we outlined a possible second application of the testing factorization \eqref{eq:epimono}, towards a coalgebraic framework for describing the families of smooth behaviors indistinguishable under a family of smooth tests. This leads towards a coalgebraic extension of Mackey's \emph{dual systems} \cite{MackeyG:dual}, studied in detail in \cite[Ch.~2-3]{GrothendieckA:TVS}, and axiomatized in \emph{Chu categories} \cite{BarrM:autonomous-book,BarrM:chu-history}. While the testing correlations considered in this paper were limited to the tangents and the cotangents on the same manifold $M$, the correlations considered in the practice are often between the tangents on one manifold, and the cotangents on another one. An interesting framework to further research in this direction seems to be the category $\Cat{STC}$, where
\begin{itemize}
\item the objects are the \emph{smooth testing correlations} 
\bear
\alpha & = &  \left(\alpha_\ast \times \alpha_\# \tto \TT \RR\right)
\eear
where
\begin{itemize}
\item $\T M \tto{\alpha_\ast} M$ is an algebra,
\item $N \tto{\alpha_\#} \TCS N$ is a coalgebra, and
\item the testing correlations $\TT$ make the following square commute
\[
\begin{tikzcd}[ampersand replacement=\&]
		\T M \times N 
		\arrow{rr}{{\rm id}\times \alpha_\#} 
		\arrow[swap]{d}{\alpha_{\ast}\times {\rm id}} 
		\&\& \T M \times \TCS N \arrow{d}{\TT}\\
M \times N 
\arrow[swap]{rr}{\TT}	
\&\&	 \RR
\end{tikzcd}
\]
\end{itemize}

\item a morphism $f: \alpha \to \beta$ is a pair $f = \left<f_\ast, f_\#\right>$ where 
\begin{itemize}
\item $\alpha_\ast \tto{f_\ast} \beta_\ast$ is an algebra homomorphism,
\item $\beta_\# \tto{f_\#} \alpha_\#$ is a coalgebra homomorphism, and
\item following square commutes
\[
\begin{tikzcd}[ampersand replacement=\&]
		M \times N' 
		\arrow{rr}{{\rm id}\times f_\#} 
		\arrow[swap]{d}{f_{\ast}\times {\rm id}} 
		\&\& M \times N \arrow{d}{\TT_\alpha}\\
M' \times N' 
\arrow[swap]{rr}{\TT_\beta}	
\&\&	 \RR
\end{tikzcd}
\]
where $M'$ is the underlying object of $\beta_\ast$ and $N'$ is the underlying object of $\beta_\#$.
\end{itemize}
\end{itemize}
The morphisms between certain integral operators as testing correlations express the Stokes-type of theorems (including the Fundamental Theorem of Calculus). The behaviors observable through such testing correlations can now be extracted by factoring out the separable and extensional objects of the category $\Cat{STC}$ \cite[Sec.~2]{BarrM:revisited}. The various algebraic invariants arise from manifolds through testing vector fields, just like the classes of languages arise through testing state machines. The question is whether this conceptual link is strong enough to support any technical hooks. 

\paragraph{Acknowledgements and apology.} We are grateful to the editors and to the anonymous reviewers for their patience and help.  The work presented here started from the connection of Mackey's dual systems and Chu categories, suggested to in a phone conversation with Bill Lawvere some 20 years ago, following the first author's presentation of the results of \cite{PavlovicD:chuI}. The approach subsequently evolved through concrete, domain specific applications in various research projects (on hybrid systems, control theory, embedded systems). The testing framework emerged from these applications. At best, the present paper brings us in the position to begin to study the original idea, sketched in the final section. While an elaboration in cartesian closed categories, along the lines of \cite{LawvereFW:smooth}, would surely have its advantages, we felt that it is important to tell the story in a language as standard as possible, as it may be of interest for several different communities  (coalgebraists, physicists, functional analysts \ldots). The balance of a presentation is, of course, always a matter of skill and taste, and we can only hope that our clumsiness will not completely obscure the clarity of the ideas that we attempted to present.

\addcontentsline{toc}{section}{References}
\bibliographystyle{plain}
\bibliography{TangentTesting,PavlovicD}
\begin{appendix}
\section{Norms and vector spaces}\label{app:Banach}

Manifolds and their relations to differential geometry and sheaves are
well studied. In preparing the paper, we used the books by MacLane and
Moerdijk~\cite{MacLane:Moerdijk:1992a}, and by Abraham, Marsden and
Ratiu~\cite{Abraham:Marsden:Ratiu:1988a}. The statements given without
proof are proved in these books. For further information about invariant
operations in differential geometry see~\cite{Kolar:Michor:Slovak:1993a}.
\emph{Smoothness} allows to approximate functions and such approximations
are characterized using norms (or if available inner products).
\begin{defn}
	A \emph{norm} of a real vector space $E$ is a mapping from $E$ into
	the reals $\Vert\cdot\Vert : E \rightarrow \RR :: e \mapsto \Vert e\Vert$
	such that:
	\begin{itemize}
		\item[\textsf{N1}] $\Vert e\Vert \ge 0$ for all $e\in E$, and
		   $\Vert e\Vert =0$ if and only if $e=0$ (positive definiteness).
		\item[\textsf{N2}] $\Vert\lambda e\Vert = \vert\lambda\vert\, \Vert e\Vert$,
		   for all $e\in E$ and $\lambda\in \RR$ (multiplicativity).
		\item[\textsf{N3}] $\Vert e_{1}+e_{2}\Vert \leq \Vert e_{1}\Vert
		   +\Vert e_{2}\Vert$ for all $e_{1},e_{2}\in E$ (triangle inequality).
	\end{itemize}
	$(E,\Vert\cdot\Vert)$ is a normed space. A norm complete space is called
	a \emph{Banach space}.
\end{defn}
A more special situation arises in presence of an inner product.
\begin{defn}
	A real \emph{inner product space} is a real vector space $E$ together
	with a mapping $\la \cdot,\cdot\ra : E \times E \rightarrow \RR ::
	(e_{1}, e_{2}) \mapsto \la e_{1},e_{2}\ra$ such that:
	\begin{itemize}
		\item[\textsf{I1}] $\la e,e_{1}+e_{2}\ra = \la e,e_{1}\ra + \la e,e_{2}\ra$,
		  (linearity, part 1).
		\item[\textsf{I2}] $\la e_{1},\lambda e_{2}\ra = \lambda \la e_{1},e_{2}\ra$,
		  (linearity, part 2).
		\item[\textsf{I3}] $\la e_{1},e_{2}\ra = \la e_{2},e_{1}\ra$,
		  (symmetry).
		\item[\textsf{I4}] $\la e,e\ra \geq 0$ and $\la e,e\ra=0$ if and only if $e=0$
		  (positive definiteness).  
	\end{itemize}
	Positive definiteness provides a non natural isomorphism
	$E \rightarrow \RR^{E}\cong E^{*}$ with the dual vector space $E^{*}$. If such
	an isomorphism exists the space is called \emph{reflexive}. A complete
	(w.r.t. a metric) reflexive inner product space is called \emph{Hilbert space}.
\end{defn}
The complex case does not pose any difficulty here, and one defines
Hermitian inner products using the complex involution analogously. An
inner product space is a normed space by setting $\Vert e\Vert^{2} =
\la e,e\ra$. 
\begin{defn}\label{def:metric}
	A \emph{metric} or \emph{distance} map on a set $E$ is a map $d_{E} : E \times E
	\rightarrow \RR$ such that
	\begin{itemize}
		\item[\textsf{d1}] $d(x,y)\ge 0$ non-negativity or separation axiom.
		\item[\textsf{d2}] $d(x,y)=0 \Leftrightarrow x=y$, coincidence axiom
		   (if not fulfilled $d_{E}$ is called pseudo metric).
		\item[\textsf{d3}] $d(x,y)=d(y,x)$ symmetry (if not fulfilled $d_{E}$
		    is called quasi metric).
		\item[\textsf{d4}] $d(x,z)\leq d(x,y)+d(y,z)$ triangle inequality.       
	\end{itemize}
\end{defn}
\textsf{d1} and \textsf{d2} state positive definiteness. An \emph{intrinsic metric}
$d_{M}$ on a manifold $M$ can be approximated by the length of paths 
connecting two points $x=\gamma(0)$, $y=\gamma(1)$ and $d_{M}(x,y) = 
\inf_{\gamma}\mathrm{length}(\gamma)$. Suppose $E$ has an addition
$+ : E\times E \rightarrow E$, e.g. a vector space structure, then $d$ is
\emph{translation invariant} if for all $x,y,a \in E$ holds $d(x,y)=d(x+a,y+a)$.
Any norm implies a metric by $d(x,y):=\Vert y-x\Vert$. A metric is
\emph{homogeneous} if for $r\in \RR$ and all $x,y\in E$ holds
$d(rx,r,y) = \vert r\vert d(x,y)$. A translation invariant homogeneous
metric defines a norm $\Vert x\Vert := d(x,0)$.
\begin{thm}[Riesz representation theorem]
  Let $E$ be an real inner product space. The map $e \mapsto \la\cdot,e\ra$
  is a linear norm preserving isomorphism of $E$ with $E^{*}$.	
\end{thm}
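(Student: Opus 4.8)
The plan is to study the map $\Phi : E \to E^{*}$ defined by $\Phi(e) = \la\cdot, e\ra$, showing in turn that it lands in $E^{*}$, is linear, is norm preserving (hence injective), and is surjective. The first three properties rely only on the axioms \textsf{I1}--\textsf{I4} together with the Cauchy--Schwarz inequality (itself a routine consequence of positive definiteness \textsf{I4}), whereas surjectivity carries the real content. First I would check well-definedness: for fixed $e$ the assignment $x \mapsto \la x, e\ra$ is linear in $x$ by \textsf{I1}--\textsf{I2} transported across the symmetry \textsf{I3}, and it is bounded because Cauchy--Schwarz gives $|\la x, e\ra| \le \Vert x\Vert\,\Vert e\Vert$, so $\Phi(e) \in E^{*}$ with $\Vert\Phi(e)\Vert_{\mathrm{op}} \le \Vert e\Vert$. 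Linearity of $\Phi$ in $e$, namely $\Phi(e_{1} + \lambda e_{2}) = \Phi(e_{1}) + \lambda\Phi(e_{2})$, is then immediate from \textsf{I1}--\textsf{I2}.

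For norm preservation I already have $\Vert\Phi(e)\Vert_{\mathrm{op}} \le \Vert e\Vert$; the reverse comes from evaluating $\Phi(e)$ at the unit vector $e/\Vert e\Vert$ when $e \ne 0$, which yields $\la e/\Vert e\Vert, e\ra = \Vert e\Vert$, so the supremum defining the operator norm is attained and equals $\Vert e\Vert$. Norm preservation gives injectivity for free: $\Phi(e) = 0$ forces $\Vert e\Vert = 0$, hence $e = 0$ by positive definiteness \textsf{I4}.

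The hard part, and the only genuine obstacle, is surjectivity. For a merely pre-Hilbert $E$ the image of $\Phi$ need not exhaust $E^{*}$, so the statement is really the Hilbert-space version (a complete reflexive inner product space in the terminology introduced above), and the argument cannot avoid invoking completeness. Assuming it, given a nonzero $f \in E^{*}$ I would use that $\ker f$ is a closed proper subspace and appeal to the orthogonal projection theorem — the one step where completeness is essential — to produce a unit vector $z$ orthogonal to $\ker f$, with $E = \ker f \oplus \RR z$. Setting $e = f(z)\,z$ then gives $\Phi(e) = f$, since both functionals vanish on $\ker f$ and agree at $z$; a linear functional is determined by its values on a spanning set, so they coincide on all of $E$. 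This exhibits every $f$ as $\la\cdot, e\ra$ and completes the proof that $\Phi$ is an isometric isomorphism.
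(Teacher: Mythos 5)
Your proof is correct, but there is nothing in the paper to compare it against: the theorem appears in the appendix as quoted background, and the appendix says explicitly that statements given there without proof are proved in the cited textbooks (Abraham--Marsden--Ratiu, MacLane--Moerdijk), so the paper supplies no argument of its own. Your argument is the standard one: Cauchy--Schwarz shows $\Phi(e)=\langle\cdot,e\rangle$ lands in $E^{*}$ with $\Vert\Phi(e)\Vert_{\mathrm{op}}\le\Vert e\Vert$, evaluation at $e/\Vert e\Vert$ gives the reverse inequality (hence injectivity), and surjectivity comes from the orthogonal projection theorem applied to the closed hyperplane $\ker f$, producing a unit vector $z\perp\ker f$ so that $e=f(z)\,z$ maps to $f$. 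Your one substantive observation is also the right one to make: surjectivity genuinely requires completeness, so the statement as phrased --- for an arbitrary real inner product space --- is false in infinite dimensions, and only the isometric embedding part survives in that generality. This is consistent with the paper's own terminology (it reserves the name Hilbert space for complete reflexive inner product spaces) and with how the theorem is actually used in the body of the paper (to identify tangent and cotangent fibers over $E=\RR^{3}\setminus\{0\}$, where the fibers are finite dimensional and completeness is automatic), but strictly speaking the hypothesis of the stated theorem should say Hilbert space rather than inner product space; your proof repairs that defect rather than inheriting it.
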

In other words, every linear form in $E^{*}$ can be realized as an inner
product with some fixed vector $e\in E$. We denote by $\Lin(E,F)$ the
linear morphisms from $E$ to $F$.

\section{Differential}\label{subsec:derivative}
 
	If $f : U\subset \RR^{n} \rightarrow \RR^{m}$ is differentiable, then choosing
	coordinates (bases), gives $f$ explicit as an $m$-tuple of
	functions in $n$ coordinates $(y_{1}=f^{1}(x_{1},\ldots,x_{n}),$
	$\ldots, y_{m}=f^{m}(x_{1},\ldots,x_{n}))$. The differential $\bfDk f(x)$
	at a point $x_{0}\in U$ is then given by the Jacobian matrix $J(f)\simeq \bfDk f$
	\begin{align*}
		\bfDk f(x_{0}) 
		  &:=
		  \left[\begin{array}{ccc}
		    \frac{\partial f^{1}(x)}{\partial x_{1}} & \ldots & \frac{\partial f^{1}(x)}{\partial x_{n}} \\
		    \vdots & & \vdots \\
		    \frac{\partial f^{m}(x)}{\partial x_{1}} & \ldots & \frac{\partial f^{m}(x)}{\partial x_{n}} 
		  \end{array}\right]_{x=x_{0}}
	\end{align*}
	For $m=1$ one has $y=f(x_{1},\ldots,x_{n})$ and the differential becomes
	\begin{align*}
		\bfDk f
		&= 
		  \left[\kern-1ex\begin{array}{ccc}
		    \frac{\partial f}{\partial x_{1}} & \ldots & \frac{\partial f}{\partial x_{n}}
		  \end{array}\kern-1ex\right],
		&&\textrm{applied to $e=(e^{1},\ldots,e^{n})^{t}$}&&&
		  \bfDk f(x)\cdot e &= \sum_{i=1}^{n} \frac{\partial f}{\partial x_{i}} e^{i}.
	\end{align*}
	As an example for an iterated differential $\bfD^{2}\kern-0.5ex f \in \Lin^{2}(\RR^{n},\RR)
	\cong \Lin(\RR^{n}\otimes \RR^{n},\RR)$, with $f: U\subset \RR^{n} \rightarrow \RR$
	one obtains
	\begin{align*}
		{\bfD}^{2}\kern-0.5ex f(x_{0})
		&= 
		\left[ \frac{\partial^{2} f(x)}{\partial x_{i}\partial x_{j}} \right]_{x=x_{0}}.
	\end{align*}
From this it is easy to conclude that the differential is a linear
operator on the space of sufficiently often differentiable functions. 

The tangent construction $\T$ is algebraically and geometrically simpler that $\bfD$. Think of $(u,e)$
as a vector $e$ in $E$ with base point $u$, then $\T f(u,e)=(f(u),\bfDk f(u)\cdot e)$
is the image vector in $F$ with base point $f(u)$. The map $\T$ is functorial
while $\bfD$ suffers from a morphism present to change the base point
successively. To see this let $f : U\subset E\rightarrow F$ and
$g : V\subset F \rightarrow G$ be composable $C^{1}$ maps, then
we get the
\begin{thm}(Composite mapping theorem)\label{thm:compositemapping}
 With $f,g$ as above, $g\circ f : U\subset E\rightarrow G$ is also $C^{1}$.
 Moreover
 \begin{align}
	 \T(g\circ f) &= \T(g) \circ \T(f) \nn
	 \bfD(g\circ f)(u)
	   &=
	 \bfD\kern-0.4ex g(f(u))\cdot \bfDk f(u)
	  &&&\textrm{(chain rule).}  
 \end{align}  
\end{thm}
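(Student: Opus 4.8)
The plan is to prove the chain rule for the Fréchet differential of Definition~\ref{def-differential}; the functoriality $\T(g\circ f) = \T g\circ \T f$ then follows as in the Remark after Proposition~\ref{prop-BMan}, since the arrow part of $\T$ is assembled pointwise from the differentials. First I would restate differentiability at a point in the equivalent remainder form. Writing $v = f(u)$, $A = \bfDk f(u)$, and $B = \bfDk g(v)$, Definition~\ref{def-differential} is equivalent to
\[
f(x) - f(u) = A(x-u) + r_f(x), \qquad g(y) - g(v) = B(y-v) + r_g(y),
\]
where $\Vert r_f(x)\Vert = o(\Vert x-u\Vert)$ as $x\to u$ and $\Vert r_g(y)\Vert = o(\Vert y-v\Vert)$ as $y\to v$. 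Substituting $y = f(x)$ and using linearity of $B$ gives
\[
g(f(x)) - g(f(u)) = (B\circ A)(x-u) + B\bigl(r_f(x)\bigr) + r_g\bigl(f(x)\bigr),
\]
so the candidate differential is $B\circ A = \bfDk g(f(u))\cdot \bfDk f(u)$, and the entire content is to show the last two terms are $o(\Vert x-u\Vert)$.

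Next I would dispatch the two remainder terms. For $B(r_f(x))$ I use that a differential is by construction a bounded (continuous) linear operator, so $\Vert B(r_f(x))\Vert \le \Vert B\Vert\,\Vert r_f(x)\Vert = o(\Vert x-u\Vert)$. For $r_g(f(x))$ I first record the auxiliary estimate $\Vert f(x)-f(u)\Vert \le (\Vert A\Vert + o(1))\,\Vert x-u\Vert = O(\Vert x-u\Vert)$, which again follows from the remainder form and boundedness of $A$. Then I introduce a single scalar control function $\epsilon(y) = \Vert r_g(y)\Vert/\Vert y-v\Vert$ for $y\neq v$ and $\epsilon(v)=0$, so that $\Vert r_g(y)\Vert = \epsilon(y)\,\Vert y-v\Vert$ for all $y$ and $\epsilon(y)\to 0$ as $y\to v$. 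Factoring
\[
\frac{\Vert r_g(f(x))\Vert}{\Vert x-u\Vert} \;\le\; \epsilon\bigl(f(x)\bigr)\cdot \frac{\Vert f(x)-v\Vert}{\Vert x-u\Vert}
\]
and letting $x\to u$, the first factor vanishes by continuity of $f$ at $u$, while the second stays bounded by the $O$-estimate. Combining, $g\circ f$ is differentiable at $u$ with $\bfDk(g\circ f)(u) = B\circ A$; since $u$ is arbitrary and $u\mapsto \bfDk g(f(u))\circ \bfDk f(u)$ is a composite of continuous maps into $\Lin$, the differential depends continuously on $u$, so $g\circ f\in \Cc^1(E,G)$.

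The one genuinely delicate point, and where I would be most careful, is the behaviour of $r_g(f(x))$ at points $x\neq u$ with $f(x)=v$. The naive route of dividing $g(f(x))-g(v)$ by $f(x)-v$ collapses exactly there, which is the classical trap in chain-rule arguments. Routing the remainder through the single continuous $\epsilon$ with $\epsilon(v)=0$ sidesteps any division: when $f(x)=v$ one has $r_g(f(x))=r_g(v)=0$ automatically, so $\Vert r_g(f(x))\Vert = \epsilon(f(x))\,\Vert f(x)-v\Vert$ holds trivially and the product estimate is unaffected. With the chain rule established, functoriality is immediate: unfolding $\T f(u,e)=\la f(u),\bfDk f(u)\cdot e\ra$ and applying it reproduces verbatim the computation already displayed in the proof of Proposition~\ref{prop-BMan}, yielding $\T(g\circ f)=\T g\circ \T f$.
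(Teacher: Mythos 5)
Your proof is correct, but there is no proof in the paper to compare it against: Theorem~\ref{thm:compositemapping} appears in the appendix, which opens by declaring that statements given without proof are taken from the cited textbooks (Abraham--Marsden--Ratiu), and in the body of the paper the chain rule is simply invoked in the proof of Prop.~\ref{prop-BMan} with the parenthetical ``proved in the usual way.'' What you have supplied is precisely that usual way: the remainder-form decomposition $g(f(x))-g(f(u)) = (B\circ A)(x-u) + B(r_f(x)) + r_g(f(x))$, and the scalar control function $\epsilon$ that correctly defuses the classical trap at points $x\neq u$ with $f(x)=f(u)$, where naive division by $\Vert f(x)-f(u)\Vert$ breaks down. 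Two small remarks. First, your appeal to boundedness of the differentials deserves a one-line justification in this setting, since Def.~\ref{def-differential} only says ``linear'': it is legitimate because the differential is only defined for $f\in\Cc(X,Y)$, and the limit condition together with continuity of $f$ at $c$ gives $\Vert \bfDk f(c)\cdot h\Vert \leq \Vert f(c+h)-f(c)\Vert + \epsilon\Vert h\Vert \to 0$ as $h\to 0$, so the differential is continuous at $0$ and hence bounded. Second, the paper's $\Cc^1(X,Y)$ (Def.~\ref{def-differential}) asks only that the differential exist at every point, so your closing argument that $u\mapsto \bfDk g(f(u))\circ\bfDk f(u)$ is continuous is more than the statement literally requires, though it matches the classical $C^1$ notion of the cited references and is harmless. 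The functoriality half of your argument reproduces verbatim the computation the paper performs in the proof of Prop.~\ref{prop-BMan}, so nothing is missing there.
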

The equation for $\bfD$ is the \emph{chain rule} and reads in coordinates
as follows:
\begin{align}
	\frac{\partial(g\circ f)^{j}(x)}{\partial x^{i}}
	&=
	\sum_{k=1}^{n} \frac{\partial g^{j}(f(x))}{\partial y^{k}} \,\frac{\partial f^{k}(x)}{\partial x^{i}}
   &&&& i= 1,\ldots, m,\quad y^{k}=f^{k}(x).
\end{align}
The chain rule implies the Leibniz rule if applied to $B\in \Lin(F_{1}\otimes F_{2},G)$
and map $f_{1}\times f_{2} : U\times U\rightarrow F_{1}\times F_{2}$.
\begin{thm}(Leibniz rule)
	Let $f_{i} : U\subset E\rightarrow F_{i}$ for $i=1,2$ be of class $C^{1}$,
	and $B\in \Lin(F_{1}\otimes F_{2},G)$. Then the mapping $B(f_{1},f_{2}) =
	B\circ (f_{1}\times f_{2}) : U\subset E\rightarrow G$ is of class $C^{1}$
	and
	\begin{align*}
		\bfDk \,(  B(f_{1},f_{2}))(u)\cdot e
		&=
		  B(\bfDk f_{1}(u)\cdot e,f_{2}(u))
		+ B(f_{1}(u),\bfDk f_{2}(u)\cdot e)		
	\end{align*}
\end{thm}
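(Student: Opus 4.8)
The plan is to obtain the Leibniz rule as a direct corollary of the chain rule (Theorem~\ref{thm:compositemapping}), applied to the factorization $B(f_1,f_2) = B\circ \la f_1, f_2\ra$, where $\la f_1, f_2\ra : U \to F_1\times F_2$ is the pairing $u\mapsto (f_1(u), f_2(u))$. The argument then splits into three largely independent pieces: the differentiability of the pairing $\la f_1, f_2\ra$, the differentiability of the bilinear map $B$, and the composition of the two.

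First I would check that $\la f_1, f_2\ra$ is of class $\Cc^1$, with $\bfDk \la f_1, f_2\ra(u)\cdot e = \la \bfDk f_1(u)\cdot e,\, \bfDk f_2(u)\cdot e\ra$. This is routine from Definition~\ref{def-differential}: relative to $\Vert e\Vert$, a pair of increments in $F_1\times F_2$ tends to zero precisely when each component does, so the two approximation conditions witnessing the differentials of $f_1$ and $f_2$ assemble into the single approximation condition witnessing the differential of the pairing.

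The heart of the matter is the second piece: a continuous bilinear $B\in \Lin(F_1\otimes F_2, G)$ is differentiable, with $\bfDk B(a,b)\cdot(h,k) = B(h,b) + B(a,k)$. I would prove this by expanding, using bilinearity, $B(a+h,\, b+k) - B(a,b) = B(h,b) + B(a,k) + B(h,k)$. The first two summands are exactly the candidate linear operator in $(h,k)$, so all that remains is to see that the remainder $B(h,k)$ is negligible, i.e. that it vanishes faster than $\Vert(h,k)\Vert$. This is precisely where the continuity of $B$, rather than mere bilinearity, does the work: continuity of a bilinear map is equivalent to the existence of a constant $C$ with $\Vert B(h,k)\Vert \le C\,\Vert h\Vert\,\Vert k\Vert$, whence $\Vert B(h,k)\Vert \le C\,\Vert(h,k)\Vert^2$ under any reasonable product norm. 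I expect this boundedness estimate to be the one genuine obstacle, since every other step is formal, and it is the single spot where the norm axioms are actually invoked.

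Finally, I would assemble the pieces through the chain rule. Composing yields $\bfDk\big(B\circ\la f_1,f_2\ra\big)(u)\cdot e = \bfDk B\big(f_1(u),f_2(u)\big)\cdot \la \bfDk f_1(u)\cdot e,\, \bfDk f_2(u)\cdot e\ra$, and substituting the formula for $\bfDk B$ from the previous step produces $B(\bfDk f_1(u)\cdot e,\, f_2(u)) + B(f_1(u),\, \bfDk f_2(u)\cdot e)$, which is the asserted identity. The $\Cc^1$ regularity of $B(f_1,f_2)$ comes for free, since each ingredient is $\Cc^1$ and Theorem~\ref{thm:compositemapping} guarantees that the class $\Cc^1$ is closed under composition.
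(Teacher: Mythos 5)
Your proposal is correct and takes essentially the same approach as the paper: the paper's own argument is just the one-line remark that the chain rule (Theorem~\ref{thm:compositemapping}) yields the Leibniz rule when applied to $B\in \Lin(F_{1}\otimes F_{2},G)$ and the map $f_{1}\times f_{2}$. You have simply supplied the details that the paper delegates to its cited references, namely the differentiability of the pairing and the differentiability of the continuous bilinear map $B$ via the boundedness estimate $\Vert B(h,k)\Vert \le C\,\Vert h\Vert\,\Vert k\Vert$.
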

For one dimensional real vector spaces $B$ is just multiplication in
$\RR$, hence $\bfDk\,(f_{1}f_{2})(u) = \bfDk\,(f_{1}(u))f_{2}(u) + f_{1}(u)\bfDk f_{2}(u)$,
where $e=1$ is the unit vector tangent to the real line.

The tangent map can be applied to itself, hence one can define the
second tangent map $\T^{2}f = \T(\T f)$. Let $f : U\subset E\rightarrow F$,
then the second tangent map is given locally in terms of differentials as
\begin{align}\label{eq:TTf}
	\T^{2} f &: (U\times E) \times (E\times E) \rightarrow (F\times F)\times (F\times F) \\
	&(u, e_{1}, e_{2}, e_{3}) \mapsto
	 (f(u), \bfDk f(u)\cdot e_{1},
	  \bfDk f(u)\cdot e_{2}, {\bfD}^{2}f(u)\cdot (e_{1},e_{2}) + \bfDk f(u)\cdot e_{3})
	  \notag 	
\end{align}
To see this, recall that
\begin{align*}
	\T f(u,e) &= (f(u),\bfDk f(u)\cdot e).
\end{align*}
Applying $\T$ to $\T f$ yields
\begin{align*}
	\T(\T f)(u, & e_{1},e_{2},e_{3})
	 =
	(\T f(u,e_{1}) , \bfDk\, (\T f)(u,e_{1})\cdot(e_{2},e_{3})\,) \nn
	&= (f(u), \bfDk f(u)\cdot e_{1},
	    \bfDk\,\left[ (f(u), \bfDk f(u)\cdot e_{1}) \right]\cdot(e_{2},e_{3}) \,) \nn
	&= (f(u), \bfDk f(u)\cdot e_{1}, \bfDk f(u)\cdot e_{2} ,
	     {\bfDk\,}^{2}\kern-0.4ex f(u)\cdot(e_{1},e_{2})+ \bfDk f(u)\cdot e_{3}\,)
\end{align*}
where in the last equality the identity $\bfDk\,[1]=\pi_{1}$ has been
used and the Leibniz rule was applied.

To finish the setup, we consider differentials in a particular direction.
A \emph{differentiable path} is a class $C^{1}$ map from an open interval
$\gamma : I\subset \RR \rightarrow E$ into $E$, with differential
$\bfDk\, \gamma(t) \in \Lin(I,E)$. $\Lin(\RR,E)$ is identified with $E$
via $\bfDk\, \gamma(t)$ acting on $e=1$. The differential of the curve is
\begin{align*}
	\frac{\textrm{d}\gamma}{\textrm{dt}}(t) 
	&=
	\bfDk\, \gamma(t)\cdot 1 \,.
\end{align*}
Composing a $C^{1}$ curve $\gamma : I \rightarrow E$ with a $C^{1}$ map
$f : E\rightarrow F$ and using the chain rule yields
\begin{align*}
   \frac{\textrm{d}}{\textrm{dt}}(f(\gamma (t))) 
	&=
	\bfDk (f\circ \gamma)(t)\cdot 1
	 =
	\bfDk f (\gamma(t)) \cdot \frac{\textrm{d$\gamma$}}{\textrm{dt}}(t)
\end{align*}
Using the linear approximation $\gamma_{L}(t) = u +t\cdot e$ of $\gamma(t)$
one arrives at the 
\begin{thm}
	Let $f:U\subset E\rightarrow F$ be of $C^{1}$ differentiable
	at $u$, then the \emph{directional differential} exists at $u$ and
	reads
	\begin{align*}
		\frac{\textrm{d}}{\textrm{dt}}(f(u+t\cdot e))\vert_{t=0}
		&=
		\bfDk f(u)\cdot e = \bfDk_{e} f(u).
	\end{align*}
\end{thm}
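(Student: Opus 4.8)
The plan is to specialize the chain-rule computation displayed just above the statement to the affine path $\gamma(t) = u + t\cdot e$, so that the theorem falls out as an immediate corollary of the Composite mapping Theorem~\ref{thm:compositemapping}. First I would record the two elementary properties of this $\gamma : \RR \to E$: it satisfies $\gamma(0) = u$, and, being affine, it has constant differential, so that $\bfDk\gamma(t)\cdot 1 = e$ for every $t$ and in particular $\frac{\textrm{d}\gamma}{\textrm{dt}}(0) = e$. Because $U$ is open, the point $u + t\cdot e$ lies in $U$ for all sufficiently small $t$, so the composite $f\circ\gamma$ is defined on a neighbourhood of $0$ in $\RR$.

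Next I would invoke Theorem~\ref{thm:compositemapping}: since $\gamma$ is smooth and $f$ is of class $C^{1}$ at $u$, the curve $f\circ\gamma$ is $C^{1}$ near $0$. This already establishes the \emph{existence} half of the claim, since the directional differential is by definition the derivative of the $C^{1}$ curve $t\mapsto f(u + t\cdot e)$ evaluated at $t=0$. Applying the chain rule in the form $\frac{\textrm{d}}{\textrm{dt}}\big(f(\gamma(t))\big) = \bfDk f(\gamma(t))\cdot \frac{\textrm{d}\gamma}{\textrm{dt}}(t)$ and substituting the two facts $\gamma(0)=u$ and $\frac{\textrm{d}\gamma}{\textrm{dt}}(0)=e$ then yields
\[
\frac{\textrm{d}}{\textrm{dt}}\big(f(u + t\cdot e)\big)\big|_{t=0} \;=\; \bfDk f(u)\cdot e ,
\]
which is exactly $\bfDk_{e} f(u)$ by definition.

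There is no genuine obstacle here: all the analytic content is already carried by the chain rule, and the theorem is a one-line substitution once the affine velocity $\frac{\textrm{d}\gamma}{\textrm{dt}}(0)=e$ has been recorded. The only point that deserves a moment's attention is the existence assertion, where one must check that $f\circ\gamma$ is genuinely differentiable at $0$ (two-sidedly) rather than merely one-sidedly; this is ensured by the openness of $U$ together with the $C^{1}$-differentiability of $f$ at $u$ supplied by the hypothesis, so that $\gamma$ may be followed in both directions from $u$.
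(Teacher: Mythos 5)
Your proposal is correct and follows essentially the same route as the paper: the paper derives the theorem by applying the chain rule from the Composite mapping Theorem to a composed curve $f\circ\gamma$ and then substituting the linear approximation $\gamma_{L}(t)=u+t\cdot e$, which is exactly your affine-path argument. Your additional remarks on openness of $U$ and the two-sided existence of the derivative of $f\circ\gamma$ at $0$ are sensible points of care that the paper leaves implicit.
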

If $f : \RR^{n}\rightarrow \RR$ with coordinate (basis) functions $x^{i}$ and
let $e=x^{1}e_{1}+\ldots + x^{n}e_{n}$ (usually normalized) expressed
in the standard basis $\{e_{i}\}$, then the directional differential (in
one or many directions $v_{i}$) reads
\begin{align*}
	\bfDk f(u)\cdot e = \bfDk_{e} f(u)
	&=
	\frac{\partial f}{\partial x^{1}}x^{1} + \ldots +
	\frac{\partial f}{\partial x^{n}}x^{n}
	\\
	\bfDk^{\mkern1mu k}_{v_{1},\ldots,v_{k}} f(u)
	&=
	\frac{\partial^{k}}{\partial t_{1}\cdots \partial t_{k}}
	f(u+t_{1} v_{1}+\ldots+ t_{k} v_{k})\vert_{t_{i}=0}
\end{align*}
As we compare linearizations, we can replace the open interval $I$
by $\RR$ if the reparameterization is done in such a way that base points
and tangents are preserved.

\end{appendix}
\end{document}
\eof